\documentclass{amsart}

\usepackage[latin1]{inputenc}
\usepackage{amsfonts}
\usepackage{amsmath}
\usepackage{amsthm}
\usepackage{amssymb}
\usepackage{latexsym}
\usepackage{enumerate}

\newtheorem{theorem}{Theorem}[section]
\newtheorem{lemma}[theorem]{Lemma}
\newtheorem{proposition}[theorem]{Proposition}
\newtheorem{corollary}[theorem]{Corollary}

\theoremstyle{definition}
\newtheorem{definition}[theorem]{Definition}

\numberwithin{equation}{section}

\begin{document}

\newcommand{\cc}{\mathfrak{c}}
\newcommand{\N}{\mathbb{N}}
\newcommand{\PP}{\mathbb{P}}
\newcommand{\QQ}{\mathbb{Q}}
\newcommand{\forces}{\Vdash}
\newcommand{\R}{\mathbb{R}}
\newcommand{\LL}{\mathbb{L}}
\newcommand{\MM}{\mathbb{M}}

\title{On constructions with $2$-cardinals}
\author{Piotr Koszmider}
\address{Institute of Mathematics, Polish Academy of Sciences,
ul. \'Sniadeckich 8,  00-956 Warszawa, Poland}
\email{\texttt{piotr.koszmider@impan.pl}}
\thanks{The  author was partially supported by the National Science Center research grant 
2011/01/B/ST1/00657. }

%
\subjclass{}
%
%
%
\begin{abstract}
We propose developing the theory of consequences of  morasses
relevant in mathematical applications in the language  alternative to the usual one, replacing
commonly used structures by families of sets originating with Velleman's neat simplified morasses
called $2$-cardinals.
The theory of related trees, gaps, colorings of pairs and forcing notions is reformulated
and sketched from a unifying point of view with the focus on the applicability to constructions of 
mathematical  structures like Boolean algebras, Banach spaces or compact spaces.

The paper is dedicated to the memory of Jim Baumgartner whose
seminal joint paper
 \cite{bs} with Saharon Shelah  provided a critical mass in the theory in question.  

A new  result which we obtain as a side product is the consistency  
of the existence of a function $f:[\lambda^{++}]^2\rightarrow[\lambda^{++}]^{\leq\lambda}$
with the appropriate $\lambda^+$-version of  property $\Delta$
for regular $\lambda\geq\omega$ satisfying $\lambda^{<\lambda}=\lambda$.
\end{abstract}

\maketitle
{}

\markright{}

\section{Introduction}

The notation used is fairly standard\footnote{In particular $|A|$
stands for the cardinality of $A$, $f[A]$ denotes 
the image of $A$ under $f$, $f\restriction A$
denotes the restriction of 
$f$ to $A$. $A\subset B$ means the strict inclusion i.e., $A\not=B$ in that
case.  If $A,B$ are sets of ordinals, then $ordtp(A)$
denotes the order type of $A$ and we write $A<B$ if and only 
if $\alpha<\beta$ for all $\alpha\in A$ and $\beta\in B$.
$ht$ and $rank$ denotes height and rank in well founded families of
sets with respect to the inclusion. $\alpha^{<\beta}$ denotes 
the family of all sequences of elements
from $\alpha$ of length less then $\beta$. If $\kappa$
and $\lambda$ are cardinals, then  $\wp_\kappa(\lambda)=\{
X\subseteq\lambda:\ |X|<\kappa\}$.}, for unexplained symbols
and notions see \cite{kunen} or \cite{jech}. If $\mu\subseteq \wp_\kappa(\lambda)$
and $X\subseteq \lambda$, then
$\mu| X=\{Y\in\mu:\ Y\subset X\}$. If $X$ and $Y$ are sets of ordinals
of the same order type, then $f_{YX}$ denotes the unique order preserving bijection
from $X$ onto $Y$.
By 
$$X_1*X_2$$
we mean $X_1\cup X_2$ if $X_1, X_2$ are two sets of ordinals of the same order type
and $X_1\cap X_2<X_1\setminus X_2<X_2\setminus X_1$. Otherwise $X_1*X_2$ is undefined.
\vskip 13pt

\begin{definition}[\cite{vellemandiam}, \cite{vellemanjsl}]\label{morass} Let $\kappa$ be a 
regular cardinal. A  $(\kappa,\kappa^+)$-cardinal\footnote{Formally,  in the original terminology of 
\cite{vellemanjsl} and \cite{vellemandiam} a  $(\kappa,\kappa^+)$-cardinal
is a neat simplified $(\kappa, 1)$-morass, however in many following papers e.g., \cite{morganforcing},
\cite{koepke}, \cite{irrgang}  a $(\kappa, 1)$-morass is what formally Velleman called
an expanded neat simplified morass.
This shift towards the expanded version (already present in the above papers of Velleman)
is justified  by the fact that the above authors do all the calculations with the expanded versions 
i.e.,   use maps rather than sets.}
is a family $\mu\subseteq\wp_\kappa(\kappa^+)$ which satisfies the following
conditions:
\begin{enumerate}
\item  $\mu$ is well-founded with respect to inclusion,

\item $\mu$ is {\sl locally small} i.e. $ |(\mu| X) |
<\kappa $ for all $X\in \mu$,

\item $\mu$ is {\sl homogenous} i.e. if $X,Y\in\mu,\ rank(X)=rank(Y) 
$, then $X,Y$ have the same order type and
$\mu| Y=\{f_{YX}[Z]:\ Z\in\mu| X\}$,

\item $\mu$ is {\sl directed} i.e.,  for every $X,Y\in\mu$ there exists $Z\in\mu$
such that $X,Y\subseteq Z$,
\item $\mu$ is {\sl locally almost directed}, i.e., for every $X\in\mu$ either
\begin{enumerate}[(a)]
\item  $\mu| X$ is directed or
\item  there are $X_1,X_2\in\mu$ of the same rank such that  
$$X=X_1*X_2 \ \hbox{and}\  \mu| X=
(\mu| X_1)\cup(\mu| X_2)\cup\{X_1,X_2\}$$
 \end{enumerate}
\item $\mu$ {\sl covers} $\kappa^+$ i.e., $\bigcup\mu=\kappa^+$.
\item $\mu$ is {\sl neat},  that is  for every element $X$ of $\mu$ of nonzero rank we have
$$X=\bigcup(\mu|X).$$ 
\end{enumerate}
\end{definition}
\vskip 13pt

The terminology proposed here is a suggested consequence of the main point of the paper which is that the above representation
of $(\kappa,1)$-morass allows to shift the language of the theory (proofs, lemmas, theorems) into
a language compatible with the  part of set theory applicable in classical mathematical
fields  (forcing, partitions, transfinite recursion
rather than the spirit of the fine structure of $L$, inner models etc.). 
 The allusion in the terminology is that the above representation
is as liberating, compared to the usual morass language, as von Neumann's ordinals compared to
Cantor's theory of well-orders and embeddings among them. 
 We propose here rewriting all the standard calculations
in the language of Definition \ref{morass} and claim that we obtain 
a quite transparent and usable theory when it starts living its own life without the reference to
the old body of arguments.

At first sight one can doubt if it matters to talk about some sets in $\kappa^+$ of cardinalities
less than $\kappa$, their intersections and unions,
rather than mappings from ordinals less than $\kappa$ into $\kappa^+$ and appropriate compositions.
We feel that, however, the
 degree of the challenge of building this complex theory in a language that carries unnecessary information
may be well expressed in the word ``simplified morass"  and that there is a substantial progress
if one moves to families of sets and tries to settle all milestones and references in  a new language
which is more compatible with the language of places where it is needed:  forcing with
models as side conditions, constructions of classical mathematical structures or partitions.
For example, these simplifications in many cases provide explicit definitions of the required
objects instead of recursive ones, even in highly complex cases as the Hausdorff gaps
or colorings of pairs similar to the $\rho$-function.

In any case, we hope  that this text could serve as a relatively painless introduction to
applications of morasses,  as its diverse circulated unpublished versions functioned 
this way in the last two decades under the name {\it Etude in simplified morasses}.

The notation $(\kappa, \kappa^+)$-cardinal suggest the possibility of
using different pairs of cardinals or longer sequences of them. Indeed one could
consider a $(\kappa,\lambda)$-semimorasses of \cite{semi} as a $(\kappa,\lambda)$-cardinals.
In the case of $\lambda>\kappa^+$, as in \cite{semi} one needs to change the definition
of $X_1*X_2$, replacing the condition $X_1\cap X_2<X_2\setminus X_2<X_2\setminus X_1$
by $f_{X_2X_1}\restriction X_1\cap X_2=Id_{X_1\cap X_2}$.

The structure of the paper is the following.
In Section 2 we develop elementary properties of $2$-cardinals as
families of sets.
In Section 3 we focus on recursive constructions along $2$-cardinals
in the analogy to the usual transfinite recursion along ordinals. 
In fact, the heart of the philosophy of $(\kappa,\kappa^+)$-cardinals is to view $\kappa^+$
as built from fragments of sizes less than $\kappa$ so that a recursive construction
of a structure of size $\kappa^+$ does not have to deal with the case of an
intermediate construction having size $\kappa$.
As the  main example  we propose  a direct construction of a $\kappa$-thin tall
Boolean algebra due to Koepke and Martinez \cite{koepke}, instead of using the
 morass version of Martin's axiom developed by Velleman in \cite{vellemandiam}
and \cite{vellemanjsl}. In a sense the claim of this section is that we can do very well
without this version of Martin's axiom, if we represent appropriately the intermediate structures.

Certainly the proof of the equivalence of this version of Martin's axiom and the existence of
simplified morasses played historically a very important  role. However, looking backwards,
in practice it seems that either one can
do a transfinite recursion along a $(\kappa,\kappa^+)$-cardinal with a nicely represented structures
or there is a need of additional ad hoc properties, which may be reduced to
increasingly complex versions of morasses with built-in diamond as in \cite{vellemandiam} and then
the actual forcing approach turns out to be more economic at least in applications
(e.g. \cite{isaac}, \cite{init}, \cite{big}, \cite{grande}), where what matters most is the consistency and not
necessarily holding in the constructible universe.

Thus, Velleman's version of Martin's axiom equivalent to a morass is not discussed nor proposed
as a convenient tool here.
We refer the interested reader to \cite{vellemandiam}
and \cite{vellemanjsl}. Perhaps the initial idea of formulating morasses like in the Definition \ref{morass}
was motivated by this version of Martin's axiom. As it turned out not to be often used (the same fate
was met by another attempt in this direction \cite{shelahstanley}), the language
shifted in the direction of expanded simplified morasses.
On the other hand, one 
should remember that proving that a theorem follows 
from the existence of a morass or a $2$-cardinal means that an inaccessible
cardinal is necessary to obtain the consistency
of the negation of the theorem. This cannot be said about
consistency proofs which use the method of forcing.\par

In Section 4 we give canonical definitions of several classical objects from
a $(\kappa,\kappa^+)$-cardinal, we propose a couple of types of Kurepa trees and 
generalizations of Hausdorff gaps. The simplicity of these definitions and the
proofs of the properties,  especially in the context of  the importance of theses structures, shows
 that the language of $(\kappa,\kappa^+)$-cardinal indeed clears the working environment.
In this section we also  prove that a stationary $(\kappa,\kappa^+)$-cardinal is
a stationary subset of
$\wp_\kappa(\kappa^+)$ which does not reflect to $\wp_\kappa(A)$
for any proper subset $A\subset \kappa^+$. We also review
some relevant literature concerning the combinatorial phenomena displayed by
the above objects, which often is referred to as  noncompactness, nonreflections or gaps.

In Section 5 we develop a theory of $\rho$-function type coloring which
can be canonically defined from a $(\kappa,\kappa^+)$-cardinal.
It was C. Morgan  who first saw such a possibility  in \cite{morganforcing}.
Our approach allows to obtain a function with $\Delta$-property for
$\kappa>\omega_1$ generalizing previous results.

In section 6 we review possible applications of
$2$-cardinals for building forcing notions.

Finally in Section 7 we mention attempts of transforming higher gap morasses
into a tool manageable in applications.

The morasses were introduced by
R. Jensen (see \cite{devlin2}). It is beyond the scope of this paper
to give a historical review of their profound impact.
Using the results of Velleman (\cite{vellemanjsl}, \cite{vellemandiam}) we can conclude
from Jensen's theory that in $L$ there exists
a $(\kappa,\kappa^+)$-cardinal for every regular  uncountable $\kappa$ and if
there is no $(\kappa,\kappa^+)$-cardinal for such a $\kappa$, then it
is inaccessible in $L$. $2$-cardinals can also be easily added by a nice forcing like in
\cite{semi}. In the case of $\kappa=\omega$ Velleman's morasses exist in ZFC (\cite{vellemanzfc})
and do not have classical counterpart in Jensen's theory. In Section 4 we give an explicit
definition of a Hausdorff gap from such a 2-cardinal.
In the language
of \cite{stevoacta}, a $(\kappa,\kappa^+)$-cardinal can be considered a stepping-up tool, it 
enables us to step-up  properties of $\kappa$,
obtained by the usual induction, to $\kappa^+$, since 
the initial fragments of the constructions are 
of sizes less than $\kappa$.
In the above sense every  well-founded directed
set of size $\kappa^+$
with initial fragments of sizes less than $\kappa$ is
a stepping-up tool. Additional strength and the essence of
a $2$-cardinal as well as other nontrivial stepping-up 
frameworks is hidden in  coherence properties
of the framework.\par

We will focus on the possibilities of using the language of Definition \ref{morass}
and so we have to omit most of the  comments on the complicated network of results
concerning the consistency strengths of the combinatorial principles which appear in this paper as well
as the comments on the relations of the constructions to the fine structure of $L$.

\vskip 26pt
\section{Elementary properties}

\begin{lemma}[The coherence lemma (2.4. \cite{vellemanzfc})]\label{coherence}
 Let $\kappa$ be a regular cardinal and  $\mu $ be a 
$(\kappa,\kappa^+) $-cardinal. Let $X,Y\in\mu$ be of the same rank and let
$\alpha\in X\cap Y$, then $$X\cap\alpha=Y\cap\alpha.$$
\end{lemma}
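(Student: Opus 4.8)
The plan is to run an induction driven by the structural axioms (4)--(5) rather than to argue directly. Given $X,Y$ of equal rank and $\alpha\in X\cap Y$, I would use directedness to choose a common container $Z\in\mu$ with $X,Y\subseteq Z$ of \emph{minimal} rank, and induct on $rank(Z)$. If $X=Z$ or $Y=Z$, then one of the two sets contains the other while sharing its rank, so well-foundedness forces $X=Y$ and there is nothing to prove; hence I may assume $X,Y\in\mu|Z$. The only axioms I expect to need are homogeneity (3), the definition of $X_1*X_2$, well-foundedness, and the minimality of $Z$; neatness should play no role.

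The directed case (5a) is dispatched by minimality: if $\mu|Z$ is directed and $X,Y\in\mu|Z$, then directedness yields $Z'\in\mu|Z$ with $X,Y\subseteq Z'$, and $Z'\subset Z$ gives $rank(Z')<rank(Z)$, contradicting the minimal choice of $Z$. So this case reduces to the triviality above.

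The heart is the split case (5b): $Z=Z_1*Z_2$ with $rank(Z_1)=rank(Z_2)$ and $\mu|Z=(\mu|Z_1)\cup(\mu|Z_2)\cup\{Z_1,Z_2\}$. First I would locate $X$ and $Y$: if both were contained in $Z_1$ (or both in $Z_2$), that half would be a common container of strictly smaller rank, contradicting minimality; so, after interchanging $X$ and $Y$ if necessary, $X\subseteq Z_1$ and $Y\subseteq Z_2$. Since $\alpha\in X\cap Y\subseteq Z_1\cap Z_2$ and, by the definition of $*$, the set $Z_1\cap Z_2$ is a common initial segment of both halves lying strictly below $Z_1\setminus Z_2$ and $Z_2\setminus Z_1$, I would isolate two facts: (i) every element of $X$ or of $Y$ that is below $\alpha$ already lies in $Z_1\cap Z_2$; and (ii) the order isomorphism $f_{Z_1Z_2}\colon Z_2\to Z_1$ restricts to the identity on $Z_1\cap Z_2$, because an order preserving bijection between two sets of ordinals sharing an initial segment must fix that segment pointwise.

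Now $W:=f_{Z_1Z_2}[Y]$ lies in $\mu|Z_1$ by homogeneity, and since the correspondence of (3) is an isomorphism of the inclusion orders it preserves rank, so $rank(W)=rank(Y)=rank(X)$; by (ii) also $\alpha=f_{Z_1Z_2}(\alpha)\in W$. If $rank(X)=rank(Z_1)$ then $X=W=Z_1$ and $X\cap\alpha=W\cap\alpha$ trivially; otherwise $X,W\in\mu|Z_1$ admit the common container $Z_1$ of rank $<rank(Z)$, so the induction hypothesis gives $X\cap\alpha=W\cap\alpha$. Finally, using (i) and (ii), the map $f_{Z_1Z_2}$ fixes everything below $\alpha$, so passing between $Y$ and $W=f_{Z_1Z_2}[Y]$ changes nothing below $\alpha$, i.e. $Y\cap\alpha=W\cap\alpha$; combining the two equalities yields $X\cap\alpha=Y\cap\alpha$. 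The main obstacle is precisely this transfer step: one must check that the homogeneity isomorphism is trivial on the relevant initial segment, so that the coherence problem for $X,Y$ \emph{across} the two halves of $Z$ genuinely collapses to a coherence problem for $X,W$ \emph{inside} the single half $Z_1$, where the container rank has dropped and the induction applies.
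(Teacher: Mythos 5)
Your proof is correct and follows essentially the same route as the paper's: induction on the rank of a common container $Z$, with the split case $Z=Z_1*Z_2$ handled by pushing one of the two sets across the homogeneity bijection, which fixes $Z_1\cap Z_2$ (hence everything below $\alpha$) pointwise, and then applying the inductive hypothesis inside a single half. The only differences are cosmetic: you insist on a minimal-rank container and dispatch the directed case (5a) by contradiction with minimality where the paper invokes the inductive hypothesis directly, you map $Y$ into $Z_1$ rather than $X$ into $Z_2$, and you spell out the degenerate cases ($X=Z$ or $Y=Z$, and $rank(X)=rank(Z_1)$) that the paper leaves implicit.
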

\begin{proof}
By induction on $rank(Z)$ such that $X,Y\subseteq Z\in\mu$ which exists by the
directedness \ref{morass} (4).
If (5a) of \ref{morass} holds for $\mu|Z$, then 
we are immediately done by the inductive hypothesis.

If (5b) of \ref{morass} holds, we have $Z=Z_1*Z_2$, and say $X\subseteq Z_1,\
Y\subseteq Z_2$, (otherwise we are done by inductive hypothesis).
By $Z=Z_1*Z_2$ we have that $f_{Z_2 Z_1}\restriction (Z_1\cap Z_2)= Id_{Z_1\cap Z_2}$
and $Z_1\cap\alpha'=Z_2\cap\alpha'$ for any $\alpha'\in Z_1\cap Z_2$
in particular for $\alpha\in Z_1\cap Z_2$,
since $\alpha\in X\cap Y$. By the homogeneity \ref{morass} (3)
$rank(f_{Z_2 Z_1}[X])=rank(X)=rank(Y)$.
We know also that $\alpha\in f_{Z_2 Z_1}[X]$, since
 $f_{Z_2 Z_1}\restriction Z_1\cap Z_2$ is the identity.
Now, by inductive hypothesis for $Z_2$, we obtain that 
$$f_{Z_2 Z_1}[X]\cap\alpha=Y\cap\alpha,$$
 but again 
since $f_{Z_2 Z_1}\restriction Z_1\cap Z_2=Id_{Z_1\cap Z_2}$, we have
$f_{Z_2 Z_1}[X]\cap\alpha=X\cap\alpha$, so $Y\cap\alpha=
X\cap\alpha$ as required. 
\end{proof}

Using the coherence lemma we can conclude the lemma below even in the case
when $X*Y\not\in \mu$.

\begin{lemma}\label{corcoherence}
Suppose that $\kappa$ is a regular cardinal and $\mu$ is a 
$(\kappa, \kappa^+)$-cardinal.  Let $X$ and $Y$ be elements of
$\mu$ of the same rank, then $f_{XY}\restriction (X\cap Y)=Id_{X\cap Y}$.
\end{lemma}

\begin{lemma}[The density lemma (2.7. \cite{vellemanjsl})]\label{density}
 Suppose that $\kappa$ is a regular cardinal and $\mu$ is a 
$(\kappa, \kappa^+)$-cardinal.  Then the following conditions are
satisfied:
\begin{enumerate}

\item If $X\in\mu$
$$\{rank(Z): Z\in\mu, \ X\subseteq Z \}=[rank(X), ht(\mu)).$$

\item If $X\subseteq Y$ are two elements of $\mu$, then
$$\{rank(Z): Z\in\mu, \ X\subseteq Z \subseteq Y\}=[rank(X), rank(Y)].$$

\end{enumerate}
\end{lemma}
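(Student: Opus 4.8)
The plan is to prove the two parts of the density lemma together, exploiting the local structure given by condition (5) of Definition \ref{morass} to perform an induction on rank.

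\medskip

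First I would establish part (1) by proving that the set of ranks of elements $Z\in\mu$ containing a fixed $X$ forms an interval $[rank(X),ht(\mu))$. That $rank(X)$ is the minimum is immediate, and that the values never reach $ht(\mu)$ follows from the definition of height. The substance is the \emph{no-gaps} claim: if some $Z\supseteq X$ has $rank(Z)=\gamma$ and $rank(X)<\delta<\gamma$, then there is an intermediate $W$ with $X\subseteq W\subseteq Z$ and $rank(W)=\delta$. I would prove this by induction on $\gamma$. The key is to descend one rank at a time: given $Z\supseteq X$ of rank $\gamma>rank(X)$, I want to find $W'\in\mu|Z$ (hence of rank $<\gamma$) with $X\subseteq W'$, and then recurse. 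I would split on the two cases of local almost-directedness (5). In case (5a), $\mu|Z$ is directed, so by neatness $Z=\bigcup(\mu|Z)$, and since $X\subset Z$ has size $<\kappa$ while $\mu|Z$ is directed and locally small, $X$ is contained in a single member of $\mu|Z$ of rank as close to $\gamma$ as desired. In case (5b), $Z=Z_1*Z_2$ and $\mu|Z=(\mu|Z_1)\cup(\mu|Z_2)\cup\{Z_1,Z_2\}$, so $X$ must lie inside one of $Z_1,Z_2$ (using the coherence Lemma \ref{coherence} to rule out $X$ straddling the split), reducing to the directed case one rank down.

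\medskip

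For part (2), the upper endpoint $rank(Y)$ is realized by $Y$ itself and the lower by $X$; the content is again that every intermediate rank is attained by some $Z$ with $X\subseteq Z\subseteq Y$. I would derive this from the mechanism of part (1), but now carrying the extra upper constraint $Z\subseteq Y$. Running the same rank-descent inside $\mu|Y$ rather than inside all of $\mu$, and observing that the witnesses produced are automatically $\subseteq Y$ because they are drawn from $\mu|Y$, yields every rank strictly between $rank(X)$ and $rank(Y)$; together with the two endpoints this gives the full closed interval $[rank(X),rank(Y)]$.

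\medskip

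The main obstacle I anticipate is the bookkeeping in case (5b): when $Z=Z_1*Z_2$ one must argue that $X$ does not genuinely meet both $Z_1\setminus Z_2$ and $Z_2\setminus Z_1$. Here the coherence Lemma \ref{coherence} is essential, since it forces the two pieces to agree below any common point, so that $X\subseteq Z$ small and $Z$ split forces $X\subseteq Z_1$ or $X\subseteq Z_2$. The other delicate point is verifying, in the directed case, that local smallness together with $|X|<\kappa$ and the regularity of $\kappa$ lets one capture all of $X$ inside a single element of each intermediate rank rather than merely inside a directed union; this is exactly where the locally small hypothesis \ref{morass}(2) does the work, preventing $X$ from being spread across cofinally many members of $\mu|Z$.
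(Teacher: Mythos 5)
Your proposal is correct in its overall structure, but it is organized genuinely differently from the paper's proof, in a way worth recording. The paper proves (1) first, by taking $Y\supseteq X$ of minimal rank with $rank(Y)\geq\alpha$ and deriving a contradiction from $rank(Y)>\alpha$ via the case split (5a)/(5b) --- essentially your descent, phrased as a minimal-counterexample argument. For (2), however, the paper does something you avoid entirely: it applies (1) twice to get $X\subseteq Z_1\subseteq Z_2$ with $rank(Z_1)=\alpha$ and $rank(Z_2)=rank(Y)$, then transports $Z_1$ into $\mu|Y$ by the homogeneity map $f_{YZ_2}$, using the coherence lemma to check that $f_{YZ_2}$ fixes $X$ pointwise. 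Your single relative ``no-gaps'' claim --- every rank in $[rank(X),rank(Z)]$ is realized by some $W$ with $X\subseteq W\subseteq Z$ --- \emph{is} statement (2), proved directly by induction on $rank(Z)$, and (1) then follows from it together with the cofinality observation below; so you invert the paper's logical order and dispense with homogeneity and coherence altogether. That is a real simplification: the descent through (5a)/(5b) automatically stays inside $Z$, so the upper constraint costs nothing.

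Two of your justifications need repair, though both repairs are one-liners. First, in the directed case (5a) you propose to ``capture all of $X$ inside a single element'' using local smallness, $|X|<\kappa$ and regularity of $\kappa$. That mechanism is not valid: directedness only bounds finite subfamilies, and a directed family of size $<\kappa$ need not have upper bounds for its infinite subfamilies (think of an increasing $\omega$-chain whose union is $Z$ itself). It is also unnecessary: since $X\in\mu$ and $X\subset Z$ strictly, $X$ is itself an element of $\mu|Z$; pair it, by one application of directedness of $\mu|Z$, with any $V\in\mu|Z$ of rank $\geq\delta$ (such $V$ exists by the definition of rank in a well-founded family), obtaining $W'\in\mu|Z$ with $X\subseteq W'$ and $rank(W')\geq\delta$, and recurse on $W'$. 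Second, in case (5b) no appeal to the coherence lemma is needed to rule out $X$ ``straddling'' the split: condition (5b) states $\mu|Z=(\mu|Z_1)\cup(\mu|Z_2)\cup\{Z_1,Z_2\}$, and since $X\in\mu|Z$ this immediately places $X$ inside $Z_1$ or $Z_2$. Finally, for (1) you should state explicitly (as the paper does) that the ranks of supersets of $X$ are cofinal in $ht(\mu)$: given $\alpha<ht(\mu)$, pick $V\in\mu$ of rank $\geq\alpha$ and use directedness of $\mu$ to find $Z\supseteq X,V$.
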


\begin{proof} To prove (1) fix $X\in\mu$
take $rank(X)\leq\alpha<ht(\mu)$ and take $Y\in\mu$ of minimal rank such that $X\subseteq Y$ and
$rank(Y)\geq\alpha$. There is such a $Y$ by the directedness of $\mu$ \ref{morass} (4).

 If $rank(Y)=\alpha$ we are done. We will prove that 
 $rank(Y)>\alpha$ gives rise to a contradiction. We 
 apply the local almost directedness \ref{morass} (5) of $\mu$  to $Y$.
The minimality of the rank of $Y$ implies that $\mu|Y$ cannot be directed, so
 there are $Y_1, Y_2$ such that
$Y=Y_1*Y_2$, then $rank(Y)=rank(Y_i)+1>\alpha$, so
 $rank(Y_i)\geq\alpha$ and $X\in\mu\restriction Y_1$ 
 or $X\in\mu\restriction Y_2$. This 
 contradicts the minimality of the rank of $Y$
 and completes the proof of part (1).\par
 \noindent For (2) fix  $X,Y\in\mu$ and $\alpha<ht(\mu)$ such that $X
 \subseteq Y$ and  $rank(X)<\alpha<rank(Y)$.
 Using the part (1), find $Z_1,Z_2\in \mu$ such that
 $X\subseteq Z_1\subseteq Z_2$ and
 $rank(Z_1)=\alpha$ and $rank(Z_2)=rank(Y)$. Consider
 $f_{Y Z_2}$, we get
 $f_{Y Z_2}[X]\subseteq f_{Y Z_2}[Z_1]$ and 
 $f_{Y Z_2}[Z_1]\in\mu$ and $rank(f_{Y Z_2}[Z_1])=\alpha$.
 It is enough to prove that $f_{Y Z_2}[X]=X$, i.e.,
 $f_{Y Z_2}(\alpha)=\alpha$ for all $\alpha$'s
 in $X$. But as $X\subseteq Z_2, Y$, if $\alpha\in X$ and $\alpha\in Z_2\cap Y$,
 the coherence lemma \ref{coherence} implies that $ordtp(\alpha\cap Z_2)=
 ordtp(\alpha\cap Y)$.  As $f_{Y Z_2}$ is order
 preserving, we get that $f_{Y Z_2}(\alpha)=\alpha$. 
\end{proof}

\begin{lemma}[The localization lemma]\label{localization}
Suppose that $\kappa$ is a regular cardinal and $\mu$ is a 
$(\kappa, \kappa^+)$-cardinal. 
Suppose that $F\subseteq \kappa^+$ is a finite set such that there
is $X\in\mu$ with $rank(X)=\eta$ and $F\subseteq X$. If $Y\in \mu$ contains $F$ and is
of $rank(Y)\geq\eta$, then there is $X'\in (\mu|Y)\cup\{Y\}$ such that
$$F\subseteq X'\ \hbox{and}\  rank(X')=\eta.$$
In particular $X\cap \max(F)=X'\cap \max(F)\subseteq Y$.
\end{lemma}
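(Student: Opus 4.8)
The plan is to fix the finite set $F$ and argue by induction on $rank(Y)$, ranging over all $Y\in\mu$ with $F\subseteq Y$ and $rank(Y)\geq\eta$; the hypothesis that some $X\in\mu$ of rank $\eta$ contains $F$ stays fixed and is available throughout. (We may assume $F\neq\emptyset$, the case $F=\emptyset$ being a direct consequence of the density lemma \ref{density}.) If $rank(Y)=\eta$ we put $X'=Y$, so we may suppose $rank(Y)>\eta$ and apply the local almost directedness \ref{morass}(5) to $Y$. In case \ref{morass}(5a), where $\mu|Y$ is directed, neatness \ref{morass}(7) gives $Y=\bigcup(\mu|Y)$ (the rank is nonzero), so by finiteness of $F$ and directedness there is a single $W\in\mu|Y$ with $F\subseteq W$ and $rank(W)<rank(Y)$. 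If $rank(W)\geq\eta$ the inductive hypothesis applied to $W$ delivers $X'\in(\mu|W)\cup\{W\}\subseteq\mu|Y$; if $rank(W)<\eta$ I would instead invoke the density lemma \ref{density}(2) to interpolate $X'$ with $W\subseteq X'\subseteq Y$ and $rank(X')=\eta$, so that $F\subseteq W\subseteq X'$.

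The interesting case is \ref{morass}(5b), where $Y=Y_1*Y_2$ with $rank(Y_1)=rank(Y_2)=rank(Y)-1\geq\eta$ and $\mu|Y=(\mu|Y_1)\cup(\mu|Y_2)\cup\{Y_1,Y_2\}$. Here the whole step reduces to proving that $F\subseteq Y_1$ or $F\subseteq Y_2$: once this is known, the inductive hypothesis applied to the relevant $Y_i$ (of rank $\geq\eta$ and strictly below $rank(Y)$) finishes the induction, since $(\mu|Y_i)\cup\{Y_i\}\subseteq\mu|Y$. I expect this dichotomy to be the main obstacle, and it is precisely the point at which the existence of the rank-$\eta$ set $X$ and the coherence lemma are needed — without the hypothesis on $X$ no element of rank $\eta$ inside $Y$ need contain an $F$ straddling the split $Y_1*Y_2$.

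To establish the dichotomy I would argue by contradiction: assume $F\not\subseteq Y_1$ and $F\not\subseteq Y_2$. Since $F\subseteq Y=Y_1\cup Y_2$ and, by the definition of $*$, $Y_1\cap Y_2<Y_1\setminus Y_2<Y_2\setminus Y_1$, there are $a\in F\cap(Y_1\setminus Y_2)$ and $b\in F\cap(Y_2\setminus Y_1)$ with $a<b$. Using the density lemma \ref{density}(1), lift $X$ (which has $rank(X)=\eta\leq rank(Y_1)$ and $F\subseteq X$) to some $X^+\in\mu$ with $X\subseteq X^+$ and $rank(X^+)=rank(Y_1)=rank(Y_2)$, so that $F\subseteq X^+$. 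Now $b\in X^+\cap Y_2$ and $rank(X^+)=rank(Y_2)$, so the coherence lemma \ref{coherence} gives $X^+\cap b=Y_2\cap b$; but $a\in F\subseteq X^+$ and $a<b$ force $a\in X^+\cap b=Y_2\cap b\subseteq Y_2$, contradicting $a\in Y_1\setminus Y_2$. This yields $F\subseteq Y_1$ or $F\subseteq Y_2$ and completes the induction.

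For the final assertion, set $m=\max(F)$. The $X'$ produced above satisfies $rank(X')=\eta=rank(X)$ and $m\in F\subseteq X\cap X'$, so the coherence lemma \ref{coherence} gives $X\cap m=X'\cap m$, that is $X\cap\max(F)=X'\cap\max(F)$; and since $X'\in(\mu|Y)\cup\{Y\}$ we get $X'\cap\max(F)\subseteq X'\subseteq Y$, as required.
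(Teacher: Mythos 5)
Your proof is correct, but it takes a genuinely different route from the paper's. The paper's argument is direct and very short, with no induction and no case analysis: given $rank(Y)>\eta$, the density lemma \ref{density} provides $Y'\in\mu$ with $X\subset Y'$ and $rank(Y')=rank(Y)$; homogeneity \ref{morass}(3) then says that $X'=f_{YY'}[X]$ belongs to $\mu|Y$; and since $\max(F)\in Y\cap Y'$, the coherence lemma \ref{coherence} gives $Y\cap\max(F)=Y'\cap\max(F)$, so the order-preserving bijection $f_{YY'}$ is the identity on this common initial segment and at $\max(F)$ itself, whence $F\subseteq X'$ and $X\cap\max(F)=X'\cap\max(F)$ all at once. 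In other words, homogeneity hands over the required $X'$ explicitly as an image of $X$. You instead run a well-founded induction on $rank(Y)$, splitting by local almost directedness \ref{morass}(5) into the directed case (handled via neatness, directedness of $\mu|Y$, and interpolation by \ref{density}(2)) and the case $Y=Y_1*Y_2$, which you reduce to the dichotomy $F\subseteq Y_1$ or $F\subseteq Y_2$; your proof of that dichotomy --- lifting $X$ by \ref{density}(1) to the rank of $Y_2$ and applying coherence at $b=\min(F\setminus Y_1)$ --- is correct and isolates nicely exactly where the hypothesis on $X$ is indispensable. What each approach buys: yours never invokes homogeneity except through the already-proved coherence and density lemmas, and it exposes the combinatorics of the split $Y_1*Y_2$; the paper's is a three-line computation whose brevity is precisely the point being advertised (homogeneity plus coherence replace all the bookkeeping --- your subcase $rank(W)<\eta$, the empty-$F$ remark, and the rank arithmetic $rank(Y_i)=rank(Y)-1$ simply never arise).
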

\begin{proof}
If $rank(Y)=\eta$, then $X'=Y$ works. So we may assume that $rank(Y)>\eta.$
By the density lemma \ref{density} there is $Y'\in \mu$ such that
$rank(Y')=rank(Y)$ and $X\subset Y'$. Now use the homogeneity
 \ref{morass} (3) of $\mu$  to note that $X'=f_{YY'}[X]\in \mu|Y$.
By the coherence lemma $Y\cap \max(F)=Y'\cap\max(F)$, so
$f_{YY'}$ as an order preserving map is the identity on $Y\cap \max(F)=Y'\cap\max(F)$,
in particular $F\subseteq X'$ and $X\cap \max(F)=X'\cap\max(F)$.
\end{proof}

\begin{lemma}\label{zerorank}
Let $\kappa$ be a regular cardinal and $\mu$ be a $(\kappa, \kappa^+)$-cardinal.
Every element $\alpha\in\kappa^+$ is in some $X\in\mu$ of rank zero.
\end{lemma}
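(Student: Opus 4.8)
The plan is to take an element of $\mu$ containing $\alpha$ of least possible rank and to use neatness to force that rank down to zero. First I would invoke the covering property \ref{morass} (6), which gives $\bigcup\mu=\kappa^+$, to obtain some $X\in\mu$ with $\alpha\in X$. Since $\mu$ is well-founded with respect to inclusion by \ref{morass} (1), the ranks of its elements are well-defined ordinals, so $\{rank(Y):Y\in\mu,\ \alpha\in Y\}$ is a nonempty set of ordinals and has a least element. Fix $X\in\mu$ with $\alpha\in X$ realizing this minimum.

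Next I would show that $rank(X)=0$. Suppose toward a contradiction that $rank(X)>0$. Then neatness \ref{morass} (7) applies to $X$ and yields $X=\bigcup(\mu|X)$. In particular $\alpha\in Z$ for some $Z\in\mu|X$, that is, $Z\in\mu$ with $Z\subset X$ and $\alpha\in Z$. Because rank is computed with respect to inclusion, the strict inclusion $Z\subset X$ forces $rank(Z)<rank(X)$, contradicting the minimality of $rank(X)$ among elements of $\mu$ containing $\alpha$. Hence $rank(X)=0$ and $X$ is the desired element.

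The only point requiring care is the existence of a rank-minimal element, and this is precisely where well-foundedness \ref{morass} (1) enters; it is what makes a single application of neatness suffice. No appeal to directedness, homogeneity, or the density lemma \ref{density} is needed: the argument is a direct descent down the inclusion order, which must terminate at a rank-zero witness exactly because neatness lets one peel off a proper subelement still containing $\alpha$ whenever the rank is positive. I do not expect any genuine obstacle here; the subtlety, if any, is merely to remember that neatness is only guaranteed for elements of nonzero rank, which is exactly the case in the contradiction hypothesis.
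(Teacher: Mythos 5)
Your proof is correct and is essentially the paper's own argument: the paper likewise takes $X\in\mu$ of minimal rank containing $\alpha$ (existing by covering and well-foundedness) and observes that neatness forces that rank to be zero. You have merely spelled out the contradiction step --- that neatness would produce $Z\subset X$ with $\alpha\in Z$ and hence $rank(Z)<rank(X)$ --- which the paper leaves implicit.
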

\begin{proof}
Let $X$ be of minimal rank such that $\alpha\in X$, which exists by (6) of \ref{morass}.
By the neatness (7) of \ref{morass} the rank of $X$ must be zero.
\end{proof}

\begin{definition}\label{musequencedef} 
Let $\kappa$ be a regular cardinal and $\mu$ be a $(\kappa, \kappa^+)$-cardinal.
 Let $\alpha\in\kappa^+$. The sequence 
$(\mu_\xi(\alpha))_{\xi<ht(\mu)}$ is called the $\mu$-sequence at  $\alpha$ if
and only if  for all $\xi<ht(\mu)$ we have
$$\mu_\xi(\alpha)=X_\xi\cap\alpha,$$
 where $X_\xi\in\mu$ is such that $rank(X_\xi)=\xi,\ 
\alpha\in X_\xi,\  $.
\end{definition}

The fact that $\mu$-sequences are well-defined follows from the coherence lemma \ref{coherence},
the density lemma \ref{density} and Lemma \ref{zerorank}.

\begin{lemma}\label{musequencecoherence}
Suppose that $\kappa$ is a regular cardinal and $\mu$ is a $(\kappa, \kappa^+)$-cardinal.
Suppose $(\mu_\xi(\alpha))_{\xi<ht(\mu)}$ is a $\mu$-sequence at $\alpha$ and
$\beta\in \mu_\xi(\alpha)$. Then 
$$\mu_\xi(\beta)=\mu_\xi(\alpha)\cap\beta.$$ 
\end{lemma}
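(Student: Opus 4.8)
The plan is to observe that the single element $X_\xi$ witnessing $\mu_\xi(\alpha)$ can be reused as a witness for $\mu_\xi(\beta)$, so that no genuinely new object needs to be produced. First I would unpack the hypothesis: by Definition \ref{musequencedef} we have $\mu_\xi(\alpha)=X_\xi\cap\alpha$ for some $X_\xi\in\mu$ with $rank(X_\xi)=\xi$ and $\alpha\in X_\xi$. The assumption $\beta\in\mu_\xi(\alpha)$ then says precisely that $\beta\in X_\xi$ and $\beta<\alpha$. In particular $X_\xi$ is an element of $\mu$ of rank $\xi$ containing $\beta$, so it is a legitimate candidate to compute the $\mu$-sequence at $\beta$ at level $\xi$.

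Next I would invoke the well-definedness of $\mu$-sequences remarked after Definition \ref{musequencedef} (which itself rests on the coherence lemma \ref{coherence}): the value $\mu_\xi(\beta)$ is independent of the particular rank-$\xi$ element of $\mu$ chosen to contain $\beta$. Applying this with the witness $X_\xi$ gives
$$\mu_\xi(\beta)=X_\xi\cap\beta.$$
Since $\beta<\alpha$, intersecting by $\beta$ absorbs the cut at $\alpha$, that is $X_\xi\cap\beta=(X_\xi\cap\alpha)\cap\beta=\mu_\xi(\alpha)\cap\beta$, which is exactly the desired identity.

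If one prefers not to appeal to well-definedness as a black box, the same conclusion follows by applying the coherence lemma \ref{coherence} directly. Let $Y_\xi\in\mu$ with $rank(Y_\xi)=\xi$ and $\beta\in Y_\xi$ be any element realizing $\mu_\xi(\beta)=Y_\xi\cap\beta$. Then $X_\xi$ and $Y_\xi$ have the same rank $\xi$ and $\beta\in X_\xi\cap Y_\xi$, so the coherence lemma yields $X_\xi\cap\beta=Y_\xi\cap\beta$, and the computation concludes as before.

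I do not expect any real obstacle here: the entire statement is essentially a restatement of the coherence of same-rank elements below a common point. The only thing to be careful about is the bookkeeping of the two cuts, namely noticing that $\beta<\alpha$ makes $\mu_\xi(\alpha)\cap\beta$ collapse to $X_\xi\cap\beta$ so that the shared witness argument applies cleanly.
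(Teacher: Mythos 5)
Your proof is correct and follows exactly the paper's route: the paper's entire proof is ``This is just the coherence lemma \ref{coherence},'' and your argument is precisely the unpacking of that remark — the witness $X_\xi$ for $\mu_\xi(\alpha)$ contains $\beta$, so by coherence (equivalently, well-definedness of $\mu$-sequences) it also computes $\mu_\xi(\beta)$, and the cut at $\beta$ absorbs the cut at $\alpha$.
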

\begin{proof}
This is just the coherence lemma \ref{coherence}.
\end{proof}

In  the following lemma we note, among others, that the height of
$\mu$ is $\kappa$ and so the length of
the $\mu$-sequences is $\kappa$, thus they will be denoted
$(\mu_\xi(\alpha))_{\xi<\kappa}$.

\begin{lemma}\label{musequencelemma} Let $\kappa$ be a regular cardinal and $\mu$ be a $(\kappa,\kappa^+)$-cardinal and
let $\alpha\in\kappa^+$.  Then the $\mu$-sequence $(\mu_\xi(\alpha))_{\xi<ht(\mu)}$ 
at $\alpha$ is a continuous non-decreasing sequence 
such that the union of its terms is equal to $\alpha$. 
In particular it is a club subset of $[\alpha]^{<\kappa}$ and so the  height  $ht(\mu)$ of the well-founded set $(\mu, \subseteq)$ is $\kappa$.
\end{lemma}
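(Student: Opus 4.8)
The plan is to verify in turn that the sequence $(\mu_\xi(\alpha))_{\xi<ht(\mu)}$ is non-decreasing, that its union is $\alpha$, and that it is continuous; the club property and the equality $ht(\mu)=\kappa$ will then follow. To avoid circularity I would prove these three structural facts first (none of them presupposes the value of $ht(\mu)$), deduce $ht(\mu)=\kappa$ from them, and only afterwards read off that the range of the sequence is a club in $[\alpha]^{<\kappa}$.

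For monotonicity, fix $\xi\le\eta<ht(\mu)$ and pick $X_\xi,X_\eta\in\mu$ of ranks $\xi,\eta$ containing $\alpha$, so $\mu_\xi(\alpha)=X_\xi\cap\alpha$ and $\mu_\eta(\alpha)=X_\eta\cap\alpha$. The density lemma \ref{density}(1) yields $Z\in\mu$ with $X_\xi\subseteq Z$ and $rank(Z)=\eta$; since $\alpha\in X_\xi\subseteq Z$, the coherence lemma \ref{coherence} applied to $Z,X_\eta$ gives $Z\cap\alpha=X_\eta\cap\alpha$, and $X_\xi\subseteq Z$ then forces $\mu_\xi(\alpha)\subseteq\mu_\eta(\alpha)$. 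For the union, $\mu_\xi(\alpha)\subseteq\alpha$ is clear; conversely, given $\beta<\alpha$, I would use Lemma \ref{zerorank} to place $\alpha$ and $\beta$ in rank-zero members of $\mu$ and the directedness \ref{morass}(4) to find $Z\in\mu$ with $\alpha,\beta\in Z$, whence $\beta\in Z\cap\alpha=\mu_{rank(Z)}(\alpha)$. Both arguments are routine once coherence and density are in place.

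The crux is continuity. For a limit $\lambda<ht(\mu)$ I must show $\mu_\lambda(\alpha)=\bigcup_{\xi<\lambda}\mu_\xi(\alpha)$; the inclusion $\supseteq$ is monotonicity, so I fix $\beta\in X_\lambda\cap\alpha$ and seek $\xi<\lambda$ with $\beta\in\mu_\xi(\alpha)$. Here I would exploit the interplay of neatness and local almost directedness: by neatness \ref{morass}(7) both $\beta$ and $\alpha$ lie in members of $\mu| X_\lambda$, and since $rank(X_\lambda)=\lambda$ is a limit, the splitting alternative \ref{morass}(5b) cannot hold, because it would make $rank(X_\lambda)$ a successor $rank(X_i)+1$, exactly as observed in the proof of the density lemma \ref{density}. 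Thus $\mu| X_\lambda$ is directed, which produces a single $Z\in\mu| X_\lambda$ with $\alpha,\beta\in Z$ and $rank(Z)<\lambda$, giving $\beta\in Z\cap\alpha=\mu_{rank(Z)}(\alpha)$ as required. Recognizing that a limit rank rules out the $*$-splitting, thereby reducing matters to honest directedness strictly below $X_\lambda$, is the main obstacle.

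It remains to compute $ht(\mu)=\kappa$, for which I would work with a fixed $\alpha$ satisfying $|\alpha|=\kappa$ (e.g. $\alpha=\kappa$). For the lower bound, if $ht(\mu)<\kappa$ then $\alpha=\bigcup_{\xi<ht(\mu)}\mu_\xi(\alpha)$ would be a union of fewer than $\kappa$ sets each of size less than $\kappa$, hence of size less than $\kappa$ by regularity, contradicting $|\alpha|=\kappa$. For the upper bound I claim every $X\in\mu$ has $rank(X)<\kappa$ (the case $rank(X)=0$ being trivial): choosing the minimal-rank member $Y$ of $\mu| X$, which must have rank $0$ by neatness, and applying \ref{density}(2) to $Y\subseteq X$ shows that the ranks in $[0,rank(X)]$ are each realized by a distinct member of $(\mu| X)\cup\{X\}$, a set of size less than $\kappa$ by local smallness \ref{morass}(2); hence $rank(X)<\kappa$. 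Therefore $ht(\mu)\le\kappa$, and with the lower bound $ht(\mu)=\kappa$. Finally, knowing the length is $\kappa$, the range $\{\mu_\xi(\alpha):\xi<\kappa\}$ is cofinal in $[\alpha]^{<\kappa}$ (every $A\in[\alpha]^{<\kappa}$ is absorbed into $\mu_{\xi^*}(\alpha)$ for $\xi^*=\sup_{a\in A}\xi_a<\kappa$, by regularity) and closed under directed unions of size less than $\kappa$ (by continuity and monotonicity such a union is again one of the terms), so it is a club, completing the proof.
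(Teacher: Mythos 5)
Your proof is correct and takes essentially the same route as the paper's: coherence plus the density lemma for monotonicity, directedness and covering for the union, neatness together with the failure of the splitting alternative (5b) at limit ranks for continuity, and a rank count against local smallness for $ht(\mu)=\kappa$. The only differences are that you spell out the continuity and height-bound arguments that the paper dispatches in one sentence each, and you order the steps so that $ht(\mu)=\kappa$ is never used before it is established --- both reasonable refinements of the same argument.
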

\begin{proof}

Let $\xi<\xi'<ht(\mu)$.  By
the density lemma \ref{density} there is $Z\in\mu$ of rank $\xi'$
such that $\mu_\xi(\alpha)\cup\{\alpha\}\subseteq Z$ and  then the coherence lemma \ref{coherence}
implies that $Z\cap\alpha=\mu_{\xi'}(\alpha)$, so $\mu_\xi(\alpha)\subseteq \mu_{\xi'}(\alpha)$.
The directedness and covering of $\kappa^+$ imply that the union 
is equal to $\alpha$. The neatness and the directedness \ref{morass} imply the continuity. 

To prove that   $(\mu_\xi(\alpha))_{\xi<ht(\mu)}$ is unbounded  
in  $[\alpha]^{<\kappa}$ pick any $X=\{\alpha_\eta: \eta<\theta\}\in [\alpha]^{<\kappa}$
for some $\theta<\kappa$. By the first part of the lemma for each $\eta<\theta$
there is $\xi_\eta<\kappa$ such that $\alpha_\eta\in \mu_{\xi_\eta}(\alpha)$.
By the regularity of $\kappa$ there is $\xi<\kappa$ such that $\xi_\eta<\xi$ 
for each $\eta<\theta$. Using the monotonicity of the $\mu$-sequence from the first part
of the proof we conclude that $X\subseteq \mu_\xi(\alpha)$ as required.

To evaluate the height of $\mu$ note that since all elements of $\mu$ are of cardinalities less than
 $\kappa$,  $\kappa$ is regular and the $\mu$-sequence  at $\kappa$
 covers $\kappa$, there must be at least $\kappa$ ranks, that is
$ht(\mu)\geq \kappa$. Since $\mu$ is locally small (\ref{morass} (2)) we have that $ht(\mu)\leq \kappa$.

\end{proof}

Thus for every $\alpha\in \kappa^+$ the $\mu$-sequence at $\alpha$ provides a decomposition
of $\alpha$ as a nondecreasing continuous chain in type $\kappa$ which covers $\alpha$.
Moreover by \ref{musequencecoherence} these chains for different $\alpha$s cohere.

\begin{lemma}\label{musequencecofcof}
 Let $\kappa$ be a regular cardinal and 
 $\mu$ be a  $(\kappa,\kappa^+)$-cardinal. Let $\alpha\in\kappa^+$
and $\delta$ be a limit ordinal.
If $X\in \mu$ is of rank less than $\delta$, then there is $\delta'<\delta$ such that
$$X\cap\mu_{\delta}(\alpha)\subseteq \mu_{\delta'}(\alpha).$$
\end{lemma}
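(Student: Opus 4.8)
The plan is to reduce the statement to the existence of a single small set capturing both $\alpha$ and the intersection. Write $\mu_\delta(\alpha)=X_\delta\cap\alpha$ with $rank(X_\delta)=\delta$ and $\alpha\in X_\delta$, and put $A=X\cap\mu_\delta(\alpha)=X\cap X_\delta\cap\alpha$. It suffices to produce $W\in\mu$ with $\alpha\in W$, $A\subseteq W$ and $rank(W)<\delta$: for then $\delta'=rank(W)$ works, since $\mu_{\delta'}(\alpha)=W\cap\alpha\supseteq A$ by the very definition of the $\mu$-sequence (Definition \ref{musequencedef}) together with the coherence lemma \ref{coherence}, which guarantees that $\mu_{\delta'}(\alpha)$ may be computed from any rank-$\delta'$ member of $\mu$ containing $\alpha$. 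I would first record two consequences of $\delta$ being a limit ordinal. By Lemma \ref{musequencelemma} the $\mu$-sequence is continuous, so $\mu_\delta(\alpha)=\bigcup_{\xi<\delta}\mu_\xi(\alpha)$ and hence $A=\bigcup_{\xi<\delta}\bigl(X\cap\mu_\xi(\alpha)\bigr)$. Moreover, since clause (5)(b) of Definition \ref{morass} forces $rank(X_\delta)$ to be a successor, the limitness of $\delta$ excludes it, so by clause (5)(a) the family $\mu|X_\delta$ is directed.

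Next I would analyse the \emph{entry stages}. For $\beta\in\kappa^+$ set $\xi(\beta)=\min\{rank(Z):Z\in\mu,\ \alpha,\beta\in Z\}$; as in Lemma \ref{musequencelemma} one checks that $\beta\in\mu_\eta(\alpha)$ iff $\xi(\beta)\le\eta$ at successor stages and, by continuity, $\beta\in\mu_\delta(\alpha)$ iff $\xi(\beta)<\delta$. Thus $A=\{\beta\in X\cap\alpha:\xi(\beta)<\delta\}$, and the lemma is equivalent to $\sup\{\xi(\beta):\beta\in A\}<\delta$. The localization lemma \ref{localization} shows that $X\cap\mu_\xi(\alpha)$ is an \emph{initial segment} of $X\cap\alpha$ for every $\xi\ge rank(X)$: if $\beta'\in X\cap\mu_\xi(\alpha)$ and $\beta<\beta'$ lies in $X$, then localizing $\{\beta'\}$ inside a rank-$\xi$ set containing $\alpha$ yields a rank-$rank(X)$ set agreeing with $X$ below $\beta'$, whence $\beta\in\mu_\xi(\alpha)$. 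Consequently $\beta\mapsto\xi(\beta)$ is non-decreasing on $X\cap\alpha$ and $A$ is an initial segment of $X\cap\alpha$. In the easy case that $A$ has a largest element $m$, applying the same localization to $\{m\}\subseteq X$ inside $X_\delta$ produces a single rank-$rank(X)$ set $X'\subseteq X_\delta$ with $A\subseteq X'$; combining $X'$ with a member of $\mu|X_\delta$ containing $\alpha$ (which exists by neatness \ref{morass}(7)) via the directedness of $\mu|X_\delta$ yields the desired $W$.

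The remaining, and genuinely delicate, case is when $A$ has no largest element, so that $\sigma=\sup A$ is a limit with $\sigma\notin A$; since every element of $X$ lying in $\mu_\delta(\alpha)$ belongs to $A$, there is then no member of the block $X$ inside $\mu_\delta(\alpha)$ at or above $\sigma$, which is the main obstacle. Here I would argue with a $\subseteq$-minimal $W^*\in\mu$ satisfying $\alpha\in W^*\supseteq A$ (one exists since $X_\delta$ qualifies) and aim to show $rank(W^*)<\delta$, arguing by contradiction from $rank(W^*)=\delta$. Applying local almost directedness \ref{morass}(5) to $W^*$ splits into two subcases. If $\mu|W^*$ is directed, I would contradict minimality by exploiting that $A$ lies in the single rank-$rank(X)$ block $X$, so that $X\cap W^*$ is an initial segment of $X$ containing $A$, and then using directedness and neatness to fit $A\cup\{\alpha\}$ into a proper subset of $W^*$. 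If instead $W^*=W_1*W_2$, the order structure of the $*$-operation, confronted with $A\subseteq X$ and $\alpha>\sup A$, should again contradict minimality. The crux is to show that a single block $X$ of rank $<\delta$ cannot push the capturing rank up to the limit $\delta$; I expect this to be secured by applying the coherence lemma \ref{coherence} to a common refinement $\hat X\supseteq X$ of rank $\delta$ (obtained from a common superset of $X$ and $X_\delta$ via the density lemma \ref{density}), for which $\hat X\cap\sigma=X_\delta\cap\sigma$, thereby transporting $A$ into a single rank-$rank(X)$ set that can be merged with $\alpha$ below $\delta$.
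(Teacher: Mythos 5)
Your reduction is the right one (produce $W\in\mu$ with $A\cup\{\alpha\}\subseteq W$ and $rank(W)<\delta$, where $A=X\cap\mu_\delta(\alpha)$), and your ``easy case'' is handled correctly: when $A$ has a largest element $m$, the localization lemma \ref{localization} applied to $\{m\}$ inside a rank-$\delta$ set $X_\delta\ni\alpha$ captures $A$ in a single member of $\mu|X_\delta$ of rank $rank(X)$, and directedness of $\mu|X_\delta$ (clause (5b) of \ref{morass} is impossible at limit rank) together with neatness finishes. But the case you yourself call delicate --- $A$ without a largest element --- is exactly where the lemma lives, and there your argument is not a proof: it is a sketch hedged with ``should'' and ``I expect'', and its concrete steps do not hold as stated. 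In the subcase where $\mu|W^*$ is directed, knowing that $X\cap W^*$ is an initial segment of $X$ containing $A$ does not produce a member of $\mu|W^*$ containing $A$: directedness only bounds finitely many members of $\mu|W^*$ at a time, and neatness only puts each single ordinal of $A$ into some member, so the possibly infinite set $A$ cannot be swallowed this way. Likewise the coherence claim $\hat X\cap\sigma=X_\delta\cap\sigma$ is unjustified: the coherence lemma \ref{coherence} requires a common element of $\hat X\cap X_\delta$ at or above $\sigma$, which you have not produced ($\sigma=\sup A$ need not belong to either set).

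The missing step is precisely the homogeneity transport that you gesture at in your final sentence but never execute --- and once executed, it makes your entire apparatus (entry-stage function, minimal $W^*$, case split on whether $A$ has a maximum, subcases (5a)/(5b)) unnecessary. Take $Y\in\mu$ of rank $\delta$ with $\alpha\in Y$, and by the density lemma \ref{density} take $Z\in\mu$ of rank $\delta$ with $X\subseteq Z$. By homogeneity \ref{morass}(3), $X'=f_{YZ}[X]\in\mu|Y$. Since $A\subseteq X\cap Y\cap Z$ and $f_{YZ}$ is the identity on $Y\cap Z$ by Lemma \ref{corcoherence}, we get $A=f_{YZ}[A]\subseteq X'$. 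Now directedness of $\mu|Y$ (again because $\delta$ is a limit) and neatness give $X''\in\mu|Y$ of rank $\delta'<\delta$ with $X'\cup\{\alpha\}\subseteq X''$, whence $A\subseteq X''\cap\alpha=\mu_{\delta'}(\alpha)$. This is the paper's proof; it is uniform in $A$ and requires no distinction between your two cases.
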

\begin{proof}
Let  $Y\in\mu$ be of rank $\delta$ such that
$\alpha\in Y$. Using the density lemma \ref{density} find $Z\in\mu$ of rank $\delta$ such that
$X\subseteq Z$.

Consider $X'=f_{YZ}[X]\in \mu|Y$. As $X\cap\mu_{\delta}(\alpha)\subseteq X, Y, Z$, by
\ref{corcoherence} we have that $X\cap\mu_{\delta}(\alpha)\subseteq X'$. By the 
almost directedness and by the neatness
there is $X''\in\mu|Y$ of rank $\delta'<\delta$ such that $X'\cup\{\alpha\}\subseteq X''$. So
$$X\cap\mu_{\delta}(\alpha)\subseteq X''\cap\alpha=\mu_{\delta'}(\alpha),$$
as required.
\end{proof}

\begin{proposition}\label{cofinality} Let $\kappa$ be a regular cardinal and $\mu$ be a $(\kappa,\kappa^+)$-cardinal.
Then $\mu$ is
cofinal in $([\kappa^+]^{<\kappa},\subseteq)$.
\end{proposition}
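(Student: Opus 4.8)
The plan is to reduce the cofinality of $\mu$ in $([\kappa^+]^{<\kappa},\subseteq)$ to the club property of a single $\mu$-sequence, which is already established in Lemma \ref{musequencelemma}. Fix an arbitrary $A\in[\kappa^+]^{<\kappa}$; I may assume $A\neq\emptyset$. Since $|A|<\kappa\leq\kappa^+$ and $\kappa^+$ is regular, the supremum of $A$ lies below $\kappa^+$, so I can choose $\beta\in\kappa^+$ with $A\subseteq\beta$, for instance $\beta=\sup(A)+1$. Thus $A\in[\beta]^{<\kappa}$.

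Next I would pass to the $\mu$-sequence $(\mu_\xi(\beta))_{\xi<\kappa}$ at $\beta$. By Lemma \ref{musequencelemma} this sequence is a club subset of $[\beta]^{<\kappa}$, and in particular it is unbounded, so there is some $\xi<\kappa$ with $A\subseteq\mu_\xi(\beta)$. By Definition \ref{musequencedef} we have $\mu_\xi(\beta)=X_\xi\cap\beta$ for some $X_\xi\in\mu$ with $rank(X_\xi)=\xi$ and $\beta\in X_\xi$. Hence $A\subseteq\mu_\xi(\beta)\subseteq X_\xi\in\mu$, and since every element of $\mu$ belongs to $\wp_\kappa(\kappa^+)=[\kappa^+]^{<\kappa}$, the element $X_\xi$ witnesses cofinality at $A$.

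There is no genuine obstacle here beyond locating the right earlier result: the entire combinatorial weight rests on the unboundedness half of Lemma \ref{musequencelemma}, whose proof already exploits the regularity of $\kappa$ to absorb the $<\kappa$ many witnessing ranks into a single $\xi<\kappa$. The one step requiring a moment's care is the reduction—bounding $A$ below a single ordinal $\beta<\kappa^+$ using the regularity of $\kappa^+$—since it is precisely what allows me to apply the club property to one fixed $\mu$-sequence, rather than having to iterate the directedness \ref{morass} (4) through $<\kappa$ many stages and then argue that the accumulated union still lands inside an element of $\mu$.
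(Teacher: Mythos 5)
Your proof is correct and follows essentially the same route as the paper: both bound the given set by an ordinal $\beta<\kappa^+$ and then invoke the unboundedness half of Lemma \ref{musequencelemma} for the $\mu$-sequence at $\beta$ to capture the set inside some $\mu_\xi(\beta)\subseteq X_\xi\in\mu$. The only cosmetic difference is that the paper picks $\beta$ of cofinality $\kappa$ (not actually needed) and leaves the final inclusion $\mu_\xi(\beta)\subseteq X_\xi$ implicit, which you spell out.
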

\begin{proof}
Let $X\in[\kappa^+]^{<\kappa}$, choose $\alpha\in\kappa^+$ 
of cofinality $\kappa$ such
that $\sup(X)<\alpha$, 
consider the $\mu$-sequence at $\alpha$. By Lemma \ref{musequencelemma} 
there is an element of it which 
 includes $X$. 
\end{proof}

\noindent In particular a $(\kappa,\kappa^+)$-cardinal $\mu$  is a 
cofinal   family  in $\wp_\kappa
(\kappa^+)$ which is the union of at most $\kappa$ many
subfamilies ${\mu}_\alpha$ for $\alpha<\kappa$ 
(i.e, ${\mu}_\alpha$ consists of elements of 
rank $\alpha$)
 such that for every
two $X, X'\in {\mu}_\alpha$ such that
$\sup(X)\leq \sup(X')$ we have $X\cap X'<X\setminus X'<X'\setminus X$ 
by the coherence lemma \ref{coherence}.\par
Note that the families ${\mu}_\alpha$ cannot 
be $\Delta$-systems, i.e, have the property that 
there is $\Delta_\alpha\in\wp_\kappa(\kappa^+)$ such that
for each $X, X'\in {\mu}_\alpha$ we have 
$\Delta_\alpha=X\cap X'$. Note also that there is no
family ${\mu}\subseteq\wp_\kappa(\lambda)$, 
satisfying  definition \ref{morass} for $\lambda>\kappa^+$.
To see this, suppose that $\lambda>\kappa^+$ and 
consider the $\mu$-sequence at $\kappa^+$, as defined in definition
\ref{musequencedef}, by Lemma \ref{musequencelemma} it covers $\kappa^+$, but 
$\kappa^+$ is regular, so it cannot be covered
by this sequence. Now let us make some elementary observation concerning
the interaction of $2$-cardinals and elementary submodels.

\begin{lemma}\label{musequencemodel} Let $\kappa$ be a regular cardinal and $\mu$ be a 
 $(\kappa,\kappa^+)$-cardinal. Suppose that $M\prec H(\kappa^{++})$ is an elementary
submodel of cardinality less than $\kappa$ which contains $\mu$
and such that $\delta=M\cap\kappa\in\kappa$. Then 
\begin{enumerate}
\item For every $\alpha\in M\cap\kappa^+$
we have $$M\cap\alpha=\mu_\delta(\alpha).$$ 
\item If $M\cap \kappa^+\in \mu$, then $rank(M\cap\kappa^+)=\delta$.
\end{enumerate}
\end{lemma}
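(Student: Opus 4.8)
The plan is to exploit that every object in sight is definable in $H(\kappa^{++})$ from the single parameter $\mu\in M$, so that the Skolem hull $M$ "sees" the relevant initial segments. First two preliminary remarks: since $M$ is closed under ordinal successor and $\delta=M\cap\kappa$ is an ordinal, $\delta$ is a limit ordinal; and $\kappa\in M$ (being definable), so $\kappa\in M\cap\kappa^+$. For part (1) I would prove two inclusions. For $M\cap\alpha\subseteq\mu_\delta(\alpha)$: given $\beta\in M\cap\alpha$, Lemma \ref{musequencelemma} provides a least $\xi<\kappa$ with $\beta\in\mu_\xi(\alpha)$; this $\xi$ is definable from $\mu,\alpha,\beta\in M$, hence $\xi\in M\cap\kappa=\delta$, so $\xi<\delta$, and monotonicity of the $\mu$-sequence gives $\beta\in\mu_\xi(\alpha)\subseteq\mu_\delta(\alpha)$. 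For the reverse inclusion: as $\delta$ is a limit and the $\mu$-sequence is continuous (Lemma \ref{musequencelemma}), $\mu_\delta(\alpha)=\bigcup_{\xi<\delta}\mu_\xi(\alpha)$, so any $\beta\in\mu_\delta(\alpha)$ lies in some $\mu_\xi(\alpha)$ with $\xi<\delta$. Now $\mu_\xi(\alpha)\in M$ (definable from $\xi,\alpha,\mu\in M$), its order type is an ordinal $<\kappa$ and so lies in $M\cap\kappa=\delta$, and its increasing enumeration belongs to $M$; evaluating that enumeration at the ordinals below its length (all $<\delta$, hence in $M$) shows $\mu_\xi(\alpha)\subseteq M$, so $\beta\in M\cap\alpha$.

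For part (2) put $N=M\cap\kappa^+\in\mu$ and $\rho=rank(N)$, and let $\theta_\eta$ denote the common order type of the rank-$\eta$ elements of $\mu$, well defined by homogeneity (Definition \ref{morass}(3)). The key auxiliary fact I would first record is that $\eta\mapsto\theta_\eta$ is strictly increasing. Order type is nondecreasing along $\subseteq$, so by the density lemma \ref{density} it is nondecreasing in rank; and for a successor rank $\eta+1$ the directed alternative (5a) of Definition \ref{morass} is impossible, since a directed $\mu|X$ would have a maximum (its unique element of top rank) and then neatness would give $X=\bigcup(\mu|X)$ equal to that maximum, a proper subset of $X$. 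Hence (5b) applies: $X=X_1*X_2$ with $rank(X_i)=\eta$. As $X\neq X_1$, the set $X_2\setminus X_1$ is nonempty and, by the definition of $*$, lies entirely above $X_1$, so $\theta_{\eta+1}=\theta_\eta+ordtp(X_2\setminus X_1)>\theta_\eta$.

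With this in hand I bound $\rho$ from both sides. For $\rho\ge\delta$: given $\xi<\delta$, elementarity yields $Y\in M\cap\mu$ with $rank(Y)=\xi$; then $\theta_\xi=ordtp(Y)$ is an ordinal $<\kappa$ in $M\cap\kappa=\delta$, so the enumeration argument of part (1) gives $Y\subseteq M\cap\kappa^+=N$, whence $\xi=rank(Y)\le rank(N)=\rho$; since $\delta$ is a limit ordinal this yields $\rho\ge\delta$. For $\rho\le\delta$: enumerate $N$ increasingly, so each $\alpha\in N$ sits at position $ordtp(N\cap\alpha)$, and by part (1) $N\cap\alpha=M\cap\alpha=\mu_\delta(\alpha)$, a proper initial segment of a rank-$\delta$ element and hence of order type $<\theta_\delta$. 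As these positions exhaust $ordtp(N)=\theta_\rho$, we get $\theta_\rho\le\theta_\delta$, and strict monotonicity forces $\rho\le\delta$. Combining the bounds gives $rank(N)=\rho=\delta$.

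I expect the genuine obstacle to be the auxiliary strict monotonicity of $\eta\mapsto\theta_\eta$, as that is the only step forcing one to unpack local almost directedness (5) and neatness (7) of Definition \ref{morass}; once it is in place, both parts reduce to the routine bookkeeping that the Skolem hull recovers its own initial segments, all driven by the single equation $\delta=M\cap\kappa$.
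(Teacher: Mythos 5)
Your proof is correct. Part (1) is essentially the paper's own argument: the forward inclusion via definability in $M$ of the least $\xi$ with $\beta\in\mu_\xi(\alpha)$, and the reverse inclusion via continuity of the $\mu$-sequence at the limit ordinal $\delta=M\cap\kappa$; you merely spell out the enumeration argument (order type in $M\cap\kappa$, enumeration in $M$, domain inside $\delta\subseteq M$) that the paper compresses into ``as $\delta',\alpha,\mu\in M$, we get $\beta\in M$''.

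Part (2) is where you genuinely diverge. The paper rules out $rank(M\cap\kappa^+)<\delta$ by noting that homogeneity would then make $ordtp(M\cap\kappa^+)$ definable in $M$ from $\mu$ and the rank, which is impossible since $ordtp(M\cap\kappa^+)\geq\delta$ cannot lie in $M\cap\kappa=\delta$; and it rules out $rank(M\cap\kappa^+)>\delta$ by extracting from $\mu|(M\cap\kappa^+)$ an amalgamation pair $X=X_1*X_2$ of successor rank above $\delta$, producing $\alpha\in M$ with $\mu_\delta(\alpha)\neq\mu_{\delta'}(\alpha)\subseteq M\cap\alpha$, contradicting (1). You instead isolate a global structural fact -- the map $\eta\mapsto\theta_\eta$ from ranks to order types is strictly increasing, which forces you to show that alternative (5a) of Definition \ref{morass} fails at successor ranks via neatness -- and then both inequalities become soft counting: every rank $\xi<\delta$ is realized by some $Y\in M$ with $Y\subseteq M\cap\kappa^+$, giving $rank(M\cap\kappa^+)\geq\delta$, while part (1) bounds the order type of every proper initial segment of $M\cap\kappa^+$ by $\theta_\delta$, giving the reverse. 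The paper's route buys brevity (no auxiliary lemma; the $*$-decomposition is invoked only once, where needed); yours buys a reusable lemma about $2$-cardinals (strict monotonicity of order types in rank, equivalently that every successor-rank element of $\mu$ is an amalgamation $X_1*X_2$ of elements of the preceding rank), after which the submodel bookkeeping is routine -- note that both routes still lean on part (1) and homogeneity for the upper bound. Two steps you leave implicit do hold but deserve a line: in (5b) the common rank of $X_1,X_2$ must equal $\eta$ (otherwise $rank(X)\leq rank(X_1)+1<\eta+1$), and to see that $X_1$ lies entirely below $X_2\setminus X_1$ one must observe $X_1\setminus X_2\neq\emptyset$ (else equal order types force $X_1=X_2$ and $X=X_1$), since only then does transitivity yield $X_1\cap X_2<X_2\setminus X_1$.
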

\begin{proof} (1)
If $\beta\in M\cap \alpha$, then by the covering
and directedness \ref{morass} and by the elementarity there is $\xi\in M\cap\kappa=\delta$ such that
$\beta\in \mu_{\xi}(\alpha)$, so by the fact that $(\mu_\xi(\alpha))_{\xi<\kappa}$ is
nondecreasing we get that $\beta\in \mu_\delta(\alpha)$. 

As $\delta=M\cap\kappa\in\kappa$ must be a limit ordinal,
using the directedness and the neatness of $\mu$, if $\beta\in \mu_\delta(\alpha)$, then
there is $\delta'<\delta$ such that $\beta\in \mu_{\delta'}(\alpha)$.
So, as $\delta', \alpha, \mu\in M$, we get that $\beta\in M$.

(2) If $rank(M\cap\kappa^+)$ were less than $\delta$, 
by the elementarity, we would have $ordtp(M\cap\kappa^+)\in M$
which is impossible. If $\delta$ were less than $rank(M\cap\kappa^+)$,
there would exist $X\in \mu|(M\cap\kappa^+)$ of rank bigger than $\delta$,
say of successor rank $rank(M\cap\kappa^+)>\delta'>\delta$ of the form $X_1*X_2$
and so there would exist $\alpha\in M$ such that
$\mu_\delta(\alpha)\not=\mu_{\delta'}(\alpha)\subseteq M\cap \alpha$
contradicting (1).

\end{proof}

\begin{lemma} Let $\kappa$
be a regular cardinal  and
let $\mu$ be a 
 $(\kappa,\kappa^+)$-cardinal. Suppose that $M\prec H(\kappa^{++})$ is an elementary
submodel such that $M\cap\kappa^+$ has cardinality $\kappa$ and  contains $\mu$.
Then $M\cap\kappa$ is unbounded in $\kappa$.
In particular Chang's Conjecture fails at $\kappa$ 
\end{lemma}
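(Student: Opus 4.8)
The plan is to argue by contradiction, assuming $M\cap\kappa$ is bounded in $\kappa$; since $\kappa$ is regular this is precisely the configuration a Chang's Conjecture type transfer at $\kappa$ would supply, namely an $M\prec H(\kappa^{++})$ containing $\mu$ with $|M\cap\kappa^+|=\kappa$ but $|M\cap\kappa|<\kappa$ (the latter forcing $M\cap\kappa$ to be bounded), so refuting this boundedness simultaneously yields the failure of Chang's Conjecture asserted at the end. So fix $\delta=\sup(M\cap\kappa)<\kappa$ and aim for a contradiction.

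The first step is to show that $M\cap\alpha\subseteq\mu_\delta(\alpha)$ for every $\alpha\in M\cap\kappa^+$. Given $\beta\in M\cap\alpha$, Lemma \ref{musequencelemma} guarantees, in $H(\kappa^{++})$, some $\xi<\kappa$ with $\beta\in\mu_\xi(\alpha)$; since $\mu,\alpha,\beta,\kappa\in M$ and the $\mu$-sequence is definable from $\mu$ and $\alpha$, elementarity provides such a witness $\xi\in M\cap\kappa$. Then $\xi\le\delta$, and the monotonicity of the $\mu$-sequence (again Lemma \ref{musequencelemma}) gives $\mu_\xi(\alpha)\subseteq\mu_\delta(\alpha)$, whence $\beta\in\mu_\delta(\alpha)$. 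This is essentially the computation of Lemma \ref{musequencemodel}(1), now run with $\delta=\sup(M\cap\kappa)$ in place of the ordinal $M\cap\kappa$.

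The delicate point, and the step one must be careful about, is that the bound $|M\cap\alpha|<\kappa$ is not by itself incompatible with $|M\cap\kappa^+|=\kappa$: a set of order type exactly $\kappa$ has all its proper initial segments of size $<\kappa$. What breaks this is a \emph{uniform} order-type bound. By the homogeneity \ref{morass}(3) all elements of $\mu$ of rank $\delta$ share a single order type $\tau$, and since every element of $\mu$ lies in $\wp_\kappa(\kappa^+)$ and hence has cardinality $<\kappa$, we have $\tau<\kappa$. As $\mu_\delta(\alpha)$ is the intersection of such a rank-$\delta$ element with the ordinal $\alpha$, it follows that $ordtp(M\cap\alpha)\le ordtp(\mu_\delta(\alpha))\le\tau$ for every $\alpha\in M\cap\kappa^+$, with $\tau$ independent of $\alpha$.

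Finally I would count order types. Since $|M\cap\kappa^+|=\kappa$ we have $ordtp(M\cap\kappa^+)\ge\kappa>\tau+1$, so $M\cap\kappa^+$ has an element $\alpha^*$ occupying position $\tau+1$ in its increasing enumeration; then $(M\cap\kappa^+)\cap\alpha^*\subseteq M\cap\alpha^*$ has order type $\tau+1$, contradicting $ordtp(M\cap\alpha^*)\le\tau$. This contradiction shows $M\cap\kappa$ is unbounded, and reading this contrapositively gives the failure of Chang's Conjecture at $\kappa$.
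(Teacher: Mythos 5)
Your proof is correct and is essentially the paper's own argument run contrapositively: the paper argues directly that for each $\theta<\kappa$ there is a rank $\theta'\in M\cap\kappa$ above $\theta$, using exactly your three ingredients --- elementarity to place witnessing ranks of the $\mu$-sequence inside $M$, homogeneity \ref{morass}(3) to get a uniform order-type bound $\tau<\kappa$ on elements of a fixed rank, and the order-type count coming from $|M\cap\kappa^+|=\kappa$. Your extra care about $\sup(M\cap\kappa)$ possibly differing from $M\cap\kappa$ and about the position-$\tau+1$ element $\alpha^*$ just makes explicit what the paper's phrase ``find $\alpha\in M\cap\kappa^+$ such that the order type of $M\cap\alpha$ is bigger than the order type of elements of $\mu$ of rank $\theta$'' leaves implicit.
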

\begin{proof}
Let $\theta<\kappa$. Using the fact that $M\cap\kappa^+$ has cardinality $\kappa$
find $\alpha\in M\cap\kappa^+$ such that the order type of $M\cap\alpha$ is bigger than
the order type of elements of $\mu$ of rank $\theta$. This means that there is
$\xi\in M\cap\alpha$ such that $\xi\not\in \mu_\theta(\alpha)$. 
Hence some ordinal $\theta'<\kappa$ such that $\xi\in \mu_{\theta'}(\alpha)$ is definable in $M$ and
bigger than $\theta$ implying that $\theta$ is not a bound for $M\cap\kappa$ which completes
the proof.

\end{proof}

If $\mu$ is stationary in $\wp_\kappa(\kappa^+)$, then we have elementary submodels $M$ such that
$M\cap \kappa^+$ are in $\mu$, in this case (2) of \ref{musequencemodel} is not vacuous.
We may moreover
require that $\mu$ is
a stationary coding set (see \cite{zwicker}). By definition this means that
$\mu$ is stationary subset of $\wp_\kappa(\kappa^+)$
and that there is a one-to-one function $c:\mu\rightarrow\kappa^{+}$
such that
$$\forall X,Y\in \mu\ \ 
X\subset Y\ \Rightarrow c(X)\in Y.$$
\noindent The forcing proof of the existence of  neat morasses
which are stationary coding sets which is based on
a proof of Velleman from \cite{vellemanjsl} can be  obtained 
from the corresponding proof for semimorasses in \cite{semi} (Theorem 3, Section 2). 
\noindent Let us note two simple facts about stationary coding sets:\par
\vskip 13pt
\begin{proposition}Suppose that $\kappa$ is a regular cardinal and that a $(\kappa,\kappa^+)$-cardinal
$\mu\subseteq\wp_\kappa(\kappa^{+})$ is a stationary coding set
and $\mu\in M \prec H(\kappa^{++})$,
 $|M|<\kappa$, $M\cap \kappa^+\in \mu$.
If $X\in\mu$ and $X\subset M$, then $X\in M$.
\end{proposition}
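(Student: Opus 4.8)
The plan is to recover $X$ as an object definable inside $M$ from a single ordinal, exploiting the coding function attached to $\mu$. First note that, since $X\in\mu\subseteq\wp_\kappa(\kappa^+)$ is a set of ordinals below $\kappa^+$ and $X\subset M$, we have $X\subseteq M\cap\kappa^+$; and $M\cap\kappa^+\in\mu$ by hypothesis. The crucial preliminary step is to pull a coding function for $\mu$ into $M$. Since $\mu\in M$ and the assertion ``$\mu$ is a stationary coding set'' is a property of $\mu$ that holds in $H(\kappa^{++})$, elementarity of $M$ yields a one-to-one $c\in M$, $c\colon\mu\to\kappa^+$, such that $Z\subset W$ implies $c(Z)\in W$ for all $Z,W\in\mu$. (Such a $c$ belongs to $H(\kappa^{++})$, as $|\mu|\leq\kappa^+$, so the existential statement really does reflect down to $M$.)

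Granting $c\in M$, the argument is immediate in the principal case where $X\subset M\cap\kappa^+$ is a proper inclusion. Applying the coding inequality to the pair $X\subset M\cap\kappa^+$ of elements of $\mu$ gives $c(X)\in M\cap\kappa^+$, so in particular $\gamma:=c(X)\in M$. Because $c$ is one-to-one, $X$ is the unique $Z\in\mu$ satisfying $c(Z)=\gamma$; since $c$, $\mu$ and $\gamma$ all lie in $M$, this $Z$ is definable in $M$ from parameters in $M$, and therefore $X=Z\in M$, as required.

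I expect the genuine obstacle to be precisely this reflection step --- arranging that a coding function for $\mu$ lives in $M$ rather than merely in $V$ --- since once $c\in M$ the single ordinal $c(X)$ recovers $X$ with no further work. The remaining point to keep in mind is that the coding inequality needs the strict inclusion $X\subset M\cap\kappa^+$, which is the role played by the hypothesis $M\cap\kappa^+\in\mu$: it supplies the larger $\mu$-set into which $c(X)$ must fall. Here it is worth recording the standard fact that $M\cap\kappa^+\notin M$ (otherwise $\sup(M\cap\kappa^+)$ would be an element, hence the maximum, of $M\cap\kappa^+$, and its successor would again lie there, a contradiction); this clarifies why strict inclusion is the relevant hypothesis and confirms that the proper-inclusion case treated above carries the whole substance of the statement.
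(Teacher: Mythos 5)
Your proof is correct and follows essentially the same route as the paper's own: use elementarity to pull a coding function $c\in M$ out of the hypothesis ``$\mu$ is a stationary coding set,'' apply the coding inequality to the pair $X\subset M\cap\kappa^+$ of elements of $\mu$ to get $c(X)\in M\cap\kappa^{+}$, and then recover $X$ inside $M$ as the unique $c$-preimage of the ordinal $c(X)$. Your two added remarks --- that the witness $c$ really lies in $H(\kappa^{++})$ so the existential statement reflects, and that the degenerate case $X=M\cap\kappa^+$ must be excluded because $M\cap\kappa^+\notin M$ --- are points the paper passes over in silence, and the second one is a genuine (if minor) sharpening of the statement's intended reading.
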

\begin{proof} Suppose $X\in\mu$ and $X\subset M$.
As $\mu\in M \prec H({\kappa^{++}})$, we have that
$M$ {\it thinks} that $\mu$ is a stationary coding set,
so there is $c:\mu\rightarrow\kappa^+$ witnessing this
fact in $M$. In particular $\alpha=c(X)\in M\cap\kappa^{+}$,
so $X= c^{-1}(\alpha)$ is in $M$, as
required.
\end{proof}

\noindent The fact below is crucial in our method of forcing with side
conditions in morasses which we introduced in \cite{unbounded} and which
is outlined in the context of this paper in Section 6.

\begin{lemma}\label{stationarycoding}  Suppose that $\kappa$ is  a regular cardinal, a $(\kappa,\kappa^+)$-cardinal 
$\mu\subseteq\wp_\kappa(\kappa^+)$ is a stationary coding set
and $\mu\in M \prec H(\kappa^{++})$,
 $|M|<\kappa$, $M\cap \kappa^+=X_0\in \mu$.  Let
$Y\in \mu$, $rank(Y)<M\cap\kappa=\delta$. Then there is
$Z(Y)\in M\cap\mu$
 such that\par
1) $Y\cap X_0\subseteq Z(Y)$.\par
2) $rank(Z(Y))=rank(Y)$.\par
\end{lemma}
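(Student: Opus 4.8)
The plan is to reduce the whole statement, via the stationary coding property, to a purely internal search inside $\mu|X_0$. First I would record the two structural facts available for free: by Lemma \ref{musequencemodel}(2) we have $rank(X_0)=\delta$, and by the preceding proposition every $Z\in\mu$ with $Z\subset M$ lies in $M$. Since any $Z\in\mu|X_0$ satisfies $Z\subset X_0=M\cap\kappa^+\subseteq M$, we get the key inclusion $\mu|X_0\subseteq M$. Writing $\eta=rank(Y)$, note that any $Z\in\mu$ with $Z\subseteq X_0$ and $rank(Z)=\eta<\delta=rank(X_0)$ is automatically a proper subset of $X_0$, hence a member of $\mu|X_0$, hence of $M$. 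Thus it suffices to produce a single $Z(Y)\in\mu|X_0$ of rank $\eta$ with $Y\cap X_0\subseteq Z(Y)$; condition (2) and membership in $M$ then come for free, and (1) is exactly the covering requirement.

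Next I would assemble the building blocks inside $X_0$. Using the neatness of $\mu$ (so that $X_0=\bigcup(\mu|X_0)$) together with the density lemma \ref{density}(2), I would show that every point of $X_0$ lies in some rank-$\eta$ member of $\mu|X_0$, so these rank-$\eta$ ``blocks'' cover $X_0$. The coherence lemma \ref{coherence} then does the real work: if $Z\in\mu|X_0$ has rank $\eta$ and shares a point $\beta$ with $Y$ (both being of rank $\eta$), then $Z\cap\beta=Y\cap\beta$. Consequently, for $W:=Y\cap X_0$, any rank-$\eta$ block of $\mu|X_0$ passing through a point $\beta'\in W$ already contains every element of $W$ below $\beta'$, i.e.\ it swallows the whole initial segment $\{\beta\in W:\beta\le\beta'\}$. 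In particular, if $W$ has a maximum $\beta^\ast$, the rank-$\eta$ block of $\mu|X_0$ through $\beta^\ast$ contains all of $W$, and we are done.

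The main obstacle is therefore the case in which $W=Y\cap X_0$ has no maximum (in particular when $W$ is cofinal in $X_0$), since then no single ``through a point'' argument captures the cofinal tail. Here I would exploit that by local smallness \ref{morass}(2) there are only $<\kappa$ members, hence $<\kappa$ rank-$\eta$ blocks, in $\mu|X_0$, and that these blocks are coherently stacked by the remark following Proposition \ref{cofinality} (for rank-$\eta$ blocks $Z,Z'$ with $\sup Z\le\sup Z'$ one has $Z\cap Z'<Z\setminus Z'<Z'\setminus Z$). Combining the stacking with the coherence identity $Z\cap\beta'=Y\cap\beta'$ along an increasing cofinal sequence of points $\beta'\in W$, I expect the blocks through points of $W$ to stabilize into one block $Z(Y)$ absorbing all of $W$; that $W$ has order type at most that of $Y$, equal to the common order type of the rank-$\eta$ blocks, guarantees there is no dimension obstruction to $W$ fitting into a single block. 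An alternative route for this case, which I would try if the stacking argument gets delicate, is to invoke Lemma \ref{musequencecofcof}: since $\delta=M\cap\kappa$ is a limit ordinal and $rank(Y)=\eta<\delta$, for a suitable $\alpha\in X_0$ one has $Y\cap\mu_\delta(\alpha)=Y\cap X_0\cap\alpha\subseteq\mu_{\delta'}(\alpha)$ for some $\delta'<\delta$, pulling $W$ into a single lower-rank initial segment inside which a covering rank-$\eta$ block is located by \ref{density}. I anticipate that verifying the no-maximum case cleanly, rather than the reduction, is where the essential care is required.
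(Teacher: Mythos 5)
Your reduction is sound and matches the paper's use of the coding proposition: indeed $rank(X_0)=\delta$, $\mu|X_0\subseteq M$, and it suffices to find a rank-$\eta$ element of $\mu|X_0$ covering $W=Y\cap X_0$; the coherence argument for the case where $W$ has a maximum is also correct. But the no-maximum case, which you yourself identify as the crux, is not closed by either of your routes, so there is a genuine gap. Route A (stabilization): through a given point $\beta'\in W$ there may be several rank-$\eta$ blocks of $\mu|X_0$; the coherence lemma only guarantees that a block through $\beta'$ agrees with $Y$ \emph{below} $\beta'$, and it may diverge from $Y$ arbitrarily above $\beta'$. Along a cofinal sequence $\beta_0<\beta_1<\cdots$ in $W$ you therefore get a decreasing sequence of nonempty sets of candidate blocks, and nothing in local smallness or in the stacking property forces this sequence to have nonempty intersection (a decreasing sequence of nonempty subsets of a set of size $<\kappa$ can perfectly well have empty intersection). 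The assertion that some coherent choice "stabilizes" into a single block absorbing all of $W$ is exactly the statement being proved, so this route begs the question. Route B: applying Lemma \ref{musequencecofcof} requires an $\alpha\in X_0$ with $W\subseteq\alpha$, which is unavailable precisely when $W$ is cofinal in $X_0$ (a case you have not excluded); and even when such an $\alpha$ exists, the conclusion $W\subseteq\mu_{\delta'}(\alpha)$ places $W$ inside $X'\cap\alpha$ for some rank-$\delta'$ element $X'\ni\alpha$ that need not be a subset of $X_0$, while the density lemma \ref{density} only interpolates between elements of $\mu$; so this does not produce a rank-$\eta$ element of $\mu|X_0$ containing $W$ --- at best it trades rank $\delta$ for rank $\delta'$ with the same problem intact.

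The idea you are missing is homogeneity (Definition \ref{morass}(3)) combined with Lemma \ref{corcoherence}, which handles all cases at once with no split on whether $W$ has a maximum. The paper's proof is short: by the density lemma choose $X\in\mu$ with $Y\subseteq X$ and $rank(X)=rank(X_0)=\delta$, and set $Z(Y)=f_{X_0X}[Y]$. By homogeneity the map $Z\mapsto f_{X_0X}[Z]$ carries $\mu|X$ onto $\mu|X_0$ preserving rank, so $Z(Y)\in\mu|X_0$ and $rank(Z(Y))=rank(Y)$, whence $Z(Y)\in M$ by your own reduction. Finally $Y\cap X_0\subseteq X\cap X_0$ and $f_{X_0X}$ is the identity on $X\cap X_0$ by Lemma \ref{corcoherence}, so $Y\cap X_0=f_{X_0X}[Y\cap X_0]\subseteq Z(Y)$. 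Your toolkit (coherence, density, neatness, local smallness, \ref{musequencecofcof}) never invokes homogeneity, and that is precisely the axiom that transports the "global" witness $Y$ into $\mu|X_0$; without it the limit case cannot be recovered.
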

\begin{proof}
Use the density lemma \ref{density} to find $X\in\mu$ such that
$Y\subseteq X$ and $rank(X)=rank(X_0)$. Now use the homogeneity
\ref{morass} to construct $Z(Y)=f_{X_0X}[Y]$ satisfying (2).
By the previous proposition $Z(Y)\in M$.
To prove (1)  note that $Y\cap X_0\subseteq X\cap X_0$ and $f_{X_0X}$
is the identity on $X\cap X_0$ by \ref{corcoherence}.
\end{proof}

Note  that by the coherence lemma \ref{coherence}, it follows that in the above lemma 
$Z(Y)$ is an end-extension of $X_0\cap Y$.
\vskip 26pt

\section{Recursive constructions}

In this section we give an example of a recursive construction where the recursion is carried
out along a $2$-cardinal instead of the usual one-dimensional cardinal.
Instead the usual chain $(S_\alpha: \alpha<\kappa)$ where
$S_\alpha$ is a nice substructure of $S_{\alpha'}$ for $\alpha<\alpha'<\kappa$ we consider
a well-founded directed system  $(S_X: X\in\mu)$ where
$\mu$ is a $(\kappa,\kappa^+)$-cardinal and $S_X$ is a nice substructure of $S_Y$
whenever $X\subseteq Y$ and $X,Y\in\mu$. The well-foundedness allows us to do
a recursive definition of the structures $S_X$. In the case of $X$ of a limit rank, we use the
directedness \ref{morass} and take an appropriate limit of the directed system $(S_Y: Y\in \mu|X)$.
In the case of $X=X_1*X_2$ we may take advantage of the coherence properties of 
a $2$-cardinal if our structures $S_Y$ are nicely related to the order of $Y\in\mu$ inherited from
$\kappa^+$. Namely, we may assume that $X_1$ and $X_2$ are isomorphic (in a sense depending
on the context) and that the isomorphism is the identity on the substructure 
induced by $X_1\cap X_2$, if the structures $S_X$ for $X\in\mu$ involve also substructures
$S_{X\cap\alpha}$ for $\alpha\in X$  which are completely determined by $X\cap \alpha$.
The coherence lemma \ref{coherence} should then  imply that 
$$S_{X_1\cap\alpha_1}=S_{X_2\cap \alpha_2},$$
where $\alpha_i=\min X_i\setminus X_{3-i}$ for $i=1,2$. 
The inductive step may be successful if the existence of such an isomorphism which
is the identity on ``the common part" allows  us to amalgamate the structures $S_{X_1}$ and $S_{X_2}$
into $S_{X_1*X_2}$ maintaining the fact that 
$S_{(X_1*X_2)\cap\alpha}$ for $\alpha\in X_1*X_2$  is  determined by $X\cap \alpha$.
The final structure is obtained as an appropriate limit of $(S_X:X\in\mu)$.

Actually, the above determination of structures
$S_{X\cap\alpha}$  in the construction hints to an explicit and not recursive definition
of the final structure. In many cases described in this paper, we present such explicit definitions
obtained by analyzing the recursive process along a $2$-cardinal (see the next section). 
On the other hand it is like with the usual
linear recursion along an ordinal, the recursion can be so complex that it is more readable to find 
and present the right construction using the recursive definition instead of an explicit one.

One should observe the analogy of the above described constructions with 
forcing the entire structure with substructures $S_X$ for $X\in\mu$ or $X\in \wp_{\kappa}(\kappa^+)$. 
The forcing can be $\kappa$-closed, so we face the problem of proving
that it is $\kappa^+$-c.c. which reduces to appropriate amalgamations.
This analogy, of course, is behind Velleman's or Shelah and Stanley's formulation of morasses 
in the language of a forcing axiom (\cite{vellemanjsl}, \cite{shelahstanley}).

In this section we present a version of the result of Koepke and Martinez
involving superatomic Boolean algebras.
Recall that a superatomic algebra is called $\kappa$-thin tall if and only if
it has height $\kappa^+$ and width $\kappa$ (see a survey paper of
J. Roitman \cite{roitman} for the terminology concerning superatomic Boolean algebras).

\begin{theorem}[\cite{koepke}]\label{thintall}
Suppose that $\kappa$ is a regular cardinal and there exists
a $(\kappa,\kappa^+)$-cardinal. Then there is a $\kappa$-thin tall
superatomic Boolean algebra.
\end{theorem}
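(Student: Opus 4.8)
The plan is to reduce the statement to a purely topological construction and then to carry out that construction by recursion along $\mu$ in exactly the manner outlined at the beginning of this section. First I recall the standard Stone-type correspondence: if $S$ is a locally compact, scattered, Hausdorff space, then the Boolean algebra $\mathcal{K}(S)$ of its compact-open subsets is superatomic, its Cantor--Bendixson height equals the Cantor--Bendixson height of $S$, and the number of atoms of the $\beta$-th quotient equals the cardinality of the $\beta$-th Cantor--Bendixson level $I_\beta(S)$. Hence it suffices to build a locally compact scattered Hausdorff space $S$ whose point set has size $\kappa^+$, whose derivative first empties at stage $\kappa^+$ (so the height is $\kappa^+$, i.e.\ the algebra is tall), and all of whose levels $I_\beta(S)$, $\beta<\kappa^+$, have cardinality at most $\kappa$ (so the width is $\kappa$, i.e.\ the algebra is thin).

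Second, I would build $S$ as the limit of a coherent directed system $(S_X)_{X\in\mu}$ of small spaces, where each $S_X$ is a locally compact scattered Hausdorff topology whose point set is the set of ordinals $X$ itself and whose Cantor--Bendixson levels all have cardinality $<\kappa$ (automatic, since $|X|<\kappa$). The system is required to be coherent in the sense emphasized in this section: whenever $X\subseteq Y$ in $\mu$ the set $X$ is open in $S_Y$ and $S_X$ is the induced subspace, and --- crucially --- for every $\alpha\in X$ the subspace that $S_X$ induces on $X\cap\alpha$ depends only on the set $X\cap\alpha$ and not on $X$; denote it $S_{X\cap\alpha}$. The recursion defining $S_X$ has the three shapes dictated by Definition \ref{morass}. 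For $X$ of rank $0$ I fix a canonical small scattered locally compact topology on $X$ depending only on $X$ as a set of ordinals. For $X$ of limit rank I use directedness \ref{morass}(4) and neatness \ref{morass}(7) to take the direct limit of $(S_Y)_{Y\in\mu| X}$, which by the inductive coherence is a topology on $X=\bigcup(\mu| X)$. For $X=X_1*X_2$ I amalgamate $S_{X_1}$ and $S_{X_2}$: by Lemma \ref{corcoherence} the order isomorphism $f_{X_2X_1}$ is the identity on $X_1\cap X_2$, and by the coherence lemma \ref{coherence}, writing $\alpha_i=\min(X_i\setminus X_{3-i})$, one gets $S_{X_1\cap\alpha_1}=S_{X_2\cap\alpha_2}$, so the two small spaces already agree on their common initial part and can be glued into a single scattered space on $X_1\cup X_2=X$ whose levels remain of size $<\kappa$. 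Setting $S=\bigcup_{X\in\mu}S_X$, with the topology generated by the $S_X$-open sets, gives a space on $\bigcup\mu=\kappa^+$ by the covering \ref{morass}(6).

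Third comes the verification. Scatteredness and local compactness pass to the direct limit because every point already has a compact-open neighbourhood inside some $S_X$, and a direct limit of scattered spaces along open embeddings is scattered, so $\mathcal{K}(S)$ is superatomic. For the height I would use the density lemma \ref{density} together with Lemma \ref{musequencelemma}: since the order types of the elements of $\mu$, and the depths of the $\mu$-sequences, are cofinal in the ranks available, every $\beta<\kappa^+$ is realized as a Cantor--Bendixson rank and the derivative does not stabilize before $\kappa^+$, giving height exactly $\kappa^+$. For the width --- that each level $I_\beta(S)$ has size at most $\kappa$ --- I would use coherence in the form of Lemma \ref{musequencecoherence}: the requirement that $S_{X\cap\alpha}$ depend only on $X\cap\alpha$ forces the restriction $I_\beta(S)\cap\alpha$ to be computed inside $\mu_\xi(\alpha)$ for a single $\xi<\kappa$, so that each level meets every proper initial segment of $\kappa^+$ in a set of size $<\kappa$; a subset of $\kappa^+$ all of whose proper initial segments have size $<\kappa$ has order type at most $\kappa$, whence $|I_\beta(S)|\le\kappa$.

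The main obstacle is the amalgamation step $X=X_1*X_2$. One must define the glued topology so that simultaneously (i) both $S_{X_1}$ and $S_{X_2}$ sit in it as open subspaces, (ii) the result is again locally compact scattered Hausdorff with all levels of size $<\kappa$ and with the Cantor--Bendixson rank increased in a controlled way (this is where a controlled raising of ranks produces the growth of height needed for tallness), and (iii) the initial-segment determination ``$S_{X\cap\gamma}$ depends only on $X\cap\gamma$'' is preserved, since this invariant is exactly what makes the final union well defined and keeps the width at $\kappa$. The equality $S_{X_1\cap\alpha_1}=S_{X_2\cap\alpha_2}$ supplied by the coherence lemma \ref{coherence} is precisely the hypothesis that makes such a consistent gluing possible; verifying that the glued space still satisfies the three coherence invariants is the technical heart of the argument, everything else being the routine transfer of a ZFC-style scattered-space construction through the directed system.
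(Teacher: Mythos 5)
Your reduction (locally compact scattered Hausdorff space of height $\kappa^+$ with levels of size $\le\kappa$, then take the compact-open algebra) and your recursion skeleton (rank zero, limits via directedness, amalgamation at $X=X_1*X_2$ via the coherence lemma) do parallel the paper, which reduces instead to an admissible $(\kappa,\kappa^+)$-order and runs the recursion through Lemmas \ref{limitorder} and \ref{successororder}. But your coherence requirement contains a fatal flaw: you demand that whenever $X\subseteq Y$ in $\mu$ the set $X$ be \emph{open} in $S_Y$ with $S_X$ the induced subspace, and consequently each $S_X$ is an open subspace of the limit $S$. Cantor--Bendixson derivatives localize to open sets: for $U$ open in $S$ one shows by induction that $U^{(\beta)}=S^{(\beta)}\cap U$ for every $\beta$. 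Hence the rank of any point of $S$ equals its rank computed inside any open $S_X$ containing it. Since $|S_X|=|X|<\kappa$, the height of $S_X$ is an ordinal of cardinality at most $|X|$, hence $<\kappa$; so every point of $S$ has rank $<\kappa$ and the height of $S$ is at most $\kappa$. Your space can never be tall: the very hypothesis that drives your scatteredness, local compactness and width verifications is the one that caps the height at $\kappa$.

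Tallness requires exactly the opposite behaviour: later stages of the recursion must keep adding new points \emph{below} already constructed points, so that the rank of a fixed point is not decided at any bounded stage but grows along the construction, and the rank of a point in an intermediate structure is \emph{not} its rank in the final object. This is what the paper's notion of \emph{good suborder} permits --- it preserves only the infima $i(s,t)$, not ranks and nothing like openness --- and it is what Lemma \ref{successororder} actually does: to restore admissibility it ``sticks'' fresh copies of $\eta\times X_1$ \emph{below} the elements $(\xi,\alpha_1)$, $\alpha_1=\min(X_2\setminus X_1)$. Note also that this is why the paper's underlying set is $\eta_X\times X$ with $\eta_X$ growing (to $\eta^\omega$ at successor steps) rather than $X$ itself: the extra first coordinate supplies the fresh points to be inserted below old ones, and the admissibility condition (infinitely many points of every lower level beneath every point --- forced in any Hausdorff scattered space, where a point of rank $\beta+1$ must be a limit of infinitely many points of rank $\beta$) is the actual engine producing height $\kappa^+$ in the limit. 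In your proposal this mechanism is only gestured at as ``controlled raising of ranks,'' and under your open-embedding requirement it is provably unavailable: even though a later stage $Y$ does bring new ordinals of $Y\cap\gamma\setminus X$ below a point $\gamma\in X$, openness of $S_X$ in $S_Y$ forbids them from raising the rank of $\gamma$.
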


When working with partial orders below, by compatibility of two elements $t, s$ 
we mean the forcing compatibility that is the existence of $u\leq t, s$; 
if $u\leq t$ or $t\leq u$ then, we say that $u$ and $t$ are comparable.

\begin{definition}\label{order} Let $\kappa$ be a cardinal and $A\subseteq \kappa\times\kappa^+$.
 We say that
a strict partial order $\ll$ on $A$ is an $A$-order if and only if : 
\begin{enumerate}
\item if $s=\langle\xi, \alpha\rangle$, $t=\langle\xi',\beta\rangle$ 
are distinct and $s\ll t$, then $\alpha<\beta$,
\item every pair $s,t$  of compatible  elements of $A$ has  the infimum,
that is  the set $\{u\in A: u\ll s,t\}$ has the $\ll$-biggest element denoted
by $i_A(s,t)=i(s,t)$.
\end{enumerate}
If $A\subseteq B\subseteq \kappa\times\kappa^+$ and $(A,\ll_A)$ and $(B,\ll_B)$
are $A$ and $B$-orders respectively, then we say that $(A,\ll_A)$ is a good
suborder of $(B,\ll_B)$ whenever it is a suborder and $i_A(s,t)=i_B(s,t)$
for $s,t\in A$.

Moreover we say that $(A, \ll_A)$ is admissible if 
  whenever $\alpha<\beta<\kappa^+$ appear among the second coordinates of elements
of $A$
 and $t=\langle\xi',\beta\rangle\in A$
then $$\{\xi: s=\langle\xi,\alpha\rangle, s\ll_A t\}$$ is infinite.

\end{definition}

The construction of a $\kappa$-thin tall superatomic Boolean algebra can be
easily reduced to an appropriate order by the following:

\begin{proposition}[\cite{koepke}] If there is a $(\kappa,\kappa^+)$-order
which is admissible, then there is
a $\kappa$-thin tall superatomic Boolean algebra.
\end{proposition}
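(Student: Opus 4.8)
The plan is to build a superatomic Boolean algebra from the admissible $(\kappa,\kappa^+)$-order $(A,\ll)$ by taking the algebra of sets generated by the ``cones'' (or initial segments) determined by elements of $A$. Specifically, for each $t\in A$ let $U_t=\{s\in A:\ s\ll t\ \hbox{or}\ s=t\}$ be the set of $\ll$-predecessors of $t$ together with $t$ itself, and let $B$ be the subalgebra of $\wp(A)$ generated by $\{U_t:\ t\in A\}$. The superatomicity and the control of height and width will then come from stratifying $A$ according to the second coordinate: for $\alpha<\kappa^+$ write $A_\alpha=\{s\in A:\ s=\langle\xi,\alpha\rangle\ \hbox{for some}\ \xi<\kappa\}$ and let $I_\alpha$ be the ideal of $B$ generated by $\bigcup_{\beta<\alpha}A_\beta$ (the elements whose second coordinate is below $\alpha$). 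The idea is that the Cantor--Bendixson levels of $B$ correspond to this $\kappa^+$-indexed filtration, with each level contributing exactly $\kappa$ atoms, namely the atoms coming from the $\kappa$-many elements of $A_\alpha$.

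First I would verify that $B$ is indeed superatomic by showing that the filtration $(I_\alpha)_{\alpha\leq\kappa^+}$ exhausts $B$ and that each quotient $I_{\alpha+1}/I_\alpha$ is atomic; the key point is that modulo $I_\alpha$, each generator $U_t$ with $t\in A_\alpha$ becomes an atom. Here condition (1) of Definition \ref{order} is what guarantees that predecessors of $t=\langle\xi',\alpha\rangle$ lie strictly below level $\alpha$, so that $U_t$ modulo $I_\alpha$ reduces to the singleton $\{t\}$, giving an atom; and condition (2), the existence of infima, is what makes the family of cones close up nicely under intersection so that $B$ has a manageable description (each element of $B$ being a finite Boolean combination of cones, reducible using $U_s\cap U_t=U_{i(s,t)}$ when $s,t$ are compatible and $U_s\cap U_t=\emptyset$ otherwise). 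Thus width at most $\kappa$ follows because $|A_\alpha|\leq\kappa$, so each level has at most $\kappa$ atoms, and height exactly $\kappa^+$ follows because every $\alpha<\kappa^+$ appears among second coordinates and contributes a nonempty level.

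The role of admissibility is to guarantee that the algebra has height \emph{exactly} $\kappa^+$ rather than collapsing prematurely: the condition that for each $t=\langle\xi',\beta\rangle$ and each earlier $\alpha<\beta$ the set of $s=\langle\xi,\alpha\rangle$ with $s\ll t$ is infinite ensures that the cone $U_t$ genuinely meets every lower level in infinitely many points, so that no level $A_\alpha$ is swallowed into the ideal generated by higher levels and the Cantor--Bendixson process does not terminate before reaching $\kappa^+$. Concretely, admissibility is used to show that modulo $I_\alpha$ the level-$\alpha$ atoms are distinct and nonzero, i.e.\ that $U_t\notin I_\alpha$ whenever $t\in A_\alpha$, because $U_t$ contains infinitely many points at each lower level and hence is not a finite union of lower cones; this is exactly what prevents the width from exceeding $\kappa$ at any single level while keeping all $\kappa^+$ levels alive.

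The main obstacle I expect is the bookkeeping for the normal-form reduction of elements of $B$: one must argue that every element of the generated algebra can be written as a finite union of ``differences of cones'' $U_t\setminus(U_{s_1}\cup\dots\cup U_{s_n})$ in a canonical way, and then read off the Cantor--Bendixson rank from the maximal second-coordinate $\alpha$ appearing. Showing that the quotient by $I_\alpha$ has precisely the atoms $\{U_t/I_\alpha:\ t\in A_\alpha\}$ and nothing more, and that these are pairwise distinct and nonzero, requires combining the infimum property (2) to collapse intersections with the admissibility condition to rule out spurious vanishing; this is where the argument is most delicate, since one must simultaneously bound the number of atoms by $\kappa$ (from above, using $|A_\alpha|\leq\kappa$) and show they are all present (from below, using admissibility). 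Once the normal form and the identification of atoms at each level are established, the conclusion that $B$ is $\kappa$-thin tall is immediate from $|A_\alpha|\leq\kappa$ for all $\alpha$ and the fact that all of $\kappa^+$ is realized among the second coordinates.
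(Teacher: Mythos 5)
First, a point of reference: the paper does not actually prove this proposition --- it quotes it from Koepke and Martinez \cite{koepke} and immediately turns to the real task of constructing an admissible $(\kappa,\kappa^+)$-order. So your proposal has to be measured against the standard argument of \cite{koepke}, and in outline it is that argument: the subalgebra $B\subseteq\wp(A)$ generated by the cones $U_t$, the normal form for elements of $B$ via the fact that $U_s\cap U_t$ is $\emptyset$, $U_{i(s,t)}$, or one of $U_s,U_t$, and the filtration $(I_\alpha)$ by the ideals generated by cones of level $<\alpha$ are all exactly right, as is the verification that each $[U_t]$, $t\in A_\alpha$, is a nonzero atom of $B/I_\alpha$. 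One misattribution, though: admissibility is not what makes $U_t\notin I_\alpha$; that is trivial, since $t\in U_t$ has second coordinate $\alpha$ while every member of $I_\alpha$ consists of points of level $<\alpha$.

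The genuine gap is precisely the step you defer as ``delicate'' and never carry out: showing that $B/I_\alpha$ is \emph{atomic} and that its atoms are \emph{only} the classes $[U_t]$ with $t\in A_\alpha$. Both reduce to a covering lemma which is the real content of the proposition: if $r$ has level $\beta>\alpha$ and $s_1,\dots,s_n\ll r$, then the set $P_\alpha(r)$ of level-$\alpha$ predecessors of $r$ is not contained in $U_{s_1}\cup\dots\cup U_{s_n}$ (together with its variant: no level $A_\alpha$ is covered by finitely many cones, needed for classes of the form $[A\setminus(U_{s_1}\cup\dots\cup U_{s_n})]$). If this could fail, the nonzero class $[U_r\setminus(U_{s_1}\cup\dots\cup U_{s_n})]$ would dominate no level-$\alpha$ atom and would instead sit above ``spurious'' atoms such as $[U_w\setminus U_{s_1}]$ for $w$ of higher level; then the Cantor--Bendixson ideals would not be your $I_\alpha$'s, and the computations of height and width would both collapse. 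Note that admissibility alone cannot prove the lemma --- it makes cones fat, which helps a finite family of cones cover $P_\alpha(r)$ rather than preventing it. What is needed is its conjunction with conditions (1) and (2) of Definition \ref{order}: by (2) the common lower bounds of two incomparable elements $u,v$ form a single cone $U_{i(u,v)}$, and by (1) its apex lies at a level strictly below both. One then argues by induction on $\gamma=\max_i(\hbox{level of } s_i)$: if $\gamma=\alpha$, the cones $U_{s_i}$ meet $A_\alpha$ in at most the finitely many points $s_i$, while $P_\alpha(r)$ is infinite by admissibility; if $\gamma>\alpha$, admissibility yields $u\ll r$ of level $\gamma$ distinct from all $s_i$, each $U_u\cap U_{s_i}$ is then empty or a cone $U_{v_i}$ with $v_i\ll u$ of level $<\gamma$, and the problem reduces, inside $U_u$, to one of strictly smaller maximal level. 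With this lemma inserted, your architecture goes through; without it, the proposal proves neither superatomicity nor the width bound, since it cannot exclude atoms other than the ones it exhibits.
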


So from this point on we focus on constructing a $(\kappa,\kappa^+)$-order
which is admissible.
To carry out our  recursion we need  a few lemmas.

\begin{lemma}\label{limitorder} Let $\nu\subseteq \wp_\kappa(\kappa^+)$ be
 a directed family and   for every $X\in \nu$ let $\eta_X$ be an ordinal and 
 $(\eta_X\times X, \ll_X)$ be an $\eta_X\times X$-order which is admissible.  Suppose that
if $X, Y$ are elements of $\nu$ with $X\subseteq Y$, then
$\eta_X\leq \eta_Y$ and 
$(\eta_X\times X, \ll_X)$  is a good suborder of $(\eta_Y\times Y, \ll_Y)$. 
 Then putting $Z=\bigcup_{X\in \nu} X$ and
$\eta_Z=\sup_{X\in\nu}\eta_X$ and $\ll_Z=\bigcup_{X\in \nu} \ll_X$ we have that
 $(\eta_Z\times Z, \ll_Z)$ is an $\eta_Z\times Z$-order which is admissible such that
each $(\eta_X\times X, \ll_X)$ is a good suborder of it.

\end{lemma}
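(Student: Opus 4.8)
The plan is to verify each defining property of an admissible order directly on the union structure, taking advantage of the fact that the union is a directed colimit along good suborders. First I would check that $(\eta_Z\times Z,\ll_Z)$ is a well-defined strict partial order. Since $\nu$ is directed and the $\ll_X$ agree on overlaps (good suborders are in particular suborders, so $\ll_X=\ll_Y\restriction(\eta_X\times X)$ whenever $X\subseteq Y$), any two elements $s,t\in\eta_Z\times Z$ lie in a common $\eta_X\times X$: indeed $s=\langle\xi,\alpha\rangle$ and $t=\langle\xi',\beta\rangle$ with $\xi,\xi'<\eta_Z$ and $\alpha,\beta\in Z$, and by directedness of $\nu$ I can find $X\in\nu$ containing $\alpha,\beta$ and with $\eta_X$ large enough to bound $\xi,\xi'$ (using $\eta_X\leq\eta_Y$ monotonicity and taking the larger of the two witnesses). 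Transitivity, irreflexivity and antisymmetry then all reduce to the corresponding facts inside a single $\eta_X\times X$, because any finite configuration of related elements is captured in one stage.

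Next I would verify condition (1) of Definition \ref{order}: if $s=\langle\xi,\alpha\rangle\ll_Z t=\langle\xi',\beta\rangle$ are distinct, then the relation $s\ll_X t$ already holds at some stage $X$, where condition (1) gives $\alpha<\beta$ immediately. For condition (2), the existence of infima, I take compatible $s,t\in\eta_Z\times Z$, locate a stage $X$ containing both, and set $u=i_X(s,t)$. The point is that $u$ is the genuine infimum in the union, and this is precisely where the good suborder hypothesis does the work: I must show $i_Z(s,t)=i_X(s,t)$. If some $v\in\eta_Z\times Z$ had $v\ll_Z s,t$ with $v$ strictly $\ll_Z$-above $u$, I would pass to a larger stage $Y\supseteq X$ containing $v$; then in $(\eta_Y\times Y,\ll_Y)$ both $u$ and $v$ sit below $s,t$, and since $(\eta_X\times X,\ll_X)$ is a \emph{good} suborder of $(\eta_Y\times Y,\ll_Y)$ we have $i_Y(s,t)=i_X(s,t)=u$, so $v\ll_Y u$, contradicting that $v$ was strictly above $u$. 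This simultaneously shows the infimum exists in $Z$ and that each $(\eta_X\times X,\ll_X)$ is a good suborder of the union.

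Finally I would check admissibility of $(\eta_Z\times Z,\ll_Z)$. Suppose $\alpha<\beta$ both appear as second coordinates of elements of $\eta_Z\times Z$ and $t=\langle\xi',\beta\rangle\in\eta_Z\times Z$. Choosing a stage $X\in\nu$ with $\alpha,\beta\in X$ and $\xi'<\eta_X$ (and with some element witnessing $\alpha$ as a second coordinate also present, again by directedness), admissibility of $(\eta_X\times X,\ll_X)$ gives infinitely many $\xi$ with $\langle\xi,\alpha\rangle\ll_X t$; since $\ll_X\subseteq\ll_Z$, these same $\xi$ witness admissibility at the level of $Z$, and infinitude is preserved.

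The main obstacle is the infimum computation, and specifically the role of the good suborder hypothesis there. The subtlety is that while the existence of \emph{some} lower bound transfers upward trivially, the claim that the infimum does not \emph{increase} when we enlarge the structure is exactly what ``good suborder'' encodes, and it must be invoked at the stage containing the candidate competitor $v$ rather than at the stage $X$ where $u$ was first computed. Everything else is a routine ``finite configurations live in one stage'' argument enabled by directedness; I would be careful only to state the monotonicity $\eta_X\leq\eta_Y$ is what lets me bound first coordinates when absorbing two elements into a common stage.
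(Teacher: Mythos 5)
The paper states Lemma \ref{limitorder} without proof---it is treated as a routine verification---so there is no argument in the paper to compare yours against; judged on its own, your proof is correct and is exactly the routine directed-colimit verification the paper implicitly has in mind: every finite configuration of elements and relations of $\ll_Z$ is absorbed into a single stage by directedness of $\nu$ together with the monotonicity $\eta_X\leq\eta_Y$, condition (1) and admissibility transfer upward verbatim, and the good-suborder hypothesis is invoked, correctly, at a stage large enough to contain the competing lower bound. Two small points of hygiene. First, when you ``locate a stage $X$ containing both and set $u=i_X(s,t)$'', you need $s,t$ to be compatible \emph{in} $(\eta_X\times X,\ll_X)$ for $i_X(s,t)$ to be defined; compatibility in $\ll_Z$ only provides a witness $v\ll_Z s,t$ whose relations to $s,t$ live at other stages, so you should either choose the stage large enough to contain $v$ (and the relations $v\ll s$, $v\ll t$), or note that goodness forces compatibility to reflect downward from a larger stage to a smaller one. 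Second, your contradiction framing (``$v$ strictly $\ll_Z$-above $u$'') is not the negation of what must be ruled out---a lower bound $v$ could a priori be incomparable with $u$---but this is cosmetic: the inference you actually carry out (pass to $Y\supseteq X$ containing $v$, get $i_Y(s,t)=i_X(s,t)=u$ by goodness, conclude $v\leq_Y u$) proves directly that every $\ll_Z$-lower bound of $s,t$ lies below $u$, which is precisely what is needed for $u=i_Z(s,t)$ and, simultaneously, for each stage being a good suborder of the union.
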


\begin{lemma}\label{successororder} Let $\kappa$ be a 
regular cardinal and $\mu$ be a $(\kappa, \kappa^+)$-cardinal.
Suppose that $X\in\mu$ is of successor rank and $X=X_1*X_2$. 
 Suppose that $\eta<\kappa$ and that
$(\eta\times X_1,\ll_{1})$ and $(\eta\times X_2,\ll_{2})$ are
$(\eta, X_1)$-order and $(\eta, X_2)$-order respectively which are admissible and such that
 $f:\eta\times X_2\rightarrow \eta\times X_1$
given by $f(\xi,\alpha)=(\xi,f_{X_1X_2}(\alpha))$ is an isomorphism between the orders,
in particular
$$\ll_{1}\cap [\eta\times (X_1\cap X_2)]^2=\ll_{2}\cap [\eta\times (X_1\cap X_2)]^2.$$
Then there is $\eta<\eta'<\kappa$ and an $(\eta'\times X, \ll)$-order which is admissible such that
$(\eta\times X_1, \ll_1)$, $(\eta\times X_2,\ll_2)$  are good suborders of $(\eta'\times X, \ll)$.
\end{lemma}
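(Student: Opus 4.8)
The plan is to decompose $X=X_1*X_2$ into three consecutive blocks, amalgamate the two given orders over their common block, and then add just enough new relations at a few fresh levels to restore admissibility between the two non-shared blocks. First I would record the shape of $X_1*X_2$: by the definition of $*$ together with Lemma \ref{corcoherence}, writing $I=X_1\cap X_2$, $A_1=X_1\setminus X_2$, $A_2=X_2\setminus X_1$, we have $I<A_1<A_2$, $X_1=I\cup A_1$, $X_2=I\cup A_2$, and $f_{X_1X_2}$ is the identity on $I$ and carries $A_2$ onto $A_1$ order-preservingly. The hypothesis that $f$ is an isomorphism between $(\eta\times X_2,\ll_2)$ and $(\eta\times X_1,\ll_1)$ then says exactly that the two orders agree on $\eta\times I$ and that the order over $A_1$ is the $f$-copy of the order over $A_2$.

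Next I would form the free amalgam $\ll^0=\ll_1\cup\ll_2$ on $\eta\times X$ (note $\eta\times X=(\eta\times X_1)\cup(\eta\times X_2)$). I claim $\ll^0$ is already an $\eta\times X$-order of which both $\ll_1$ and $\ll_2$ are good suborders. Transitivity is immediate once one observes, using clause (1) of Definition \ref{order} together with $I<A_1<A_2$, that anything lying $\ll^0$-below an element over $I$ is itself over $I$; hence every mixed composition passes through $\eta\times I$, where $\ll_1$ and $\ll_2$ coincide. For the same reason no element over $A_2$ can be $\ll^0$-below an element over $I\cup A_1$, and no element over $A_1$ can be $\ll^0$-below an element over $A_2$; so the lower bounds of a pair from $\eta\times X_1$ (resp. $\eta\times X_2$) stay inside $\eta\times X_1$ (resp. $\eta\times X_2$), which yields clause (2) for $\ll^0$ and shows that the infima $i_{\ll_1}$, $i_{\ll_2}$ are unchanged. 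The only axiom $\ll^0$ fails is admissibility, and it fails in exactly one way: for $\alpha\in A_1$ and $\gamma\in A_2$ (so automatically $\alpha<\gamma$) no element over $\alpha$ is $\ll^0$-below any element over $\gamma$.

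It then remains to add cross relations from the $A_1$-columns into the $A_2$-columns. The guiding principle is that a new relation $\langle\xi,\alpha\rangle\ll\langle\xi',\gamma\rangle$ is harmless for the good-suborder property precisely when the $\ll_1$-predecessors of $\langle\xi,\alpha\rangle$ lying over $I$ are already among the $\ll_2$-predecessors of $\langle\xi',\gamma\rangle$ over $I$; for then transitivity creates no new relation inside $\eta\times X_2$, and, the new top being over $A_2\not\subseteq X_1$, none inside $\eta\times X_1$. For the pairs with $\alpha<f_{X_1X_2}(\gamma)$ --- equivalently $f_{X_2X_1}(\alpha)<\gamma$, since $f_{X_2X_1}(f_{X_1X_2}(\gamma))=\gamma$ --- this containment is automatic: as $f$ is an isomorphism that is the identity on $I$, the $I$-part of the $\ll_1$-down-set of $\langle\xi,\alpha\rangle$ is the $f$-image of the $I$-part of the $\ll_2$-down-set of $\langle\xi,f_{X_2X_1}(\alpha)\rangle$, which by admissibility of $\ll_2$ and transitivity sits inside the $I$-part of the $\ll_2$-down-set of $\langle\xi',\gamma\rangle$. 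Declaring $\langle\xi,\alpha\rangle\ll\langle\xi',\gamma\rangle$ whenever $\langle\xi,f_{X_2X_1}(\alpha)\rangle\ll_2\langle\xi',\gamma\rangle$ therefore supplies, via admissibility of $\ll_2$, infinitely many predecessors over $\alpha$ for every such pair, without disturbing either suborder.

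The hard part will be the complementary pairs, those with $\alpha\geq f_{X_1X_2}(\gamma)$, where the source column in $A_1$ sits at or above the $f$-image of the target column in $A_2$ and neither $\ll_1$ nor $\ll_2$ offers a pattern to transport. Here genuinely new relations must be introduced, and my intention is to enlarge the number of levels to some $\eta<\eta'<\kappa$ and to place all the connectors needed for these pairs on the fresh levels $[\eta,\eta')$: since such elements lie outside both $\eta\times X_1$ and $\eta\times X_2$, any relations among them or below them are invisible to $\ll_1$ and $\ll_2$, so the good-suborder property survives for free. Using that $|A_1|\cdot|A_2|<\kappa$ and that $\kappa$ is regular, one reserves for each hard pair $(\alpha,\gamma)$ and each $\xi'<\eta'$ an infinite block of fresh levels carrying predecessors over $\alpha$, arranged so that each connector serves a single target column; this prevents two distinct $A_2$-columns from acquiring a common new lower bound, which --- lying over $A_1>I$ --- could never be dominated by the old, over-$I$ infimum, and thereby keeps clause (2) intact. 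The most delicate step, and the one I expect to demand the most care, is precisely this verification of clause (2) for the transitive closure of $\ll^0$ together with all added relations: one must check that every pair of compatible elements still has a largest common lower bound once the fresh connectors are woven in, which is why the connectors should be organized into a tree-like pattern with prescribed meets rather than added ad hoc. The final order on $\eta'\times X$ is this transitive closure, and admissibility is checked block by block, the cross-block case being exactly what the added relations were designed to secure.
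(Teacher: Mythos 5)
Your overall plan---amalgamate $\ll_1$ and $\ll_2$ over the common block, add the $f$-induced cross relations, then repair admissibility on fresh levels---is in outline the paper's plan, and your ``easy pair'' declarations are literally the paper's: since $f$ is an isomorphism fixing $\eta\times(X_1\cap X_2)$ pointwise, your condition $\langle\xi,f_{X_2X_1}(\alpha)\rangle\ll_2\langle\xi',\gamma\rangle$ is equivalent to $\langle\xi,\alpha\rangle\ll_1 f(\xi',\gamma)$, which is exactly how the paper defines its first amalgam $\ll^*$. But there is a genuine gap at your very first step: the claim that the free union $\ll^0=\ll_1\cup\ll_2$ is already an $\eta\times X$-order is false, because clause (2) of Definition \ref{order} can fail for \emph{mixed} compatible pairs, which your case analysis (pairs inside $\eta\times X_1$, pairs inside $\eta\times X_2$) never touches. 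Concretely, take $s$ over $X_1\setminus X_2$ and $t=f^{-1}(s)$ over $X_2\setminus X_1$. Their common $\ll^0$-lower bounds are exactly the elements $u$ of the $\ll_1$-down-set of $s$ lying over $X_1\cap X_2$ (for such $u$ one has $u\ll_2 t\Leftrightarrow f(u)\ll_1 s$ and $f(u)=u$); when $X_1\cap X_2\neq\emptyset$ this set is nonempty by admissibility of $\ll_1$, so $s$ and $t$ are compatible, yet nothing in Definition \ref{order} prevents this set from having two incomparable maximal elements, so it need not have a $\ll^0$-greatest one. It is precisely the cross relations that repair this: once they are present, the common lower bounds of a mixed pair become $\{u: u\ll_1 s, f(t)\}$, with greatest element $i_{X_1}(s,f(t))$, which is how the paper verifies clause (2) for $\ll^*$. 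You, however, introduce these relations only as ``harmless'' admissibility-restoring additions and check only that they do not disturb the two suborders; clause (2) is thus asserted at the stage where it fails and never verified at the stage where it holds.

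The second gap is that the hard pairs ($\alpha\geq f_{X_1X_2}(\gamma)$), which are the real content of the lemma, are left as a plan: you defer exactly the verification you yourself flag as delicate (clause (2) after the connectors are added), and two necessary ingredients are absent from the sketch. First, connectors are themselves elements of the order, so each fresh $\langle\theta,\alpha\rangle$ with $\alpha\in X_1\setminus X_2$ needs infinitely many predecessors in \emph{every} column below $\alpha$; isolated connectors ``serving a single target column'' cannot satisfy this, so one must graft entire admissible configurations, not points. Second, the final domain must be exactly $\eta'\times X$, so the fresh levels of \emph{all} columns (also those over $X_1\cap X_2$ and over $X_2\setminus X_1$) must be populated and made admissible, which your construction never does. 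The paper resolves both issues at one stroke: it hangs whole fresh copies of the admissible order $(\eta\times X_1,\ll_1)$ below the elements $\langle\xi,\alpha_1\rangle$, where $\alpha_1=\min(X_2\setminus X_1)$; admissibility then propagates to every element over $X_2\setminus X_1$ by $\ll_2$-admissibility in the single column $\alpha_1$ plus transitivity, infima are preserved because each fresh element lies below a unique old anchor $\langle\xi''',\alpha_1\rangle$, which in turn lies below the old infimum, and the domain is made rectangular at the end by taking $\eta^\omega$ mutually incomparable shifted copies of the whole structure, using $\eta^2\cdot\eta^\omega=\eta\cdot\eta^\omega=\eta^\omega$. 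To complete your proof you should build the cross relations into the first amalgam (so clause (2) can be checked once, correctly) and replace the ad hoc connectors by this copy-hanging device.
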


\begin{proof}

First define an $\eta\times X$-order $\ll^*$ on $X=X_1*X_2$ which is not admissible but
such that $(\eta\times X_1, \ll_1)$, $(\eta\times X_2,\ll_2)$  are good suborders
 of $(\eta\times X, \ll^*)$.  Put
$s\ll^*t$ if and only if
 $s\ll_1t$ for $s,t\in\eta\times X_1$ or $s\ll_2t$ 
for $s,t\in\eta\times X_2$ or $s\ll_1 f(t)$ for $s\in\eta\times (X_1\setminus X_2)$
$t\in\eta\times (X_2\setminus X_1)$.

One proves that $\ll^*$ is a partial order indeed. For
this we note that $u\ll_1 s\ll_1 f(t)$ implies $u\ll_1 f(t)$
and that $s\ll_1 f(t)$, $t\ll_2 u$ implies $s\ll_1 f(u)$
as $f(t)\ll_1f(u)$ since  $f$ is an isomorphism.
$\ll^*$ clearly extends the orders $\ll_1$, $\ll_2$.

Then we note that the infima from $\ll_1$ and $\ll_2$
are preserved. One needs to check just $s,t\in X_2$.
Take $u\ll^*s,t$, one may assume that $u\in \eta\times (X_1\setminus X_2)$,
so $u\ll_1, f(t), f(s)$, so $f^{-1}(u)\ll_2 s, t$, this gives
$u\ll^* f^{-1}(u)\ll^* i_{X_2}(s,t)\ll^* s,t$ as required.

Finally let us prove the existence of the infimum for
$s\in \eta\times(X_1\setminus X_2)$ and $t\in \eta\times(X_2\setminus X_1)$.
Note that in that case
$\{u: u\ll^* s, t\}=\{u: u\ll_1 s, f(t)\}$ which has the biggest element $i_{X_1}(s, f(t))$,
thus (2) of Definition \ref{order} is satisfied.

Now it is enough to find $\eta<\eta'<\kappa$ and
an $(\eta'\times X)$-order $\ll$ which is admissible  and such that
$(\eta\times X, \ll^*)$ is a good suborder of $(\eta'\times X, \ll)$.
However, we will consider one more intermediate step.

Let $((\eta\xi, \eta(\xi+1)]\times X_1, \ll^\xi_1)$ for $0<\xi<\eta$ be  copies
of $(\eta\times X_1, \ll_1)$. Let $\alpha_1=\min(X_2\setminus X_1)$. Define
an $(\eta^2\times X_1)\cup[\eta\times (X_2\setminus X_1)]$-order $\ll^{**}$
by
\begin{itemize}
\item declaring $(\theta,\alpha)$ and $(\theta',\beta)$ incomparable
if $\alpha,\beta\in X_1$ and $\theta\in (\eta\xi, \eta(\xi+1)]$,
$\theta'\in (\eta\xi', \eta(\xi'+1)]$ for distinct $0\leq \xi,\xi'<\eta$,
\item sticking $(\eta(\xi+1), \eta(\xi+2)]\times X_1$ below 
$(\xi,\alpha_1)$ for each $0\leq \xi<\omega$.
\item sticking $(\eta\xi, \eta(\xi+1)]\times X_1$ below 
$(\xi,\alpha_1)$ for each $\omega\leq \xi<\eta$.
\end{itemize}
Of course after ``sticking" we make sure the new order is transitive by taking the transitive closure.
In fact, we just want to impose the admissibility condition which will fail for
$(\eta\times X, \ll^*)$ at elements $(\xi,\alpha)$ for $\xi<\eta$ and $\alpha\in X_2\setminus X_1$,
so  below these elements we stick some elements of the form $(\zeta, \beta)$ for 
$\eta<\zeta<\eta^2$ and $\beta\in X_1$.
We used copies of $(\eta\times X_1, \ll_1)$ because they are at hand (and are admissible), but 
most other choices would work if we do it in the incomparable manner as above.
We leave checking  the details of the fact that $\ll^{**}$ is  an $(\eta^2\times X_1)\cup[\eta\times (X_2\setminus X_1)]$-order
such that $\ll^*$ is a good suborder of it to the reader: the only nontrivial case for
checking the preservation of the $\ll^*$-suprema $i(s,t)$ is for
$s=(\xi,\alpha), t=(\xi', \beta)$ where $\xi, \xi'<\eta$ and $\alpha, \beta\in X_2\setminus X_1$;
but new elements $u=(\xi'', \gamma)$ below $s$ and $t$ must be for
$\gamma\in X_1$ and there must be a unique $w=(\xi''', \alpha_1)$ satisfying
$u\ll^{**}w\ll^{*}s, t$, so  $u\ll^{**}w\ll^{**}i(s,t)\ll^{**}s,t$
as required for the preservation of the suprema.

$\ll^{**}$ is a good extension of $\ll^{*}$ and so of $\ll_1$ and $\ll_2$ but
its domain is not of the form $\eta'\times X$ for $\eta'<\kappa$ and $X\in\mu$.
The last modification of $\ll^{**}$ aims at correcting this deficiency.
Using the fact that 
$\eta^2\eta^\omega=\eta^{2+\omega}=\eta^{1+\omega}=\eta\eta^\omega=\eta^\omega$
(with the ordinal exponentiation) we can construct  disjoint, incomparable  $\eta^\omega$-many  consecutive copies of 
$((\eta^2\times X_1)\cup[\eta\times (X_2\setminus X_1)], \ll^{**})$ 
with domains $(([\eta^2\xi, \eta^2\xi+\eta^2)\times X_1)\cup[\eta\xi, \eta\xi+\eta)\times (X_2\setminus X_1)]$
for $\xi<\eta^\omega$ and take their incomparable
union $\ll$ which will be a $(\eta^\omega\times X)$-order which is admissible and
such that $((\eta^2\times X_1)\cup[\eta\times (X_2\setminus X_1)], \ll^{**})$
 is a good suborder of it which completes the construction.

\end{proof}

\noindent{\bf Proof of Theorem \ref{thintall}}
\begin{proof}
By recursion on $X\in \mu$ we construct  an ordinal $\eta_X$ and an $\eta_X\times X$-order
$(\eta_X\times X, \ll_X)$ which is admissible, so that if $X, Y$ are elements of $\mu$ with $X\subseteq Y$, then
$\eta_X\leq \eta_Y$ and
$(\eta_X\times X, \ll_X)$  is a good suborder of $(\eta_Y\times Y, \ll_X)$.
 The lemmas \ref{limitorder} and \ref{successororder}
allow us to make the recursive step in such a way (i.e., looking backwards in $\kappa^+$) that 
 $(\eta_X\times X\cap\alpha, \ll_X)$ agrees with $(\eta_Y\times Y\cap\alpha, \ll_Y)$
whenever $X, Y\in\mu$ are of the same rank and $\alpha\in X\cap Y$, hence
$(\eta_X\times X\cap Y, \ll_X)$ agrees with $(\eta_Y\times X\cap Y, \ll_Y)$ and so
 the hypothesis
needed for Lemma \ref{successororder} is always present. The final $\kappa\times\kappa^+$-order
is obtained by applying Lemma \ref{limitorder} to $\nu=\mu$.

\end{proof}

\vskip 26pt

\section{Gaps, nonreflection and incompactness}\par
\vskip 13pt
 A natural phenomenon which accompanies constructions 
along $2$-cardinals are gaps, i.e.,
the fact that for given two cardinal invariants $\phi,\psi$
(i.e, some general way of assigning cardinal numbers
to structures of the type in question e.g. 
the width and the number of branches of trees or
the tightness and the character of points in
compact spaces) there is a  cardinal $\kappa$ such that
$\phi({\mathcal A})<\kappa<\psi({\mathcal A})$, where ${\mathcal A}$ 
is the constructed object.\par
A natural ``scenario" for  constructions of
objects exemplifying gaps goes as follows. In the 
inductive step we preserve enough properties or auxiliary 
objects so that
the invariant $\phi$ stays below $\kappa$, by 
preservation argument. On the other hand the inductive step
guarantees that properties or auxiliary objects 
involved in the definition of the invariant $\psi$ are 
not preserved. The number of constructions as 
above in combinatorial set theory is very large. Applications beyond set theory include
a construction of a large L-space (see \cite{hajnaljuhasz2}), a 
construction of a large Lindel\"of space with points $G_\delta$
see \cite{isaac}, \cite{vellemandiam}  (originally in  \cite{shelah1}), 
a Banach space of density $\omega_2$ without uncountable biorthogonal systems (\cite{christina}), 
etc.

As an example of an object
exhibiting  a gap, let us construct a Kurepa tree 
using a $(\kappa,\kappa^+)$-cardinal. The constructed Kurepa tree
has many additional properties, to mention only,
a nice well-ordering of branches. Note that P. Komjath 
has shown that a Kurepa tree with many properties of the
tree below may exist in a model where there is 
no morass and where  even $\square_{\omega_1}$ fails
(under some large cardinal assumption, see
\cite{komjath}). \par 
\vskip 13pt 
\begin{theorem}\label{kurepa}\cite{devlin2} Let $\kappa$ be a regular cardinal. If there is  a 
$(\kappa,\kappa^+)$-cardinal 
 then there is a 
  $\kappa$-Kurepa tree.
\end{theorem}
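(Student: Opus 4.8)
The plan is to read the Kurepa tree directly off the coherent $\mu$-sequences, using the \emph{position} of an ordinal inside the elements of $\mu$. For $\alpha<\kappa^+$ and $\xi<\kappa$ I set $p_\xi(\alpha)=ordtp(\mu_\xi(\alpha))$, the index of $\alpha$ in the increasing enumeration of the (by the coherence lemma \ref{coherence}, essentially unique) rank-$\xi$ element containing it. This is total by Lemma \ref{zerorank} together with the density lemma \ref{density}, and $p_\xi(\alpha)<\theta_\xi$, where $\theta_\xi<\kappa$ denotes the common order type of the rank-$\xi$ elements of $\mu$ (homogeneity, \ref{morass}(3)). I would then take the tree $T$ to have underlying set $\{(\xi,p_\xi(\alpha)):\xi<\kappa,\ \alpha<\kappa^+\}$ with level function $(\xi,\gamma)\mapsto\xi$, and declare $(\xi,\gamma)<_T(\xi',\gamma')$ iff $\xi<\xi'$ and there is $\alpha$ with $p_\xi(\alpha)=\gamma$ and $p_{\xi'}(\alpha)=\gamma'$. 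Each $\alpha$ then determines a chain $b_\alpha=\langle(\xi,p_\xi(\alpha)):\xi<\kappa\rangle$ which by construction meets every level, and these will furnish the cofinal branches.

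The key lemma, and the step I expect to be the main obstacle, is the coherence of positions across levels: if $\xi<\xi'$ and $p_{\xi'}(\alpha)=p_{\xi'}(\alpha')$, then $p_\xi(\alpha)=p_\xi(\alpha')$. I would prove this by choosing rank-$\xi'$ elements $X\ni\alpha$ and $X'\ni\alpha'$ and the order isomorphism $f=f_{X'X}$ supplied by homogeneity \ref{morass}(3). Since $\alpha$ and $\alpha'$ occupy the common position $p_{\xi'}(\alpha)$ in $X$ and in $X'$, we get $f(\alpha)=\alpha'$; and because $f$ carries $\mu|X$ bijectively onto $\mu|X'$ preserving rank, it sends a rank-$\xi$ element $X_\xi\subset X$ containing $\alpha$, obtained from the localization lemma \ref{localization}, to a rank-$\xi$ element $f[X_\xi]$ containing $\alpha'$, order-isomorphically; hence $p_\xi(\alpha)=ordtp(X_\xi\cap\alpha)=ordtp(f[X_\xi]\cap\alpha')=p_\xi(\alpha')$. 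This lemma immediately gives that below any node $(\xi',\gamma')$ there is exactly one node at each level $\xi<\xi'$, so the set of predecessors of every node is a chain of order type its level; combined with the (routine) transitivity check, which again rests on this same uniqueness, it makes $<_T$ a tree order of height $ht(\mu)=\kappa$ by Lemma \ref{musequencelemma}.

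It then remains to verify the two quantitative Kurepa requirements. The levels are small because level $\xi$ is contained in $\{\xi\}\times\theta_\xi$ and $|\theta_\xi|<\kappa$. For the branches I would show that distinct $\alpha<\alpha'$ yield distinct $b_\alpha\neq b_{\alpha'}$: by directedness and the density lemma \ref{density} there is a single rank-$\xi$ element $X_\xi$ containing both $\alpha$ and $\alpha'$, whence $X_\xi\cap\alpha\subsetneq X_\xi\cap\alpha'$ and therefore $p_\xi(\alpha)<p_\xi(\alpha')$, so the two branches already split at level $\xi$. As each $b_\alpha$ is a cofinal branch and $\alpha$ ranges over $\kappa^+$, the tree $T$ carries at least $\kappa^+$ cofinal branches and is thus a $\kappa$-Kurepa tree.
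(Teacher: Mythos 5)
Your proposal is correct and is essentially the paper's own proof: your position function $p_\xi(\alpha)=ordtp(\mu_\xi(\alpha))$ is exactly the paper's $f_\alpha(\xi)$, your key coherence lemma is precisely the homogeneity argument the paper invokes to show the levels have size $<\kappa$ (there, in the form $f_{XY}(\beta)=\alpha$ implies $f_\alpha\restriction\xi=f_\beta\restriction\xi$), and your branch-distinctness argument is identical. Your tree on pairs $(\xi,p_\xi(\alpha))$ is just an isomorphic repackaging (up to a shift of levels) of the paper's tree of restrictions $\{f_\alpha\restriction\xi:\ \alpha<\kappa^+,\ \xi<\kappa\}$ ordered by end-extension, at the mild cost of having to verify transitivity and well-definedness of the order by hand.
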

\begin{proof} Let $\mu$ be a $(\kappa,\kappa^+)$-cardinal.
Define $F\subset\kappa^{\kappa}$ as follows: 
$F=\{f_\alpha:\ \alpha<
\kappa^+\}$ where 
$$ f_\alpha(\xi)=ordtp(\mu_\xi(\alpha)),$$
where $ (\mu_\xi(\alpha))_{\xi<\kappa}$ is the $\mu$-sequence at $\alpha$.\par
First note that all $f_\alpha$'s are different. 
Let $\alpha\not=\beta$,
take $X\in{\mu}$ such that $\alpha,\beta\in X$ 
(there exists such an $X$
since ${\mu}$ is directed and 
covers $\kappa^+$, see definition 1.1. (6)), then
$ordtp(X\cap\alpha)\not=ordtp( X\cap\beta)$, 
this implies
that $f_\alpha(rank(X))\not=f_\beta(rank(X))$.\par
Now prove that for every $\xi<\kappa $ the cardinality of
$F\restriction\xi=\{f_\alpha\restriction\xi:\ 
\alpha<\kappa^+\}$ is less than 
$\kappa$. Take $X\in{\mu}$ such that
$rank(X)=\xi$. We will show that $F\restriction\xi
\subseteq\{f_\alpha
\restriction\xi:\
\alpha\in X\}$. 
This will suffice since $|X|<\kappa$. \par
Let us take arbitrary $\beta\in\kappa^+$, we can  find $Y\in{\mu}$
such that $rank(Y)=\xi$ and $\beta\in Y$. 
Since ${\mathcal \mu}\restriction X$ and
${\mathcal \mu}\restriction Y$ are isomorphic 
by homogeneity of ${\mu}$, there is $\alpha\in X$ such that
$f_{XY}(\beta)=\alpha$, then $f_\alpha\restriction\xi=
f_\beta\restriction\xi$ by the homogeneity \ref{morass}.\par
Hence  
$\{f_\alpha\restriction\xi:\ \alpha<\kappa^+,\ 
\xi<\kappa\}$ with the end-extension of functions is a subtree of $\kappa^{<\kappa}$ of height
$\kappa$  with levels of sizes $<\kappa$ 
with  at least $\kappa^+$-many branches of length $\kappa$, i.e.,  it is
a $\kappa$-Kurepa tree.
\end{proof}

\begin{theorem}\label{kurepastat}Suppose that $\kappa$ is a regular cardinal and that ${\mu}$ is a stationary 
$(\kappa,\kappa^+)$-cardinal, then there is a $\kappa$-Kurepa
tree with exactly $\kappa^+$-many branches of length $\kappa$ that does not 
contain a $\kappa$-Aronszajn subtree.
\end{theorem}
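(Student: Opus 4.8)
The plan is to build on the Kurepa tree $T=\{f_\alpha\restriction\xi:\ \alpha<\kappa^+,\ \xi<\kappa\}$ constructed in Theorem \ref{kurepa}, and to refine the analysis using the stationarity of $\mu$ to control the number of branches exactly and to exclude $\kappa$-Aronszajn subtrees. First I would recall that every branch of length $\kappa$ through $T$ is of the form $b_\alpha=\{f_\alpha\restriction\xi:\ \xi<\kappa\}$ for some $\alpha<\kappa^+$: indeed, a cofinal branch $b$ gives a coherent sequence of order types which, by the coherence of $\mu$-sequences (Lemma \ref{musequencecoherence}) together with the density and homogeneity of $\mu$, must arise from an actual $\mu$-sequence at a unique ordinal $\alpha$. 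Since distinct $\alpha$ yield distinct $f_\alpha$ (shown in Theorem \ref{kurepa}), the map $\alpha\mapsto b_\alpha$ is a bijection and $T$ has exactly $\kappa^+$ cofinal branches.

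The new ingredient is the \emph{no $\kappa$-Aronszajn subtree} clause, and this is where I expect the stationarity of $\mu$ to be used. Suppose toward a contradiction that $S\subseteq T$ is a $\kappa$-Aronszajn subtree: a subtree of height $\kappa$ with all levels of size $<\kappa$ and \emph{no} cofinal branch. The strategy is to reflect $S$ down through a suitable elementary submodel whose intersection with $\kappa^+$ lies in $\mu$. Concretely, I would take a large enough $H(\theta)$, build an elementary submodel $M\prec H(\theta)$ with $\mu,S,T\in M$, $|M|<\kappa$, $\delta=M\cap\kappa\in\kappa$, and — crucially, using that $\mu$ is stationary in $\wp_\kappa(\kappa^+)$ — arrange that $M\cap\kappa^+=X_0\in\mu$. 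By Lemma \ref{musequencemodel}, $\mathrm{rank}(X_0)=\delta$ and $M\cap\alpha=\mu_\delta(\alpha)$ for every $\alpha\in M\cap\kappa^+$.

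The key computation is then to locate a node of $S$ at level $\delta$ that sits above a cofinal-in-$S$ set of nodes inside $M$. For $\alpha=X_0$ itself (an ordinal of cofinality $\kappa$, or more precisely an ordinal whose $\mu$-sequence reaches level $\delta$), the value $f_{\alpha}\restriction\delta$ computed from $\mu_\delta(\alpha)$ provides a node at level $\delta$; elementarity of $M$ guarantees that the nodes of $S$ lying in $M$ are exactly those of $T\cap M$ below level $\delta$, and the coherence Lemma \ref{musequencecoherence} guarantees that $f_{X_0}\restriction\delta$ end-extends $f_\beta\restriction\xi$ for all the relevant $\beta\in M$, $\xi<\delta$. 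The point is that if $S$ met cofinally many levels below $\delta$ inside $M$ — which it must, since $S$ has height $\kappa$ and $M$ is elementary — then these nodes line up coherently into a single node of $T$ at level $\delta$ which extends to a genuine cofinal branch $b_{X_0}$ of $T$; and elementarity forces this branch to pass through $S$ cofinally, contradicting that $S$ has no cofinal branch.

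The main obstacle I anticipate is the reflection/coherence argument in the last step: one must verify carefully that the nodes of $S$ inside $M$ really do cohere into the single branch $b_{X_0}$ and that the branch remains \emph{inside} $S$ (not merely inside $T$). This requires combining the exact-branch analysis of the first paragraph with Lemma \ref{musequencemodel}(1), so that a cofinal chain in $S\cap M$ is identified with an initial segment of $b_{X_0}$, and then using elementarity to pull the existence of a cofinal branch back up to $H(\theta)$. Once that identification is in place, the contradiction with Aronszajn-ness is immediate, and the stationarity hypothesis is exactly what makes the model $M$ with $M\cap\kappa^+\in\mu$ available.
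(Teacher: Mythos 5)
Your overall framework --- reuse the tree $T$ from Theorem \ref{kurepa} and reflect it through an elementary submodel $M$ with $M\cap\kappa^+\in\mu$ --- is the paper's, but both halves of your argument have genuine gaps. The first is the branch-counting step: you claim that every $\kappa$-branch of $T$ equals some $b_\alpha$ using only coherence, density and homogeneity, explicitly reserving stationarity for the Aronszajn clause. This cannot work, because those lemmas hold for an \emph{arbitrary} $(\kappa,\kappa^+)$-cardinal, and for an arbitrary one the claim is false: for $\kappa=\omega$ a $(\omega,\omega_1)$-cardinal exists in ZFC (\cite{vellemanzfc}), the resulting $T$ has finite levels, so its branch space is compact metrizable and has either countably many or continuum many points; since it has at least $\omega_1$ branches, under $\neg$CH it has strictly more than $\kappa^+$ branches, with all of your cited lemmas intact. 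A cofinal branch picks some $\alpha_\xi$ at each level $\xi$, and nothing in coherence forces these to stabilize to a single $\alpha$. The paper proves this half by the very stationarity argument you postpone: put the branch $b$ into $M$ with $X_0=M\cap\kappa^+\in\mu$ and $rank(X_0)=\delta=M\cap\kappa$; by the observation $F\restriction\delta=\{f_\alpha\restriction\delta:\ \alpha\in X_0\}$ the node $b\restriction\delta$ equals $f_\alpha\restriction\delta$ for some $\alpha\in M$, and since $\delta\notin M$, the model $M$ cannot exhibit a level where $b$ and $f_\alpha$ differ, so $M\models b=f_\alpha$ and hence $b=f_\alpha$.

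The second gap is the reflection step in your Aronszajn argument. The nodes of $S$ lying in $M$ are exactly the nodes of $S$ at levels below $\delta$, and these form a \emph{tree} of height $\delta$, not a chain; they do not ``line up coherently into a single node of $T$ at level $\delta$.'' Moreover your $b_{X_0}$ is ill-defined: $X_0=M\cap\kappa^+$ is not an ordinal, and if you replace it by $\gamma=\sup(M\cap\kappa^+)$, the branch $b_\gamma$ does not even lie above the nodes $f_\alpha\restriction\xi$ with $\alpha\in M\cap\kappa^+$; indeed by Lemma \ref{musequencecoherence}, whenever $\alpha\in\mu_\eta(\gamma)$ one has $f_\alpha(\eta)=ordtp(\mu_\eta(\gamma)\cap\alpha)<ordtp(\mu_\eta(\gamma))=f_\gamma(\eta)$, so $f_\gamma$ splits away from every such $f_\alpha$. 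The missing idea, which is how the paper argues, is to start from a \emph{single} node $t\in S$ of height greater than $\delta$ (it exists since $S$ has height $\kappa$): its predecessors below level $\delta$ form a chain contained in $S$ by downward closure, and by the same observation $F\restriction\delta=\{f_\alpha\restriction\delta:\ \alpha\in X_0\}$ one gets $t\restriction\delta=f_\alpha\restriction\delta$ for some $\alpha\in M\cap\kappa^+$. Then, since $\delta\notin M$, the set $\{f_\alpha\restriction\beta:\ \beta<\kappa\}\cap S$ (an element of $M$) cannot be bounded below any level in $M\cap\kappa$, so $M$ thinks it has size $\kappa$, hence it really does, and $S$ has a $\kappa$-branch --- the desired contradiction.
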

\begin{proof}   Our
$\kappa$-Kurepa 
tree $T$ with the above properties is the same as in the proof of
Theorem \ref{kurepa}, i.e., $T=\{f_\alpha\restriction\xi:\ \alpha<\kappa^+,\
\xi<\kappa\}$, so we adopt the notation of this proof and we 
will use the following observation made during the course of 
that proof
$$ F\restriction\xi=\{f_\alpha\restriction\xi:\ \alpha\in
X\}\leqno *)$$
for any $X\in{\mu}$ such that $rank(X)=\xi$.\par
a) Any branch of length $\kappa$ of $T$ is of the form $f_\alpha$
for some $\alpha<\kappa^+$.\par
\noindent Let $b$ be a branch of length $\kappa$ through $T$. Find an
elementary  submodel $M\prec
H(\kappa^{++})$ such that $M\cap \kappa^+\in {\mu}$
and  that $T,{\mu},b\in M$, $|M|<\kappa$. 
This is possible since ${\mu}$ is 
stationary in $\wp_\kappa(\kappa^+)$. 
Let $\xi=rank(M\cap\kappa)$. 
Let $f_\alpha$ be such
that $\alpha\in M\cap\kappa^+$ and 
$\{f_\alpha\restriction\eta: \eta<\xi\}=b\restriction\xi$
(by *)). Then since $\xi\not\in M$ by \ref{musequencemodel} (2), 
$M\models b=f_\alpha$, so
$H(\kappa^{++})\models b=f_\alpha$ so $b=f_\alpha$.\par
b) $T$ does not contain a $\kappa$-Aronszajn subtree.\par
\noindent Suppose $A$ is a subtree of $T$ of height $\kappa$. Let
$M\prec H(\kappa^{++})$ be a model  of cardinality less than $\kappa$
such that $T,{\mu},A\in M,\
M\cap\kappa^+\in{\mu}$. Let $t\in A$ be such that 
$ht(t)>rank(M\cap\kappa^+)=\xi$. By *) there is $\alpha\in M\cap\kappa^+$
such that
$f_\alpha\restriction\xi=t\restriction\xi$, so since $\xi\not\in M$,
$M\models (\{f_\alpha|\beta: \beta<\kappa\}\cap A\ is\ of\ size\ \kappa)$ so 
$\{f_\alpha|\beta: \beta<\kappa\}\cap A$ is
of size $\kappa$, hence $A$ has a $\kappa$-branch, so $A$ is 
not a $\kappa$-Aronszajn subtree.
\end{proof}

The fact that 
the statement of the theorem above   holds in
$L$ was originally proved in \cite{devlin1} and is
due to Jensen.  Note the 
inductive character of the above construction.
At the stage $X\in \mu$, we are given an initial
fragment of a Kurepa tree. The {\it coherence} of a morass
guarantees that, different interpretations of 
the set of branches of this fragment of the tree are
consistent. This is the case
when a recursive construction as in the previous section can be easily made explicit. We extend the 
tree at successor stages, splitting a branch
if it corresponds to an element from the {\it tail}
and leaving a branch non-split if it is in the {\it head}
of the $\Delta$-system given by amalgamation pair
at the considered rank.  Also, the way the gap between
the number of branches and the size of the levels is
obtained is evident: at the stage of successor rank,
we preserve the level but increase the set of branches.

\noindent Another natural phenomenon occurring while
 sophisticated stepping up principles are allowed to work  is 
the nonreflection, i.e., the nonexistence of a substructure
to which a given structure reflects its given properties, e.g., 
a nonmetrizable space all of whose small subspaces are metrizable.
The small size of initial fragments of the construction
is responsible for obtaining a given property in 
substructures of size less than $\kappa$. The pressing down
lemma applied to e.g. a stationary $2$-cardinal proves that 
the entire structure does not have a given property $P$. 
So, often the stationary nonreflection is the underlying one, hence
in this context it is natural to consider stationary stepping up tools.
 (see \cite{stevorado}, \cite{semi}, \cite{kur} or in the $\square_\kappa$ context
e.g. \cite{hajnaljuhasz1}).

\begin{proposition}[\cite{semi}]
 Let $\mu$ be a $(\kappa,\kappa^+)$-cardinal,
 then for no proper subset $A\subset\kappa^+$ of size 
at least $\kappa$ 
the set $\{X\in\mu:\  X\subset A\}$  is stationary in
$\wp_{\kappa}(A)$.
\end{proposition}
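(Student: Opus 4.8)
The plan is to show that if $A \subsetneq \kappa^+$ has size at least $\kappa$, then the family $\mu_A = \{X \in \mu : X \subset A\}$ is \emph{non}-stationary in $\wp_\kappa(A)$ by exhibiting a club $C \subseteq \wp_\kappa(A)$ disjoint from $\mu_A$. Since $A$ is a proper subset, fix some $\gamma \in \kappa^+ \setminus A$. The key structural fact I would exploit is Lemma \ref{musequencelemma}: the $\mu$-sequence $(\mu_\xi(\gamma))_{\xi < \kappa}$ at $\gamma$ is a continuous non-decreasing chain whose union is $\gamma$, and by Lemma \ref{musequencecoherence} these sequences cohere, so membership in $\mu$ is tightly constrained by initial segments of ordinals.

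\textbf{Main idea.} The obstruction to $X \subset A$ lying in $\mu$ comes from coherence: any $X \in \mu$ is, by Lemma \ref{coherence} and the $\mu$-sequence machinery, an end-extension-compatible piece of a single coherent decomposition of $\kappa^+$. I would set up a club $C$ by choosing, via the usual closing-off argument, sets $Y \in \wp_\kappa(A)$ that are closed under the operation $\alpha \mapsto \mu_\xi(\alpha)$ for all $\xi$ below some threshold, and that contain ``witnesses'' forcing a failure of the order-type homogeneity (Definition \ref{morass}(3)) required of genuine $\mu$-elements. Concretely, because $A$ omits $\gamma$ but has size $\kappa$, the $\mu$-sequence at a suitable $\alpha > \gamma$ (or at $\gamma$ itself) threads through ordinals both inside and outside $A$; a set $Y \subset A$ that is $\mu$-closed in the relevant sense cannot have the order type dictated by homogeneity for an element of $\mu$ of the matching rank, since the ambient $\mu$-sequence skips over $\gamma \notin A$.

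\textbf{Execution.} First I would form an elementary submodel or, more elementarily, a continuous increasing chain $(Y_\nu)_{\nu<\kappa}$ cofinal in $\wp_\kappa(A)$ whose members are closed under all the $\mu$-sequence maps and contain $\gamma' := \sup(\gamma \cap A)$ plus enough of the coherent skeleton. The club $C$ consists of such closure points $Y$. Then I would argue: if some $Y \in C$ were in $\mu$, then by Lemma \ref{zerorank}, Lemma \ref{coherence}, and the homogeneity clause, $Y$ would have to be $\mu_\xi(\alpha) = X_\xi \cap \alpha$ for an appropriate $\alpha$ and rank $\xi$; but the coherence lemma forces $X_\xi \cap \alpha$ to contain every ordinal of $\alpha$ below its sup that lies in the coherent chain, and in particular to contain $\gamma \notin A$, contradicting $Y \subset A$. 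This gives $C \cap \mu_A = \emptyset$.

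\textbf{The hard part} will be making precise which coherent ordinal the closure forces into $Y$ and verifying it must be $\gamma$ (or some fixed element of $\kappa^+ \setminus A$). The delicacy is that $A$ could omit only very high ordinals, so one must pick the testing ordinal $\alpha$ large enough (of cofinality $\kappa$, as in Proposition \ref{cofinality}) that its $\mu$-sequence genuinely probes the gap left by $A$, while keeping the closure sets inside $A$. I expect the coherence lemma, together with the ``end-extension'' observation following Lemma \ref{stationarycoding}, to be exactly the tool that converts ``$Y$ is $\mu$-closed and omits a coherent ordinal'' into ``$Y \notin \mu$.''
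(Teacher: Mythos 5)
There is a genuine gap, and it sits exactly at your punchline. Your argument rests on the claim that if a closure point $Y\subseteq A$ of your club were in $\mu$, then coherence would force $\gamma\in Y$ for the fixed $\gamma\in\kappa^+\setminus A$. Nothing supports this: the coherence lemma \ref{coherence} is an \emph{agreement} statement (two same-rank elements of $\mu$ agree below any common member); it never forces a prescribed ordinal \emph{into} an element of $\mu$. The intermediate step is also unjustified: a general $Y\in\mu$ need not be of the form $X_\xi\cap\alpha$ with $\alpha\in X_\xi\in\mu$ of the same rank, and even when it is, $X_\xi\cap\alpha$ is just a set of size $<\kappa$, not something containing ``every ordinal of the coherent chain.'' Worse, the club you propose is the wrong tool in a precise sense: by the localization lemma \ref{localization}, if $Y\in\mu$, $\alpha\in Y$ and $\xi\leq rank(Y)$, then $\mu_\xi(\alpha)\subseteq Y$. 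So closure under the maps $\alpha\mapsto\mu_\xi(\alpha)$ for $\xi$ below a fixed threshold $\eta$ is a property that every element of $\mu$ of rank $\geq\eta$ satisfies automatically; your club of closure points therefore \emph{contains} (rather than excludes) all members of $\{X\in\mu: X\subseteq A\}$ of rank above the threshold. To repair this, the threshold would have to grow with the set, i.e., you would need a diagonal intersection over $\alpha\in Y$, and to control that you need a bound on the ranks of the members of $\mu$ inside $A$ that depends on a point of the set itself --- which is exactly a regressive function plus the pressing down lemma. That ingredient is absent from your proposal, and it is the whole content of the proof.

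The missing idea is a positional coding of the omitted ordinal. Fix $\beta\in\kappa^+\setminus A$ and define, for $X\in\mu$ with $X\subseteq A$, a regressive value $f(X)\in X$ by requiring $ordtp(X\cap f(X))=ordtp(Y\cap\beta)$, where $Y\in\mu$ contains $\beta$ and $rank(Y)=rank(X)$; this is well defined by Lemma \ref{zerorank}, the density lemma \ref{density} and the coherence lemma \ref{coherence}. The key point is that each fiber $f^{-1}(\{\alpha\})$ has ranks bounded by the rank of any $Z\in\mu$ containing both $\alpha$ and $\beta$: if $rank(X)>rank(Z)$ and $f(X)=\alpha$, extend $Z$ by Lemma \ref{density} to $Y$ with $rank(Y)=rank(X)$ and $\alpha,\beta\in Y$; then $X\cap\alpha=Y\cap\alpha$ by coherence, while $ordtp(Y\cap\alpha)\neq ordtp(Y\cap\beta)$, contradicting the defining property of $f(X)$. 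Since a subfamily of $\mu$ of bounded rank cannot even be cofinal in $\wp_\kappa(A)$, each fiber is non-stationary, and the pressing down lemma shows the domain is non-stationary. If you insist on exhibiting an explicit club disjoint from $\{X\in\mu:X\subseteq A\}$, it falls out of this argument as a diagonal intersection of order-type conditions (roughly, $Y$ is in the club iff for every $\alpha\in Y$ the order type of $Y$ exceeds the common order type of the elements of $\mu$ whose rank is the least rank of a member of $\mu$ containing both $\alpha$ and $\beta$) --- not as a club of closure points under the $\mu$-sequence maps.
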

\begin{proof} We will build a regressive
function $f:\{X\in\mu:\ X\subseteq A\}\rightarrow
A$ such that for each $\alpha\in A$ there is a bound
in $\kappa$ for ranks of all elements of $\mu$ in 
the preimage of $f^{-1}(\{\alpha\})$. This will
be sufficient, since  for $\kappa$ regular,
no well-founded cofinal set
in $\wp_\kappa(A)$ can have bounded ranks 
(consider the union of representatives of each rank). 
Hence the function as above will have nonstationary
preimages of singletons, thus by the pressing down lemma
(see \cite{baumgartnerhandbook}) its domain is nonstationary.

First choose $\beta\in \kappa^+$ such that $\beta\not\in 
A$, then
$f(X)\in X$ is such that
$$ordtp(X\cap f(X))=ordtp(Y\cap\beta)$$
where $Y\in\mu$  is such that $\beta\in Y, rank(Y)=rank(X)$.
Note that $f$ is well-defined. This follows from the
density lemma \ref{density}  and the coherence lemma \ref{coherence}
and Lemma \ref{zerorank}.

Suppose that $\alpha\in A$. Then there is $Z\in \mu$ 
such that $\alpha,\beta\in Z$, we will prove that
$rank(Z)$ bounds ranks of elements 
in $f^{-1}(\{\alpha\})$. Let $f(X)=\alpha$, $rank(X)>rank(Z)$,
so by the density lemma \ref{density}
there is $Y\in \mu$ of the same rank as the rank of $X$ such that $\alpha,\beta\in Y$,
then obviously $ordtp(\alpha\cap Y)\not=ordtp(\beta\cap Y)$
and $X\cap\alpha=Y\cap \alpha$ by the coherence lemma \ref{coherence}, so
$ordtp(\alpha\cap X)\not=ordtp(\beta\cap Y)$, but
this contradicts the fact that $f(X)=\alpha$.  \end{proof}

\begin{corollary}
Let $\mu$ be a $(\kappa,\kappa^{+})$-cardinal,
then  $\mu$ is not a $club$ subset of $\wp_\kappa(\kappa^+)$.
\end{corollary}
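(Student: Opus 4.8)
The plan is to argue by contradiction, leveraging the preceding Proposition on nonreflection. Suppose toward a contradiction that $\mu$ is a club subset of $\wp_\kappa(\kappa^+)$, so that in addition to being cofinal (which we already know from Proposition \ref{cofinality}) it is closed under unions of $\subseteq$-increasing chains of length less than $\kappa$. The idea is to manufacture a single proper subset $A\subset\kappa^+$ of size $\kappa$ on which the restricted family $\{X\in\mu:\ X\subseteq A\}$ is itself club, hence stationary in $\wp_\kappa(A)$, directly contradicting the previous Proposition.

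To build $A$, I would recursively construct a continuous, strictly $\subseteq$-increasing chain $(X_\xi)_{\xi<\kappa}$ of elements of $\mu$. At a successor stage, using the cofinality of $\mu$ I pick $\gamma_\xi\in\kappa^+\setminus X_\xi$ (possible since $|X_\xi|<\kappa<\kappa^+$) and choose $X_{\xi+1}\in\mu$ with $X_\xi\cup\{\gamma_\xi\}\subseteq X_{\xi+1}$; this keeps the chain strictly increasing. At a limit stage $\lambda<\kappa$ I set $X_\lambda=\bigcup_{\xi<\lambda}X_\xi$, which lies in $\mu$ by closure under chains of length $\lambda<\kappa$ and still has size less than $\kappa$. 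Put $A=\bigcup_{\xi<\kappa}X_\xi$. Since the chain is strictly increasing of length $\kappa$ and each term has size $<\kappa$, regularity of $\kappa$ gives $|A|=\kappa$; in particular $A\neq\kappa^+$, so $A$ is a proper subset of the required size.

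Next I would verify that $\{X\in\mu:\ X\subseteq A\}$ is club in $\wp_\kappa(A)$. For cofinality, any $Y\in\wp_\kappa(A)$ satisfies $Y\subseteq\bigcup_{\xi<\kappa}X_\xi$ with $|Y|<\kappa$, so by regularity of $\kappa$ and continuity of the chain we get $Y\subseteq X_\xi$ for a single $\xi<\kappa$, and $X_\xi\in\mu$ with $X_\xi\subseteq A$. For closure, the union of any $\subseteq$-increasing chain of length less than $\kappa$ drawn from this family again has size less than $\kappa$, lies in $\mu$ by the assumed closure of $\mu$, and is contained in $A$. Thus the restricted family meets both requirements and is in particular stationary in $\wp_\kappa(A)$.

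Finally, $A$ is a proper subset of $\kappa^+$ of size at least $\kappa$, so the previous Proposition asserts that $\{X\in\mu:\ X\subset A\}$ is not stationary in $\wp_\kappa(A)$; note that for $X\in\wp_\kappa(A)$ we have $X\subset A$ strictly, since $|X|<\kappa=|A|$, so this is the very same family. This contradicts the conclusion of the previous paragraph, and the contradiction shows $\mu$ cannot be club. The step I expect to require the most care is the verification that the restriction of the club $\mu$ to $\wp_\kappa(A)$ is genuinely club: the point is to invoke closure of $\mu$ only at the limit levels below $\kappa$ (so that the union $A$ at level $\kappa$ is deliberately allowed to escape $\wp_\kappa(\kappa^+)$), and to ensure that $A$ is approachable from inside by members of $\mu$, which is exactly what the continuous chain guarantees.
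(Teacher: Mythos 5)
Your proof is correct and is essentially the paper's own route: the corollary is meant to follow from the preceding nonreflection proposition exactly as you argue, by taking the union $A$ of a strictly increasing continuous $\kappa$-chain from the (assumed) club $\mu$, noting that $A$ is then a proper subset of $\kappa^+$ of size $\kappa$ on which $\{X\in\mu:\ X\subset A\}$ contains a club of $\wp_\kappa(A)$ and is therefore stationary there, contradicting the proposition. One trivial remark: in your cofinality verification only monotonicity of the chain and regularity of $\kappa$ are needed (continuity is what you correctly use for closure), but this does not affect correctness.
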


The last example in this section concerns the Hausdorff gap and its generalizations to
higher cardinals whose consistency is originally proved in \cite{negrepontis}.
Hausdorff gaps can be considered as objects exhibiting nonreflection. The entire
two chains of regular length $\kappa$ cannot be separated, but this property does not reflect to chains of
smaller sizes (included in initial chains) which
can be separated.
Below in the case of $\kappa=\omega$ we obtain an explicit definition of the classical Hausdorff gap
in ZFC because $(\omega,\omega_1)$-cardinals exist in ZFC as proved in \cite{vellemanzfc}.

\begin{theorem}[\cite{vellemanzfc}] Suppose that $\kappa$ is a regular cardinal and that
there exists  a $(\kappa,\kappa^+)$-cardinal $\mu\subseteq \wp_\kappa(\kappa^+)$.
Then there are $(A_\alpha)_{\alpha<\kappa^+}, (B_\alpha)_{\alpha<\kappa^+}\subseteq \wp(\kappa)$
such that
\begin{enumerate}
\item $A_\alpha\cap B_\alpha=\emptyset$ for each $\alpha<\kappa^+$,
\item $|A_\alpha\setminus A_\beta|, |B_\alpha\setminus B_\beta|<\kappa$ for each $\alpha<\beta<\kappa^+$,
\item There is no $C\subseteq\kappa$ such that $|A_\alpha\setminus C|, |B_\alpha\cap C|<\kappa$
for each $\alpha<\kappa^+$.
\end{enumerate}
\end{theorem}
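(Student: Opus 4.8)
The plan is to define the sets $A_\alpha$ and $B_\alpha$ explicitly from the $\mu$-sequences, exploiting the coherence already established in Lemma \ref{musequencecoherence}. For each $\alpha < \kappa^+$, the $\mu$-sequence $(\mu_\xi(\alpha))_{\xi<\kappa}$ gives a continuous increasing decomposition of $\alpha$, and the ordinals $ordtp(\mu_\xi(\alpha)) = f_\alpha(\xi)$ (exactly the Kurepa-tree functions of Theorem \ref{kurepa}) encode how $\alpha$ sits inside the morass at each rank. I would use these order-type values to split $\kappa$ into two pieces. A natural candidate: fix a pairing and let $A_\alpha$ and $B_\alpha$ record, at each rank $\xi$ where the amalgamation structure distinguishes ``head'' from ``tail,'' whether $\alpha$ lies in the left or the right copy $X_1$ or $X_2$ when passing through a splitting node $X = X_1 * X_2$. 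Concretely, since every $\beta \in \mu_\xi(\alpha)$ has $\mu_\xi(\beta) = \mu_\xi(\alpha) \cap \beta$ by coherence, the relative position of $\alpha$ over $\beta$ stabilizes, and this is what will drive both the ``almost inclusion'' (2) and the ``unseparability'' (3).

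**The coherence conditions (1) and (2).**

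First I would arrange $A_\alpha$ and $B_\alpha$ to be disjoint by construction, making (1) immediate. For (2), the key is that whenever $\alpha < \beta$, there is an $X \in \mu$ of some rank containing both, and by the coherence lemma \ref{coherence} we have $X \cap \alpha = X \cap \beta \cap \alpha$, so the $\mu$-sequences of $\alpha$ and $\beta$ agree below that rank: $\mu_\xi(\alpha) = \mu_\xi(\beta) \cap \alpha$ for all sufficiently large $\xi$. This forces $f_\alpha \restriction \xi = f_\beta \restriction \xi$ to be comparable in the tree ordering only up to the point where $\alpha$ and $\beta$ separate. The bounded-difference property $|A_\alpha \setminus A_\beta| < \kappa$ should then follow because the two encodings can differ only at the finitely many (or fewer-than-$\kappa$) ranks below the rank at which $\alpha$ first enters a common block with $\beta$; Lemma \ref{musequencecofcof} is the tool that bounds this, since it says initial segments of the $\mu$-sequence are captured at ranks strictly below any given limit. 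I expect (2) to reduce to a clean application of \ref{musequencecofcof} together with the monotonicity from Lemma \ref{musequencelemma}.

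**The unseparability (3) — the main obstacle.**

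The hard part will be (3): showing no single $C \subseteq \kappa$ separates all the $A_\alpha$ from all the $B_\alpha$. This is where stationarity or a reflection argument is typically needed in the Hausdorff-gap setting, but here $\kappa$ itself provides the decomposition, so I would instead argue directly by a counting/diagonalization against $C$. Suppose such a $C$ existed; for each $\alpha$ the sets $A_\alpha \setminus C$ and $B_\alpha \cap C$ have size $< \kappa$, so there is a rank $\xi(\alpha) < \kappa$ past which $C$ correctly separates the encoding of $\alpha$. Since $\kappa$ is regular and there are $\kappa^+$ many $\alpha$, by pigeonhole uncountably (in fact $\kappa^+$-many) values $\alpha$ share the same threshold $\xi$ and the same restriction $C \cap \mu_\xi(\cdot)$-pattern; but the homogeneity \ref{morass}(3) and the splitting behavior at a node $X = X_1 * X_2$ of rank $\geq \xi$ then produce two ordinals whose encodings $A$-part and $B$-part coincide below $\xi$ yet demand opposite membership in $C$ at a single coordinate, a contradiction. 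The delicate point is choosing the encoding so that this forced collision happens at an \emph{identifiable} coordinate of $\kappa$; I would use the admissibility-style infinitude of the fibers, exactly as in Definition \ref{order}, to guarantee that at each splitting rank infinitely many coordinates are available to be flipped, so that the pressing-down/pigeonhole step cannot be evaded by $C$. Making this collision argument precise — pinning down which coordinate of $\kappa$ is forced into both $A_\alpha \setminus C$ and $B_\beta \cap C$ — is the genuine content of the proof, and it is where the coherence lemma must be invoked one last time to guarantee the two encodings literally agree on that coordinate below rank $\xi$.
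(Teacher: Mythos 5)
You take the same route as the paper --- $A_\alpha$ and $B_\alpha$ record, at each splitting rank $\xi$, whether $\alpha$ falls in the head or the tail of an amalgamation pair $X_1*X_2$ with $rank(X_1)=rank(X_2)=\xi$ --- and your pigeonhole scheme for (3) has the right shape, but the proposal stops exactly at the step you yourself flag as ``the genuine content of the proof,'' and the tool you propose for that step is wrong. The ``admissibility-style infinitude of the fibers, exactly as in Definition \ref{order}'' is a red herring: Definition \ref{order} belongs to the superatomic Boolean algebra construction of Section 3, and no infinitude of coordinates is needed or available here. The collision coordinate is pinned down by a minimality argument: given $\alpha<\beta$, let $Y\in\mu$ be of \emph{minimal} rank with $\alpha,\beta\in Y$. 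Then $\mu|Y$ cannot be directed (neatness puts $\alpha$ and $\beta$ into members of $\mu|Y$, and directedness of $\mu|Y$ would then give a common member of smaller rank), so \ref{morass}(5b) gives $Y=Y_1*Y_2$; minimality together with $Y_1\cap Y_2<Y_1\setminus Y_2<Y_2\setminus Y_1$ forces $\alpha\in Y_1\setminus Y_2$ and $\beta\in Y_2\setminus Y_1$, hence $\zeta:=rank(Y)-1$ lies in $A_\alpha\cap B_\beta$. If now $A_\alpha\setminus C\subseteq\rho$, $B_\beta\cap C\subseteq\rho$ and $\zeta\geq\rho$, then $\zeta\in A_\alpha$ forces $\zeta\in C$ while $\zeta\in B_\beta$ forces $\zeta\notin C$: contradiction. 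To find such a pair inside your pigeonholed family $D$ of size $\kappa^+$ with common threshold $\rho$, pick $\beta\in D$ with $|D\cap\beta|=\kappa$ and note that $\{\xi<\beta:\ m(\xi,\beta)\leq\rho\}=\mu_\rho(\beta)$ has size $<\kappa$ (Proposition \ref{rho}(a), i.e., the coherence lemma again), so some $\alpha\in D\cap\beta$ meets $\beta$ only above rank $\rho$. The paper packages all of this as the function $f_\alpha(\xi)=\min\{\rho:\ A_\alpha\cap B_\xi,\ B_\alpha\cap A_\xi\subseteq\rho\}$ whose sublevel sets are shown to be of size $<\kappa$, but the content is exactly the argument just described, and without it your proof of (3) does not close.

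Two further inaccuracies, both repairable by the same machinery. Condition (1) is \emph{not} ``disjoint by construction'': with this definition one must exclude that $\alpha$ lies in the head of one rank-$\xi$ pair and in the tail of another, and that is an application of the coherence lemma \ref{coherence} and homogeneity \ref{morass}(3) (the side on which $\alpha$ falls is decided by $ordtp(\mu_{\xi+1}(\alpha))$, which by coherence does not depend on the witnessing pair). And for (2), your statement that $A_\alpha\setminus A_\beta$ and $B_\alpha\setminus B_\beta$ are contained in the rank $\eta$ of some $Y\ni\alpha,\beta$ (hence of size $<\kappa$) is correct, but Lemma \ref{musequencecofcof} is not what proves it: the working tools are the localization lemma \ref{localization} together with \ref{morass}(5b), which place some $Y'\ni\alpha,\beta$ inside $\mu|X_i$ for any splitting $X=X_1*X_2$ of rank above $\eta$ that contains both, whence $\alpha<\beta$ and the ordering $X_1\cap X_2<X_1\setminus X_2<X_2\setminus X_1$ put $\beta$ on the same side as $\alpha$.
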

\begin{proof} 
Define 
$$A_\alpha=\{\xi\in \kappa:  \exists X_1, X_2\in \mu \
rank(X_1)=rank(X_2)=\xi, X_1*X_2\in \mu\ \hbox{and} \  \alpha\in X_1\setminus X_2\}$$
$$B_\alpha=\{\xi\in \kappa:  \exists X_1, X_2\in \mu \
rank(X_1)=rank(X_2)=\xi, X_1*X_2\in \mu\ \hbox{and} \  \alpha\in X_2\setminus X_1\}$$

To prove (1), suppose that $\xi\in X_1, X_2'$ and there are
$X_2$ and $X_1'$ such that $X_1*X_2, X_1'*X_2'\in\mu$ are of rank $\xi+1$. This 
contradicts the coherence lemma \ref{coherence} and the homogeneity \ref{morass} (3). 
To prove (2) suppose that $\alpha<\beta<\kappa^+$ and
that $\eta\in\kappa$ is above the rank of some $Y\in \mu$ satysfying
$\alpha,\beta\in Y$.
We will note that whenever $X\in \mu$ and $rank(X)=\xi+1\geq \eta$,
$X=X_1*X_2$ and $\alpha\in X_i\setminus X_{3-i}$, then
$\beta\in X_i\setminus X_{3-i}$ as well. This follows from the fact that
there is $Y'\in \mu| X_i$ for some $i=1, 2$ such that $\alpha, \beta\in Y'$
which is a consequence of the local almost directedness \ref{morass} (5b) and the
localization lemma \ref{localization}.

Finally let us see why (3) holds.  For $\alpha<\kappa^+$ let $f_\alpha:\alpha\rightarrow\kappa$
be a function defined for $\beta<\alpha$ by
$$f_\alpha(\beta)=\min\{\xi: (A_\alpha\cap B_\beta), (B_\alpha\cap A_\beta)\subseteq \xi\}.$$
(1) and (2) imply that $f$ is well-defined. As in the case of the classical Hausdorff gap construction it will be enough to
prove that the preimages of singletons under $f_\alpha$'s have cardinalities less than $\kappa$.
We will denote this statement as (*).
Indeed, under this hypothesis, if there were $C\subseteq \kappa$ as in
(3), then  for $\kappa^+$-many $\beta\in \kappa^+$ there would exist $\xi<\kappa$ such that
$$A_\beta\setminus C, B_\beta\cap C\subseteq \xi$$
Take an $\alpha_0$ among these $\{\beta: A_\beta\setminus C, B_\beta\cap C\subseteq \xi\}=D_\xi$
 such that below $\alpha_0$ there are $\kappa$ many elements of $D_\xi$.
Then $f_{\alpha_0}\restriction D_{\xi}$ assumes all its values below $\xi<\kappa$ and so
one value is assumed on $\kappa$ many elements by the regularity of $\kappa$,
contradicting the statement (*) about the $f_\alpha$s.

To prove  (*) fix $\xi<\kappa$ and $\alpha\in\kappa^+$. Let
$X\in \mu$ be of rank $\xi$ such that $\alpha\in X$. We will show that
for each $\beta\in \alpha\setminus X$ we have $f_\alpha(\beta)>\xi$ which is 
enough for (*).  Take $Y\in \mu$ of minimal rank such that
$\beta,\alpha\in Y$. By the coherence
Lemma \ref{coherence} $rank(Y)>\xi$.

 By the density lemma \ref{density}
and the coherence lemma \ref{coherence} we may assume that $X\subseteq Y$.
It follows from the local almost directedness \ref{morass} (5) that
$Y=Y_1*Y_2$. By the minimality of the rank of $Y$ we have that
$\beta\in Y_1\setminus Y_2$ and $\alpha\in Y_2\setminus Y_1$ and hence
$rank(Y)-1\in A_\beta\cap B_\alpha$ and so $f_\alpha(\beta)> rank(Y)-1\geq\xi$
as required.

\end{proof}

\vskip 26pt
\section{Coherent partitions of pairs}
\vskip 13pt
In this section we show a way of working with $2$-cardinals 
parallel to the methods of walks on ordinals introduced and developed by S. Todorcevic
(for a survey see \cite{stevowalks}). 
Todorcevic proved in ZFC  (\cite{stevoacta}) a strong failure of the Ramsey property
at $\omega_1$ and developed methods of stepping up (this failure and other phenomena) 
to higher cardinals based on the assumption of $\square_{\kappa}$ and using
colorings $\rho:[\kappa^+]^2\rightarrow \kappa$  with some stronge coherence properties
(see \cite{stevoacta} Section 2).
It was C. Morgan (Definition 2 of \cite{morganforcing}) who realized that using a 
simplified morass one can define colorings sharing many properties with $\rho$\footnote{For $\kappa=\omega_1$ 
the existence of $(\omega_1,\omega_2)$-cardinal implies $\square_{\omega_1}$, but 
it does not hold for other $\kappa$'s (\cite{vellemandiam}).
The opposite implication  does not hold even for $\kappa=\omega_1$
as the consistency strength of the negation  of $\square_{\omega_1}$
is the existence of a Mahlo cardinal  (see \cite{devlin2}) and the 
consistency strength of the nonexistence of an 
$(\omega_1,1)$-morass is the existence of an 
inaccessible cardinal (see \cite{devlin2}).}.

In this section 
after the proof of the fundamental properties of the 
colorings we use them for the proof of the existence of  $\kappa^+$-Aronszajn tree
and  the existence of a function with  property $\Delta$.
Our arguments work in a new context
 of $\kappa>\omega_1$ apparently not addressed in the literature before.
This presentation is very modest compared to the applications of 
$\rho$-functions which resulted in the case of $\kappa=\omega$ in
many fascinating constructions (see \cite{stevowalks}, \cite{stevocoherent})
for example of Banach spaces (e.g. \cite{argyrosjordi}) extraspecial p-groups (\cite{sspgrups}),
quadratic vector spaces (\cite{quadratic}), zero-sets of polynomials in the infinite dimension
(\cite{antonio}) and many others.
The main results concerning coherent partitions of pairs which 
are present in the literature at the moment in a language which can be easily interpreted
in the context of $2$-cardinals concern generic stepping up and are addressed in the next section.
Also the main applications of  property $\Delta$ discussed in this section belong there.

\vskip 13pt
\begin{definition}[\cite{morganforcing}]\label{todorcevic}
 Let $\mu$ be a $(\kappa,\kappa^+)$-cardinal,
then the following function 
$m_\mu=m\colon[\kappa^+]^2\rightarrow\kappa$
is called a $\mu$-coloring:
$$m(\alpha,\beta)=m(\{\alpha,\beta\})=\min\{rank(X):\ \alpha,\beta\in X\in\mu\}$$
\end{definition}

The coherence of $\mu$-sequences translates into the coherence of $\mu$-colorings.

\begin{lemma}\label{mucoloringcoherence}
Suppose that $\kappa$ is a regular cardinal, $\beta<\alpha<\kappa^+$,  $\mu$ 
is a $(\kappa,\kappa^+)$-cardinal and $(\mu_\xi(\alpha))_{\alpha<\kappa}$ is a $\mu$-sequence at $\alpha$
as defined in \ref{musequencedef}. Then
for every $\xi\geq m(\alpha,\beta)$ we have 
$$\mu_{\xi}(\beta)=\mu_\xi(\alpha)\cap\beta.$$
\end{lemma}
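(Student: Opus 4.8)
The plan is to reduce everything to the already-proved coherence of $\mu$-sequences, namely Lemma \ref{musequencecoherence}, which asserts that whenever $\beta\in\mu_\xi(\alpha)$ one has $\mu_\xi(\beta)=\mu_\xi(\alpha)\cap\beta$. Thus the entire content of the present lemma is the single membership fact that $\beta\in\mu_\xi(\alpha)$ for every $\xi\geq m(\alpha,\beta)$; once this is in hand the desired equality is immediate by applying Lemma \ref{musequencecoherence} at the index $\xi$.

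To verify the membership at the threshold index $\xi=m(\alpha,\beta)$, I would unwind the definition of the $\mu$-coloring (Definition \ref{todorcevic}): there is $X\in\mu$ with $rank(X)=m(\alpha,\beta)$ and $\alpha,\beta\in X$. Since $\mu$-sequences are well-defined (the remark following Definition \ref{musequencedef}), the value $\mu_{m(\alpha,\beta)}(\alpha)$ does not depend on the witnessing set of that rank, so I may compute it using $X$ itself, obtaining $\mu_{m(\alpha,\beta)}(\alpha)=X\cap\alpha$. As $\beta<\alpha$ and $\beta\in X$, this gives $\beta\in X\cap\alpha=\mu_{m(\alpha,\beta)}(\alpha)$.

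For an arbitrary $\xi\geq m(\alpha,\beta)$ I would then invoke the monotonicity of the $\mu$-sequence established in Lemma \ref{musequencelemma}, which says that $(\mu_\xi(\alpha))_{\xi<\kappa}$ is nondecreasing. Hence $\mu_{m(\alpha,\beta)}(\alpha)\subseteq\mu_\xi(\alpha)$, and therefore $\beta\in\mu_\xi(\alpha)$ for all such $\xi$, completing the reduction.

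There is essentially no hard step here: the only thing to get right is that $m(\alpha,\beta)$ is precisely the least rank at which $\alpha$ and $\beta$ first lie in a common element of $\mu$, hence exactly the least index from which $\beta$ enters the $\mu$-sequence of $\alpha$. Below this threshold the asserted identity generally fails, which is why the hypothesis $\xi\geq m(\alpha,\beta)$ cannot be weakened; above it, monotonicity together with Lemma \ref{musequencecoherence} does all the work.
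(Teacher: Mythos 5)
Your proof is correct and follows exactly the route of the paper, whose proof of Lemma \ref{mucoloringcoherence} is the one-line remark that it follows from Lemma \ref{musequencecoherence} and the definition of the $\mu$-coloring. You have simply made explicit the two details the paper leaves implicit: that $\beta\in\mu_{m(\alpha,\beta)}(\alpha)$ by well-definedness of the $\mu$-sequence applied to a witness $X$ of minimal rank, and that monotonicity (Lemma \ref{musequencelemma}) propagates this membership to all $\xi\geq m(\alpha,\beta)$.
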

\begin{proof}This follows from \ref{musequencecoherence} and the definition of $\mu$-coloring.
\end{proof}

The following proposition corresponds to 2.3 of \cite{stevoacta}. 

\vskip 13pt
\begin{proposition}\label{rho} Let $\kappa$ be a regular cardinal and
$\mu$ be a $(\kappa, \kappa^+)$-cardinal. Let $m\colon[\kappa^+]^2
\rightarrow\kappa$ be a $\mu$-coloring.
Let $\alpha<\beta<\gamma<\kappa^+$, $\nu<\kappa$,
$0<\delta=\bigcup\delta<\epsilon<\kappa^+$, then
the following conditions are satisfied:
\begin{enumerate}[(a)]
\item $|\{\xi<\alpha:\ m(\xi,\alpha)\leq\nu\}|<\kappa$

\item $m(\alpha,\gamma)\leq\max\{m(\alpha,\beta),m(\beta,\gamma)\}$
\item $m(\alpha,\beta)\leq\max\{m(\alpha,\gamma),m(\beta,\gamma)\}$

 \item There is  
 $\zeta<\delta$ such that $m(\xi,\epsilon)\geq m(\xi,\delta)$ for all $\zeta\leq\xi<\delta$.
\end{enumerate}

\end{proposition}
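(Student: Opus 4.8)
The plan is to read (a), (b), (c) straight off the coherence of the $\mu$-sequences and to obtain (d) by an induction on rank.

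I would first record that for $\xi<\alpha$ one has $m(\xi,\alpha)\le\nu$ if and only if $\xi\in\mu_\nu(\alpha)$: the direction from right to left is immediate from Definition \ref{musequencedef}, and for the converse one takes $X\in\mu$ of rank $m(\xi,\alpha)\le\nu$ with $\xi,\alpha\in X$, enlarges it by the density lemma \ref{density} to $X'\supseteq X$ of rank $\nu$, and invokes the coherence lemma \ref{coherence} to identify $X'\cap\alpha$ with $\mu_\nu(\alpha)$. Then (a) is the equality $\{\xi<\alpha:m(\xi,\alpha)\le\nu\}=\mu_\nu(\alpha)$, a set of size $<\kappa$. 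For (b), put $\xi=\max\{m(\alpha,\beta),m(\beta,\gamma)\}$; then $\beta\in\mu_\xi(\gamma)$ and $\alpha\in\mu_\xi(\beta)$, while $\xi\ge m(\beta,\gamma)$ lets \ref{mucoloringcoherence} give $\mu_\xi(\beta)=\mu_\xi(\gamma)\cap\beta$, so $\alpha\in\mu_\xi(\gamma)$ and $m(\alpha,\gamma)\le\xi$. For (c), put $\xi=\max\{m(\alpha,\gamma),m(\beta,\gamma)\}$; then $\alpha,\beta$ both lie in $\mu_\xi(\gamma)=Z\cap\gamma$ for a single $Z\in\mu$ of rank $\xi$, so $m(\alpha,\beta)\le\xi$.

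For (d) write $\rho_0=m(\delta,\epsilon)$ and fix $W\in\mu$ of rank $\rho_0$ with $\delta,\epsilon\in W$. By \ref{mucoloringcoherence} we have $\mu_\eta(\delta)=\mu_\eta(\epsilon)\cap\delta$ for $\eta\ge\rho_0$, and combined with (c) this shows that any $\xi<\delta$ with $m(\xi,\delta)>\rho_0$ already satisfies $m(\xi,\epsilon)\ge m(\xi,\delta)$. Hence the ``bad'' set $\{\xi<\delta:m(\xi,\epsilon)<m(\xi,\delta)\}$ is contained in $\mu_{\rho_0}(\delta)=W\cap\delta$, a set of size $<\kappa$. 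If $\sup(W\cap\delta)<\delta$ (in particular whenever $\mathrm{cf}(\delta)=\kappa$) we may simply take $\zeta=\sup(W\cap\delta)$ and are done. The remaining case, in which $\delta$ is a limit point of $W$ and $W\cap\delta$ is cofinal in $\delta$, I would treat by induction on $rank(W)$. Since $\rho_0$ is minimal, $\mu|W$ cannot be directed, so \ref{morass}(5b) gives $W=W_1*W_2$ with $\delta\in W_1\setminus W_2$ and $\epsilon\in W_2\setminus W_1$; the localization lemma \ref{localization} together with $\mu|W=(\mu|W_1)\cup(\mu|W_2)\cup\{W_1,W_2\}$ shows that every $\xi\in(W_1\setminus W_2)\cap\delta$ has $m(\xi,\epsilon)=\rho_0$ and is therefore not bad, so the bad points sit in the initial part $W_1\cap W_2$. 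On $W_1\cap W_2$ the coherence map $f_{W_1W_2}$ is the identity and sends $\delta$ to $\delta_2\in W_2\setminus W_1$, whence $m(\xi,\delta)=m(\xi,\delta_2)$ there, and (b)--(c) applied inside $W_2$ push the question to the pair $\delta_2,\epsilon$, which now live in an ambient set of rank $\rho_0-1$.

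The step I expect to be the genuine obstacle is exactly this cofinal case, and the subtlety is more than bookkeeping. The symmetric guess---that for any $\beta,\epsilon$ and limit $\delta\le\min(\beta,\epsilon)$ the bad points of $\{\xi<\delta:m(\xi,\epsilon)<m(\xi,\beta)\}$ are bounded below $\delta$---is false as soon as the coordinate being compared from above is the larger one, since then the whole middle part $W_1\setminus W_2$ becomes bad and can be cofinal in $\delta$. Correspondingly, after the reduction to $W_2$ the full bad set of the sub-problem may again be cofinal in $\delta$, and only its restriction to the initial segment $W_1\cap W_2$ is bounded. The induction must therefore be phrased so that it carries this restriction to the common part downward through the decomposition (equivalently, so that the smaller marked coordinate always stays on the side of $\delta$); pinning down this localized inductive statement, rather than producing any single inequality, is the heart of the proof, after which the strictly decreasing rank closes the argument and the base case $rank(W)=0$ forces $m(\xi,\delta)=m(\xi,\epsilon)$ for all relevant $\xi$ by (b) and (c).
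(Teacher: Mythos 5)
Your proofs of (a), (b) and (c) are correct and are essentially the paper's own arguments, merely phrased through the $\mu$-sequences and Lemma \ref{mucoloringcoherence} instead of picking two sets of equal rank and quoting the coherence lemma \ref{coherence} directly. The first half of your (d) is also sound: by (c), any bad $\xi$ (one with $m(\xi,\epsilon)<m(\xi,\delta)$) must satisfy $m(\xi,\delta)\le\rho_0=m(\delta,\epsilon)$ and hence lies in $W\cap\delta$; the points of $(W_1\setminus W_2)\cap\delta$ are never bad, by the localization lemma \ref{localization}; so all bad points lie in $W_1\cap W_2$, and only the case $\sup(W_1\cap W_2)=\delta$ remains.

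In that remaining case, however, you transport the wrong coordinate, and this is where the proof breaks. Sending $\delta$ upward to $\delta_2\in W_2\setminus W_1$ (its image under the order isomorphism of $W_1$ onto $W_2$; here in fact $\delta_2=\min(W_2\setminus W_1)$) turns the question into: is $\{\xi<\delta:\ m(\xi,\epsilon)<m(\xi,\delta_2)\}$ bounded below $\delta$, where now $\delta<\delta_2\le\epsilon$. Statement (d) for the pair $(\delta_2,\epsilon)$ --- even granting that $\delta_2$ is a limit ordinal, which it need not be --- only produces a bound $\zeta<\delta_2$, and this $\zeta$ may perfectly well lie in the interval $[\delta,\delta_2)$, in which case it says nothing about the $\xi<\delta$ you must control. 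So (d) cannot serve as its own inductive hypothesis along your reduction; a strictly stronger ``localized'' statement is required, and you acknowledge this (``pinning down this localized inductive statement \dots is the heart of the proof'') without ever formulating or proving it. Since that unproved statement is the entire content of the inductive step, your proof of (d) is incomplete. The paper closes the induction by transporting in the opposite direction --- exactly what your parenthetical remark about keeping ``the smaller marked coordinate on the side of $\delta$'' gestures at, but does not execute. It maps $\epsilon$ down into the side of $\delta$, setting $\epsilon'=f_{X_1X_2}(\epsilon)\in X_1$, so that $\epsilon'\ge\delta$ and the new pair is $(\delta,\epsilon')$ with the \emph{same} lower coordinate $\delta$. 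If $\epsilon'=\delta$, homogeneity gives $m(\xi,\epsilon)=m(\xi,\delta)$ on $X_1\cap X_2$ and no bad points remain; if $\epsilon'>\delta$, then $m(\delta,\epsilon')\le rank(X_1)<m(\delta,\epsilon)$, so the induction on the value $m(\delta,\epsilon)$ applies verbatim to $(\delta,\epsilon')$, yields some $\zeta<\delta$, and homogeneity ($m(\xi,\epsilon)=m(\xi,\epsilon')$ for $\xi\in X_1\cap X_2=\{\xi\in X:\ \xi<\delta\}$) transfers it back to $(\delta,\epsilon)$. Because $\delta$ stays fixed as the lower coordinate, the plain statement (d) is inductively self-sufficient and no strengthened hypothesis is ever needed.
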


\begin{proof}
\noindent (a)\par
Let $(\mu_\xi)_{\xi<\kappa}(\alpha)$ be a $\mu$-sequence at $\alpha$ (see \ref{musequencedef}).
By the definition of $m$ and the coherence lemma \ref{coherence} and Lemma
\ref{musequencelemma} the following  is satisfied:
$$\{\xi<\alpha:\ m(\xi,\alpha)\leq\nu\}= \mu_\nu(\alpha).$$

\noindent (b)\par
Let $X,Y\in\mu$ be such that
$$\alpha,\beta\in X,\ rank(X)=\max\{m(\alpha,\beta),
m(\beta,\gamma)\}$$
$$\beta,\gamma\in Y,\ rank(Y)=\max\{m(\alpha,\beta),
m(\beta,\gamma)\},$$
which exist by the definition of $m$ and the density lemma \ref{density}.
Now $\beta\in X,Y\in\mu,\ rank(X)=rank(Y)$, so
$X\cap\beta=Y\cap\beta$ by the coherence  lemma \ref{coherence},
so $\alpha\in Y$, and hence $m(\alpha,\gamma)
\leq rank(Y)=\max\{m(\alpha,\gamma),m(\beta,\gamma)\}$.\par
\vskip 6pt
\vfill
\break
\noindent (c)\par
Let $X,Y\in\mu$ be such that
$$\alpha,\gamma\in X,\ rank(X)=\max\{m(\alpha,\gamma),
m(\beta,\gamma)\},$$
$$\beta,\gamma\in Y,\ rank(Y)=\max\{m(\alpha,\gamma),
m(\beta,\gamma)\}.$$
As $\gamma\in X,Y\in\mu,\ rank(X)=rank(Y)$, so
$X\cap\gamma=Y\cap\gamma$ by the coherence lemma \ref{coherence},
so $\alpha\in Y$, and hence $m(\alpha,\gamma)
\leq rank(Y)=\max\{m(\alpha,\gamma),m(\beta,\gamma)\}$.\par
\vskip 6pt
\noindent(d) 
We will prove it  by induction on $m(\delta,\epsilon)$.
Let 
$X\in\mu$ be  of minimal rank which contains $\delta$ and $\epsilon$.
Note that if
$\xi<\delta$ and $\xi\not \in X$, then by \ref{musequencelemma} 
for
$\mu$-sequence at $\epsilon$ any element of
$\mu$ which contains $\xi$ and $\epsilon$ must contain $\delta$, and so
$m(\xi,\epsilon)\geq m(\xi,\delta)$, as required.

Now let us turn to $\xi\in X$. By the neatness $X=X_1*X_2$. By the minimality $\delta\in X_1\setminus X_2$
and $\epsilon\in X_2\setminus X_1$. 

If $\zeta=\sup(X_1\cap X_2)<\delta$,
then note that any $\xi\in X$ satisfying $\zeta\leq \xi<\delta$ belongs to
$X_1\setminus X_2$ and so $m(\xi,\epsilon)=rank(X)>rank(X_1)\geq m(\xi,\delta)$.

If $\sup(X_1\cap X_2)=\delta$, we consider two cases. First $f_{X_1X_2}(\epsilon)=\delta$,
then $m(\xi,\epsilon)=m(\xi,\delta)$ for all $\xi\in X_1\cap X_2=\{\xi\in X: \xi<\delta\}$
by the homogeneity of $\mu$. Secondly $\delta< f_{X_1X_2}(\epsilon)$, then
we use the inductive assumption to conclude that there is
$\zeta<\delta$ such that $m(\xi, f_{X_1X_2}(\epsilon))\geq m(\xi,\delta)$ for
every $\zeta\leq\xi<\delta$. However $m(\xi, f_{X_1X_2}(\epsilon))=m(\xi,\epsilon)$
for $\xi \in X_1\cap X_2$ by the homogeneity of $\mu$ and in this case
$X_1\cap X_2=\{\xi\in X: \xi<\delta\}$. This completes the proof of (d).
\end{proof}

\begin{corollary}\label{mequivalence}
Suppose that $\kappa$ is a regular cardinal, $\mu$ is a $(\kappa,\kappa^+)$-cardinal
and $m$ is the $\mu$-coloring.
Let $\gamma_1<\gamma_2<\gamma_3<\kappa^+$. Then
\begin{enumerate}
\item $m(\gamma_1,\gamma_2)\leq m(\gamma_2,\gamma_3)\ \ \hbox{ if and only if}\ \
m(\gamma_1,\gamma_3)\leq m(\gamma_2,\gamma_3)$;
\item if $m(\gamma_1, \gamma_2)>m(\gamma_2, \gamma_3)$
or $m(\gamma_1, \gamma_3)>m(\gamma_2, \gamma_3)$, then $m(\gamma_1,\gamma_2)=
m(\gamma_1, \gamma_3)$.
\end{enumerate}
\end{corollary}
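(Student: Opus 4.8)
The plan is to derive Corollary \ref{mequivalence} directly from the properties (b) and (c) of Proposition \ref{rho}, which are already established. These two inequalities express that $m$ behaves like an ultrametric-type coloring: for any triangle, the largest of the three values is attained by at least two of the pairs, and crucially, the relevant maxima can be controlled. Both parts of the corollary are purely combinatorial consequences of (b) and (c), so no further appeal to the morass structure should be needed.

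For part (1), I would argue by the symmetry of the two implications. Suppose $m(\gamma_1,\gamma_2)\leq m(\gamma_2,\gamma_3)$. Applying (b) with $\alpha=\gamma_1$, $\beta=\gamma_2$, $\gamma=\gamma_3$ gives $m(\gamma_1,\gamma_3)\leq\max\{m(\gamma_1,\gamma_2),m(\gamma_2,\gamma_3)\}=m(\gamma_2,\gamma_3)$, using the hypothesis. This gives the forward direction. For the converse, assume $m(\gamma_1,\gamma_3)\leq m(\gamma_2,\gamma_3)$ and apply (c) with $\alpha=\gamma_1$, $\beta=\gamma_2$, $\gamma=\gamma_3$, yielding $m(\gamma_1,\gamma_2)\leq\max\{m(\gamma_1,\gamma_3),m(\gamma_2,\gamma_3)\}=m(\gamma_2,\gamma_3)$, as wanted. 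Thus the two inequalities are equivalent.

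For part (2), the hypothesis is that at least one of $m(\gamma_1,\gamma_2)$ or $m(\gamma_1,\gamma_3)$ strictly exceeds $m(\gamma_2,\gamma_3)$; the goal is $m(\gamma_1,\gamma_2)=m(\gamma_1,\gamma_3)$. The natural approach is to show that part (1) forces both $m(\gamma_1,\gamma_2)$ and $m(\gamma_1,\gamma_3)$ to be strictly larger than $m(\gamma_2,\gamma_3)$: by the contrapositive of (1), if one of them were $\leq m(\gamma_2,\gamma_3)$ so would be the other, contradicting the strict inequality assumed for the larger one. Once both exceed $m(\gamma_2,\gamma_3)$, I would read (b) and (c) as saying $m(\gamma_1,\gamma_3)\leq\max\{m(\gamma_1,\gamma_2),m(\gamma_2,\gamma_3)\}=m(\gamma_1,\gamma_2)$ and symmetrically $m(\gamma_1,\gamma_2)\leq\max\{m(\gamma_1,\gamma_3),m(\gamma_2,\gamma_3)\}=m(\gamma_1,\gamma_3)$, where in each case the maximum is the first term precisely because that term strictly dominates $m(\gamma_2,\gamma_3)$. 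Combining the two inequalities yields equality.

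The main thing to get right is the bookkeeping in part (2): one must be careful to first establish that \emph{both} of the two values strictly exceed $m(\gamma_2,\gamma_3)$ before invoking (b) and (c), since that is exactly what lets the maximum collapse to the intended term. There is no real obstacle here beyond this case analysis; all the structural work lives in Proposition \ref{rho}, and this corollary is a formal unwinding of the max-inequalities. I would present it compactly, noting at the start that the whole argument uses only (b), (c), and the equivalence just proved in part (1).
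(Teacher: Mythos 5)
Your proof is correct and follows essentially the same route as the paper: both parts are formal consequences of the two max-inequalities (b) and (c) of Proposition \ref{rho}, with the strict hypothesis in part (2) used to collapse each maximum to the intended term. Your only deviation is cosmetic: in part (2) you first invoke the contrapositive of part (1) to get both strict inequalities before collapsing the maxima, whereas the paper collapses them directly from the single assumed inequality, but the underlying deductions are identical.
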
 
\begin{proof} (b) and (c) of \ref{rho} assume the following forms
 $$m(\gamma_1,\gamma_3)\leq \max\{m(\gamma_1, \gamma_2), m(\gamma_2,\gamma_3)\}.\leqno (*)$$
$$m(\gamma_1,\gamma_2)\leq \max\{m(\gamma_1, \gamma_3), m(\gamma_2,\gamma_3)\}.\leqno (**)$$

 (1) For the forward implication, use the hypothesis and (*).
For the backward implication, use the hypothesis and (**).

(2) In the first case, the hypothesis $m(\gamma_1, \gamma_2)>m(\gamma_2, \gamma_3)$  and (**) gives
$m(\gamma_1,\gamma_2)\leq m(\gamma_1, \gamma_3)$ while the hypothesis and  (*)
gives $m(\gamma_1,\gamma_3)\leq m(\gamma_1, \gamma_2)$.
In the second case, the hypothesis $m(\gamma_1, \gamma_3)>m(\gamma_2, \gamma_3)$  and (*) gives
$m(\gamma_1,\gamma_3)\leq m(\gamma_1, \gamma_2)$ while the hypothesis and  (**)
gives $m(\gamma_1,\gamma_2)\leq m(\gamma_1, \gamma_3)$.
\end{proof}

\begin{theorem} [\cite{devlin2}]
Let $\kappa$ be a regular cardinal and $\mu$ a $(\kappa,\kappa^+)$-cardinal.
Suppose that $m:[\kappa^+]^2\rightarrow \kappa$ is 
a $\mu$-coloring.
 Then $T=\{m(^.,\alpha)\restriction\beta:\ \beta<\alpha<\kappa^+\}$
 with inclusion is a $\kappa^+$-Aronszajn tree.
\end{theorem}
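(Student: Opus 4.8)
The plan is to verify the three defining properties of a $\kappa^+$-Aronszajn tree for $T$ ordered by inclusion (end-extension of functions): that it is a tree of height $\kappa^+$, that every level has size at most $\kappa$, and that it has no branch of length $\kappa^+$. Throughout I will use the identity recorded in the proof of Proposition \ref{rho}(a), namely that for $\xi<\alpha$ we have $\{\xi<\alpha:m(\xi,\alpha)\le\nu\}=\mu_\nu(\alpha)$ for each $\nu<\kappa$, equivalently $m(\xi,\alpha)=\min\{\rho:\xi\in\mu_\rho(\alpha)\}$.

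First, the tree structure. Since $m(\cdot,\alpha)\restriction\beta'=(m(\cdot,\alpha)\restriction\beta)\restriction\beta'$ for $\beta'<\beta<\alpha$, every initial segment of a node of $T$ is again a node of $T$; as each node is a function whose domain is an ordinal, the predecessors of a fixed node are precisely its proper restrictions, linearly ordered by inclusion. Thus $T$ is a tree in which $m(\cdot,\alpha)\restriction\beta$ sits on level $\beta$. Each level $\beta<\kappa^+$ is nonempty, since $m(\cdot,\beta+1)\restriction\beta\in T$ has domain $\beta$; hence $ht(T)=\kappa^+$.

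For the level bound, fix $\beta$ and an arbitrary $\alpha>\beta$, put $\nu=m(\alpha,\beta)$, and choose $X\in\mu$ of rank $\nu$ with $\alpha,\beta\in X$. By Lemma \ref{mucoloringcoherence}, for $\rho\ge\nu$ one has $\mu_\rho(\beta)=\mu_\rho(\alpha)\cap\beta$, so $m(\xi,\alpha)=m(\xi,\beta)$ for every $\xi<\beta$ with $m(\xi,\beta)>\nu$; in particular the restriction of $m(\cdot,\alpha)$ to $\beta\setminus\mu_\nu(\beta)$ is independent of $\alpha$, while the remaining values, on $\mu_\nu(\beta)=X\cap\beta$, are determined inside $\mu|X$. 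I would then fix, for each $\nu<\kappa$, a single $X_\nu\in\mu$ of rank $\nu$ containing $\beta$ (all such have the same order type and, by the coherence lemma \ref{coherence}, the same trace below $\beta$), and use the homogeneity \ref{morass}(3) to transport $\alpha$ to $\alpha'=f_{X_\nu X}(\alpha)\in X_\nu$. Since $f_{X_\nu X}$ is the identity below $\beta$ by \ref{corcoherence} and an isomorphism of $\mu|X$ onto $\mu|X_\nu$ carrying $\alpha$ to $\alpha'$, it preserves the colouring on pairs drawn from $X$, whence $m(\cdot,\alpha)\restriction\beta=m(\cdot,\alpha')\restriction\beta$. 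Thus level $\beta$ is contained in $\{m(\cdot,\alpha')\restriction\beta:\alpha'\in\bigcup_{\nu<\kappa}X_\nu\}$, a set of size at most $\kappa$. This homogeneity bookkeeping, exactly parallel to the computation of levels in Theorem \ref{kurepa}, is the main technical obstacle.

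Finally, no branch of length $\kappa^+$. Suppose $g:\kappa^+\to\kappa$ satisfies $g\restriction\beta\in T$ for all $\beta$; then for each $\beta$ there is $\alpha^*\ge\beta$ with $g\restriction\beta=m(\cdot,\alpha^*)\restriction\beta$. For $\nu<\kappa$ set $s_\nu=\sup\{\xi<\kappa^+:g(\xi)\le\nu\}$. If every $s_\nu<\kappa^+$, then $\sup_{\nu<\kappa}s_\nu<\kappa^+$ by regularity of $\kappa^+$, yet $\xi\le s_{g(\xi)}$ for every $\xi<\kappa^+$, which is absurd; so there is $\nu_0<\kappa$ with $U=\{\xi<\kappa^+:g(\xi)\le\nu_0\}$ unbounded. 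Listing the first $\kappa$ elements of $U$ and taking $\alpha<\kappa^+$ above their supremum gives $|U\cap\alpha|=\kappa$. Choosing $\beta>\alpha$ and $\alpha^*$ with $g\restriction\beta=m(\cdot,\alpha^*)\restriction\beta$, Proposition \ref{rho}(a) yields $U\cap\alpha\subseteq\{\xi<\beta:m(\xi,\alpha^*)\le\nu_0\}=\mu_{\nu_0}(\alpha^*)$, whose cardinality is less than $\kappa$ since every element of $\mu$ has size less than $\kappa$; this contradicts $|U\cap\alpha|=\kappa$. Therefore $T$ has no cofinal branch and is $\kappa^+$-Aronszajn.
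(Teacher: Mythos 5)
Your proposal is correct and takes essentially the same route as the paper: the level bound comes from transporting each $\alpha$ via the homogeneity maps into a fixed element of each rank containing $\beta$ (the paper packages exactly this as the equivalence relation $=_\beta$ with at most $\kappa$ classes), with coherence handling the coordinates outside $X$, where the paper instead cites \ref{mequivalence}(2). The branch argument likewise rests on Proposition \ref{rho}(a) together with the regularity of $\kappa^+$, which the paper phrases as the impossibility of a ${<}\kappa$-to-one function from $\kappa^+$ into $\kappa$; your single-node contradiction via an unbounded fiber is just a reorganization of the same fact.
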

\begin{proof} The proof follows \cite{stevoacta}.
First note that $T$ does not have branches of length $\kappa^+$.
Since each function $m(^.,\alpha)$ is $<\kappa$-to-one
(by \ref{rho} (a)), as $\kappa^+$ is regular, a branch of length $\kappa^+$ would 
give rise to $<\kappa$-to-one function from $\kappa^+$
into $\kappa$ which is impossible.\par
\noindent It can be easily seen that $Lev_\beta(T)=
\{m(^.,\alpha)\restriction\beta:\beta<\alpha<\kappa^+\}$.
We need to show that this set has size at most $\kappa$.
Let us define a relation for $\alpha_1,\alpha_2\in \kappa^+-\beta$
by
$\alpha_1=_\beta\alpha_2$  if and only if
$$\exists X_1,X_2\in\mu\ rank(X_1)=rank(X_2),\
\alpha_1,\beta\in X_1,\ \alpha_2,\beta\in X_2,\
f_{X_1X_2}(\alpha_1)=\alpha_2.$$
By the fact that $f_{X_3 X_2}\circ f_{X_2 X_1}=f_{X_3 X_1}$
for $X_1, X_2, X_3\in \mu$ of the same rank, the $=_\beta$
is an equivalence relation. Note that there are 
at most $\kappa$-many equivalence
classes of this relation, as there are
$\kappa$-many ranks and each element of $\mu$ has less than 
$\kappa$ elements. So, it is sufficient to prove
that if $\alpha_1=_\beta\alpha_2$, then $m(^.,\alpha_1)\restriction
\beta=m(^.,\alpha_2)\restriction\beta$. 
Let $X_1, X_2$ witness the fact that
$\alpha_1=_\beta\alpha_2$. Let $\gamma<\beta$. 

If 
$\gamma\in X_1\cap\beta=X_2\cap \beta$, then 
$m(\gamma,\alpha_1)=m(\gamma,\alpha_2)$, since 
$f_{X_1 X_2}(\alpha_1)=\alpha_2$, and by the homogeneity 
of $\mu$.

 If $\gamma\not\in X_1\cap\beta=X_2\cap\beta$,
then $m(\gamma,\alpha_1)=m(\gamma,\alpha_2)$ by \ref{mequivalence} (2).
\end{proof}

 The existence of a $\kappa^{++}$-Souslin tree
may also follow from the existence of a $(\kappa,\kappa^+)$-cardinal.
It is so when $2^\kappa=\kappa^+$ or when a Cohen subset
of $\kappa^+$ is added generically to the universe
(see \cite{vellemandiam} or \cite{shelahstanley}).

\begin{proposition}\label{neatsuccessor}
Suppose that $\kappa$ is a regular cardinal and $\mu$ is a  $(\kappa,\kappa^+)$-cardinal.
All the values of $m_\mu$ are successor ordinals.
\end{proposition}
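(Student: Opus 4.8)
The plan is to fix distinct $\alpha,\beta<\kappa^+$ and let $X\in\mu$ be an element of \emph{minimal} rank with $\alpha,\beta\in X$, which exists by directedness and covering (\ref{morass} (4),(6)); by definition $m_\mu(\alpha,\beta)=rank(X)$. Since every ordinal is either $0$, a successor, or a nonzero limit, it suffices to rule out the two non-successor possibilities for $\delta:=rank(X)$.

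First I would dispose of the case where $\delta$ is a nonzero limit. Apply local almost directedness \ref{morass} (5) to $X$. Option (5b) would give $X=X_1*X_2$ with $rank(X)=rank(X_i)+1$, a successor, so it is excluded when $\delta$ is a limit; hence (5a) holds and $\mu|X$ is directed. As $\delta>0$, neatness \ref{morass} (7) gives $X=\bigcup(\mu|X)$, so there are $Y_1,Y_2\in\mu|X$ with $\alpha\in Y_1$ and $\beta\in Y_2$. Directedness of $\mu|X$ then yields $Z\in\mu|X$ with $Y_1,Y_2\subseteq Z$, whence $\alpha,\beta\in Z$ and $rank(Z)<\delta$, contradicting the minimality of $\delta$. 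This is the routine half of the argument, using only neatness together with directedness exactly as in the proof of the density lemma \ref{density}.

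The remaining, and more delicate, task is to exclude $\delta=0$, which amounts to showing that \emph{elements of $\mu$ of rank zero are singletons}, so that no pair of distinct ordinals lies in a common rank-zero element. Here I would invoke Lemma \ref{musequencelemma}: for each $\gamma$ the $\mu$-sequence $(\mu_\xi(\gamma))_{\xi<\kappa}$ is continuous, nondecreasing, with union $\gamma$, so its bottom term $\mu_0(\gamma)=X_0\cap\gamma$ (where $X_0$ is the rank-zero element containing $\gamma$) is the empty union and hence $\emptyset$. Thus $\gamma=\min(X_0)$ for every $\gamma$. If some rank-zero $X_0$ had two elements $a<b$, then by the coherence lemma \ref{coherence} applied to the two (equal-rank) rank-zero sets containing $a$ and $b$ we would get $a\in X_0\cap b=\mu_0(b)=\emptyset$, a contradiction. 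Hence rank-zero sets are singletons, so $\delta\geq 1$, and combined with the previous paragraph $\delta$ must be a successor, which is the claim.

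The \emph{main obstacle} is precisely this rank-zero case: neatness \ref{morass} (7) is asserted only for elements of nonzero rank, so the tidy ``descend below $X$'' argument is unavailable at the bottom level, and one must argue separately that minimal elements of $(\mu,\subseteq)$ cannot contain two ordinals. The cleanest route is the one above, extracting emptiness of the initial term of every $\mu$-sequence from the continuity established in Lemma \ref{musequencelemma} and then closing the case with the coherence lemma \ref{coherence}; once rank-zero sets are known to be singletons, the rest is a direct application of neatness and directedness.
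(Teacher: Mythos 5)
Your handling of the nonzero-limit case is correct and is exactly the content hiding behind the paper's one-line proof (the paper says only ``this follows from the neatness''): if $X$ witnesses the minimal rank $\delta$ and $\delta$ is a nonzero limit, then option (5b) of \ref{morass} is impossible because it forces $rank(X)=rank(X_1)+1$, so $\mu|X$ is directed, and neatness \ref{morass} (7) together with directedness of $\mu|X$ produces a lower-rank element containing both ordinals, contradicting minimality. You are also right that neatness says nothing at rank zero, so that case needs a separate argument.

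The rank-zero case, however, is where your proof has a genuine gap. You infer $\mu_0(\gamma)=\emptyset$ from the ``continuity'' in Lemma \ref{musequencelemma}, reading the $0$-th term as an empty union. But continuity of a transfinite sequence means $\mu_\lambda(\gamma)=\bigcup_{\xi<\lambda}\mu_\xi(\gamma)$ at \emph{nonzero} limit stages $\lambda$; indeed the proof of Lemma \ref{musequencelemma} obtains continuity from neatness, which by \ref{morass} (7) applies only to elements of nonzero rank, so the lemma asserts nothing whatsoever about the term $\mu_0(\gamma)$. Worse, the fact you need -- that rank-zero elements of $\mu$ are singletons -- is not derivable from conditions (1)--(7) of Definition \ref{morass} at all. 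To see this, partition $\kappa^+$ into consecutive pairs $I_\gamma=\{e(\gamma),e(\gamma)+1\}$ listed in increasing order, set $D(X)=\bigcup_{\gamma\in X}I_\gamma$, and given any $(\kappa,\kappa^+)$-cardinal $\mu$ put $\mu'=\{D(X):X\in\mu\}$. Since $D$ is an isomorphism of $(\mu,\subseteq)$ onto $(\mu',\subseteq)$ which preserves order types, the maps $f_{YX}$, the relation $X=X_1*X_2$, unions and the covering property, the family $\mu'$ satisfies all seven conditions of Definition \ref{morass}; yet its rank-zero elements contain whole pairs $I_\gamma$, so $m_{\mu'}(e(\gamma),e(\gamma)+1)=0$ for suitable $\gamma$. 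Thus no argument from the stated axioms (yours or the paper's) can exclude the value $0$: the exclusion rests on the tacit identification of $2$-cardinals with set representations of Velleman's neat simplified morasses, in which $\theta_0=1$ forces rank-zero elements to be singletons. Granting that tacit assumption, your limit-case argument does complete the proof; but your proposed derivation of the singleton property from Lemma \ref{musequencelemma} and the coherence lemma \ref{coherence} is invalid, and cannot be repaired without strengthening the definition.
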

\begin{proof} This follows from the neatness of $\mu$ as in \ref{morass}.
\end{proof}

\begin{lemma}\label{mmodel} Let $\kappa$ be a regular cardinal and $\mu$ be a 
 $(\kappa,\kappa^+)$-cardinal. Suppose that $M\prec H(\kappa^{++})$ is an elementary
submodel which contains $\mu$. Let $\delta=M\cap\kappa\in\kappa$ and $\gamma_1<\gamma_2<\kappa^+$,
then
\begin{enumerate}
\item If $\gamma_1,\gamma_2\in M$, then $m(\gamma_1, \gamma_2)<\delta$ 
\item If $\gamma_1\not\in M$ and $\gamma_2\in M$, then $m(\gamma_1, \gamma_2)>\delta$
\end{enumerate}
\end{lemma}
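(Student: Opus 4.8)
The plan is to use the characterization of $\mu$-colorings via $\mu$-sequences together with Lemma \ref{musequencemodel}, which identifies $M\cap\alpha$ with the $\delta$-th term $\mu_\delta(\alpha)$ of the $\mu$-sequence at $\alpha$ for $\alpha\in M\cap\kappa^+$. The key link is the formula established in the proof of Proposition \ref{rho}(a), namely that for any $\alpha<\kappa^+$ and $\nu<\kappa$,
\begin{equation*}
\{\xi<\alpha:\ m(\xi,\alpha)\leq\nu\}=\mu_\nu(\alpha).
\end{equation*}
I would phrase both parts of the conclusion as statements about membership of $\gamma_1$ in the set $\mu_\delta(\gamma_2)$.

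For part (1), suppose $\gamma_1,\gamma_2\in M$. Since $\mu\in M\prec H(\kappa^{++})$ and both ordinals lie in $M$, by elementarity there is a witness $X\in\mu$ with $\gamma_1,\gamma_2\in X$ of some rank $\xi$, and $M$ can choose such an $X$ of minimal rank, i.e.\ of rank $m(\gamma_1,\gamma_2)$; this minimal rank is then an ordinal definable in $M$ from parameters in $M$, hence lies in $M\cap\kappa=\delta$. Thus $m(\gamma_1,\gamma_2)\in\delta$, which is exactly $m(\gamma_1,\gamma_2)<\delta$. Alternatively, and perhaps more cleanly, since $\gamma_1\in M\cap\gamma_2=\mu_\delta(\gamma_2)$ by Lemma \ref{musequencemodel}(1), and since $\delta$ is a limit ordinal, the continuity and monotonicity of the $\mu$-sequence (Lemma \ref{musequencelemma}) give some $\nu<\delta$ with $\gamma_1\in\mu_\nu(\gamma_2)$, whence $m(\gamma_1,\gamma_2)\leq\nu<\delta$ by the displayed formula.

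For part (2), suppose $\gamma_1\notin M$ and $\gamma_2\in M$. The claim $m(\gamma_1,\gamma_2)>\delta$ is equivalent, via the displayed formula, to $\gamma_1\notin\mu_\delta(\gamma_2)$. But $\mu_\delta(\gamma_2)=M\cap\gamma_2\subseteq M$ by Lemma \ref{musequencemodel}(1), and $\gamma_1\notin M$, so indeed $\gamma_1\notin\mu_\delta(\gamma_2)$, giving $m(\gamma_1,\gamma_2)>\delta$. The main thing to be careful about is the precise boundary behavior at $\delta$: one must confirm that $\mu_\delta(\gamma_2)$ equals $M\cap\gamma_2$ and not merely approximates it, which is exactly the content of Lemma \ref{musequencemodel}(1) together with the hypothesis $\delta=M\cap\kappa\in\kappa$ forcing $\delta$ to be a limit ordinal. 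No genuine obstacle arises beyond invoking these two facts correctly; the entire statement is a transparent reinterpretation of the model-theoretic identification of initial segments as terms of the $\mu$-sequence.
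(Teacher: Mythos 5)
Your proposal is correct, and for part (1) your first argument (definability of $m(\gamma_1,\gamma_2)$ in $M$ from parameters in $M$) is exactly the paper's one-line proof. For part (2), however, you take a genuinely different route. The paper argues by contraposition and never identifies $M\cap\gamma_2$ with $\mu_\delta(\gamma_2)$: assuming $m(\gamma_1,\gamma_2)>\delta$ fails, it first invokes Proposition \ref{neatsuccessor} (all values of $m_\mu$ are successor ordinals, while $\delta=M\cap\kappa$ is a limit ordinal) to rule out the case $m(\gamma_1,\gamma_2)=\delta$, so that $m(\gamma_1,\gamma_2)<\delta$; then $m(\gamma_1,\gamma_2)\in M$, hence $\mu_{m(\gamma_1,\gamma_2)}(\gamma_2)\in M$, and this set is moreover a subset of $M$ because $M\cap\kappa$ is an ordinal (its order type lies in $\delta\subseteq M$, so every element of it is definable in $M$); since $\gamma_1$ belongs to it, $\gamma_1\in M$, a contradiction. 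Your direct argument --- $m(\gamma_1,\gamma_2)>\delta$ if and only if $\gamma_1\notin\mu_\delta(\gamma_2)$, and $\mu_\delta(\gamma_2)=M\cap\gamma_2$ misses $\gamma_1$ --- is shorter and disposes of the boundary case $m(\gamma_1,\gamma_2)=\delta$ with no appeal to \ref{neatsuccessor}, since the characterization $\{\xi<\gamma_2:\ m(\xi,\gamma_2)\leq\delta\}=\mu_\delta(\gamma_2)$ absorbs it. The one thing you should patch: Lemma \ref{musequencemodel} is stated for $M$ of cardinality less than $\kappa$, a hypothesis that \ref{mmodel} does not assume. Your use of it is legitimate, because the proof of \ref{musequencemodel}(1) uses only $\mu\in M$, elementarity, and $M\cap\kappa=\delta\in\kappa$ (never the cardinality bound), but this should be said explicitly; the paper's contrapositive argument is self-contained under the hypotheses of \ref{mmodel} exactly as stated, which is presumably why it avoids citing \ref{musequencemodel} in the first place.
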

\begin{proof}
(1) is clear as $m(\gamma_1, \gamma_2)$ is an element of $\kappa$
definable in $M$.  For (2) suppose
 that $m(\gamma_1, \gamma_2)>\delta$ does not hold and note that by \ref{neatsuccessor}
this means that $m(\gamma_1, \gamma_2)<\delta$, so  $m(\gamma_1, \gamma_2)$ is
in $M$ and hence
 $\mu_{m(\gamma_1, \gamma_2)}(\gamma_2)$ ($\mu$-sequence as in \ref{musequencedef})
is in $M$. Then it must be a subset of $M$ since $M\cap\kappa$ is an ordinal.
But $\gamma_1$ belongs to it, so $\gamma_1\in M$.

\end{proof}

In the case of a $\mu$-coloring where $\mu$ is a 2-cardinal we can obtain some more
concrete information corresponding to (a) and (d) of \ref{rho} included in the following two propositions.

\begin{proposition}Let $\kappa$ be a regular cardinal and
$\mu$ be a $(\kappa, \kappa^+)$-cardinal such that $|X|<rank(X)^+$ for all $X\in\mu$. 
Let $m\colon[\kappa^+]^2
\rightarrow\kappa$ be a $\mu$-coloring.
Let $\alpha<\beta<\gamma<\kappa^+$, $\nu<\kappa$. Then
$$|\{\xi<\alpha:\ m(\xi,\alpha)\leq\nu\}|<\nu^+ .$$

\end{proposition}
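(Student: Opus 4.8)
The plan is to use the proof of part (a) of Proposition \ref{rho} as a black box and then refine it by estimating the cardinality of the relevant $\mu$-sequence term more carefully. Recall from the proof of \ref{rho}(a) that the set $\{\xi<\alpha:\ m(\xi,\alpha)\leq\nu\}$ is exactly equal to $\mu_\nu(\alpha)=X_\nu\cap\alpha$, where $X_\nu\in\mu$ is the element of rank $\nu$ containing $\alpha$ (guaranteed to exist by the density lemma \ref{density} together with Lemma \ref{zerorank} and the well-definedness of $\mu$-sequences). So the entire task reduces to bounding $|\mu_\nu(\alpha)|$, and since $\mu_\nu(\alpha)=X_\nu\cap\alpha\subseteq X_\nu$, it suffices to bound $|X_\nu|$.

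The key step is to invoke the extra hypothesis $|X|<rank(X)^+$ for all $X\in\mu$. Applying this to $X=X_\nu$, which has $rank(X_\nu)=\nu$, gives immediately $|X_\nu|<\nu^+$. Hence $|\{\xi<\alpha:\ m(\xi,\alpha)\leq\nu\}|=|\mu_\nu(\alpha)|\leq|X_\nu|<\nu^+$, which is exactly the claimed inequality. First I would state the identification of the set with $\mu_\nu(\alpha)$, citing \ref{rho}(a) (or reproving it in one line via the definition of $m$, the coherence lemma \ref{coherence}, and Lemma \ref{musequencelemma}), and then apply the cardinality hypothesis to the single element $X_\nu$.

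I expect no serious obstacle here; this is essentially a one-line strengthening of \ref{rho}(a) once the set is recognized as a term of the $\mu$-sequence. The only point requiring a moment's care is confirming that the rank of the witnessing element $X_\nu$ is precisely $\nu$ (not merely $\leq\nu$), so that the bound $rank(X_\nu)^+=\nu^+$ is the one we want; this is built into the definition \ref{musequencedef} of the $\mu$-sequence, where $\mu_\nu(\alpha)=X_\nu\cap\alpha$ for $X_\nu$ of rank exactly $\nu$ containing $\alpha$. With that observation the estimate $|X_\nu|<\nu^+$ follows directly from the standing assumption on $\mu$, completing the argument.
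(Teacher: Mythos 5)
Your proposal is correct and is precisely the argument the paper intends: its proof reads only ``It is like (a) of \ref{rho}'', i.e., identify $\{\xi<\alpha:\ m(\xi,\alpha)\leq\nu\}$ with $\mu_\nu(\alpha)=X_\nu\cap\alpha$ for $X_\nu\in\mu$ of rank exactly $\nu$ containing $\alpha$, and then bound $|X_\nu|<rank(X_\nu)^+=\nu^+$ by the standing hypothesis on $\mu$. Your extra care about the rank being exactly $\nu$ (via Definition \ref{musequencedef}) is exactly the point that makes the refinement work.
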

\begin{proof}
It is like (a) of \ref{rho}.
\end{proof}

\begin{proposition}\label{version4}
Suppose that $\kappa$ is a regular cardinal, $\mu$ is a  $(\kappa,\kappa^+)$-cardinal
and $m$ is the $\mu$-coloring. Let $\delta<\kappa$ be a limit ordinal and let $\tau<\epsilon<\kappa^+$.

\noindent There is $\zeta=\zeta(\tau,\epsilon,\delta)<\delta$  such that whenever $\xi<\tau$ satisfies
$\zeta(\tau,\epsilon,\delta)<m(\xi,\tau)<\delta$,
then $m(\xi,\tau)\leq m(\xi,\epsilon)$.
\end{proposition}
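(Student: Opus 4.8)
The plan is to deduce Proposition \ref{version4} from a symmetric auxiliary statement proved by induction on the color $m(\tau,\epsilon)$, paralleling the proof of \ref{rho}(d). For a pair $a<b<\kappa^+$ and a limit $\delta<\kappa$ set
$$D(a,b,\delta)=\{\xi<a:\ m(\xi,a)\neq m(\xi,b)\ \hbox{and}\ \max(m(\xi,a),m(\xi,b))<\delta\},$$
and let $T(a,b,\delta)$ be the assertion that $\sup\{\max(m(\xi,a),m(\xi,b)):\xi\in D(a,b,\delta)\}<\delta$. First I note that $T$ yields the proposition: put $c=m(\tau,\epsilon)$, take $\zeta<\delta$ witnessing $T(\tau,\epsilon,\delta)$, and suppose $\xi<\tau$ with $\zeta<m(\xi,\tau)<\delta$. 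If $m(\xi,\tau)>m(\xi,\epsilon)$ then $\xi\in D(\tau,\epsilon,\delta)$ with $\max=m(\xi,\tau)$, so $m(\xi,\tau)\leq\zeta$, a contradiction; hence $m(\xi,\tau)\leq m(\xi,\epsilon)$. I also record, via \ref{mequivalence}(2), that if $m(\xi,a)\neq m(\xi,b)$ then both values are $\leq m(a,b)$; in particular, since all colors are successors by \ref{neatsuccessor} while $\delta$ is a limit, the case $c<\delta$ is immediate with $\zeta=c$, because then every $\xi\in D(\tau,\epsilon,\delta)$ has $\max\leq c<\delta$.

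The induction on $c=m(a,b)$ treats the remaining Subcase $c>\delta$ (so $c-1\geq\delta$). Let $X\in\mu$ have minimal rank $c$ with $a,b\in X$; by \ref{morass}(5b) write $X=X_1*X_2$, and by minimality $a\in X_1\setminus X_2$ and $b\in X_2\setminus X_1$, with $rank(X_1)=rank(X_2)=c-1$. Take $\xi\in D(a,b,\delta)$, so $m(\xi,a)<\delta<c$; by \ref{rho}(a) and coherence $\{\xi<a:m(\xi,a)\leq c-1\}=X_1\cap a$, hence $\xi\in X_1$. If $\xi\in X_1\setminus X_2$, then no element of $\mu|X=(\mu|X_1)\cup(\mu|X_2)\cup\{X_1,X_2\}$ of rank below $c$ contains both $\xi$ and $b$ (by the localization lemma \ref{localization} any witness of $m(\xi,b)$ may be taken inside $X$), so $m(\xi,b)=c>\delta$, contradicting $\xi\in D(a,b,\delta)$; thus such $\xi$ do not occur. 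Therefore $\xi\in X_1\cap X_2$. Writing $h=f_{X_1X_2}$, which fixes $X_1\cap X_2$ pointwise by \ref{coherence}, homogeneity \ref{morass}(3) together with \ref{localization} lets me compute $m(\xi,b)$ inside $X_2$ and transport it, giving $m(\xi,b)=m(\xi,h(b))$ with $h(b)\in X_1\setminus X_2$; meanwhile $m(\xi,a)$ is already computed inside $X_1$. So the whole comparison now lives inside $X_1$, between $m(\xi,a)$ and $m(\xi,h(b))$.

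Finally I close the induction. If $h(b)=a$ then $m(\xi,a)=m(\xi,b)$, excluded. Otherwise let $\{a_1,a_2\}=\{a,h(b)\}$ with $a_1<a_2$; both lie in $X_1\setminus X_2$, so $m(a_1,a_2)\leq c-1<c$, and since $\xi\in X_1\cap X_2<X_1\setminus X_2$ we have $\xi<a_1$. Because $m(\xi,b)=m(\xi,h(b))$, the condition $m(\xi,a)\neq m(\xi,b)$ becomes $m(\xi,a_1)\neq m(\xi,a_2)$ and $\max(m(\xi,a),m(\xi,b))=\max(m(\xi,a_1),m(\xi,a_2))<\delta$, i.e. $\xi\in D(a_1,a_2,\delta)$. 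The inductive hypothesis $T(a_1,a_2,\delta)$ (legitimate as $m(a_1,a_2)<c$) bounds $\max(m(\xi,a_1),m(\xi,a_2))$ by some $\zeta^*<\delta$, hence $\max(m(\xi,a),m(\xi,b))\leq\zeta^*<\delta$ for all such $\xi$, establishing $T(a,b,\delta)$. I expect the \textbf{main obstacle} to be exactly the asymmetry of the naive approach: comparing $m(\xi,\tau)$ and $m(\xi,\epsilon)$ directly, the transported point $h(\epsilon)$ may fall \emph{below} $\tau$, and then a one-directional induction produces the inequality in the wrong direction. Passing to the symmetric quantity $\max(m(\xi,a),m(\xi,b))$ makes the statement invariant under relabeling inside $X_1$, so that each inductive step descends to a strictly smaller color $m(a_1,a_2)<c$; well-foundedness of the ranks guarantees termination.
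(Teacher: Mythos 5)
Your proof is correct, but it takes a genuinely different route from the paper's. You prove the symmetric auxiliary statement $T(a,b,\delta)$ by induction on the colour $c=m(a,b)$, descending through the decomposition $X=X_1*X_2$ of a minimal-rank witness and transporting $b$ to $h(b)=f_{X_1X_2}(b)\in X_1\setminus X_2$, so that each inductive step replaces the pair by one of strictly smaller colour; this mirrors the paper's inductive proof of part (d) of Proposition \ref{rho}, and the ingredients you invoke (rank preservation under the homogeneity maps, the localization lemma \ref{localization}, the contrapositive of Corollary \ref{mequivalence}(2)) are used exactly as the paper uses them elsewhere, so your level of detail matches the paper's. The paper's own proof of Proposition \ref{version4} is instead direct and non-inductive: it fixes two sets $X\ni\tau$, $Y\ni\epsilon$ of rank exactly $\delta$ and exhibits an explicit $\zeta$, namely $\zeta=m(\tau,\epsilon)$ when $m(\tau,\epsilon)<\delta$, $\zeta=m(\tau,f_{XY}(\epsilon))+1$ when $m(\tau,\epsilon)>\delta$ and $f_{XY}(\epsilon)\neq\tau$ (this is automatically below $\delta$, both points lying in the rank-$\delta$ set $X$), and $\zeta=0$ otherwise; a short case analysis combining \ref{mequivalence}(2) with a single homogeneity transport between $X$ and $Y$ then finishes. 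In effect the paper jumps in one step to rank $\delta$, where your induction may pass through many colours before falling below $\delta$. What your version buys is a symmetric, relabeling-invariant statement that needs no choice of rank-$\delta$ approximations and sidesteps the asymmetry you correctly identify as the danger of a naive direct attack; what the paper's version buys is brevity and an explicitly defined $\zeta(\tau,\epsilon,\delta)$, in keeping with its stated preference for explicit rather than recursive definitions.
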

\begin{proof}
Let $X, Y\in\mu$ be such that $\tau\in X$, $\epsilon\in Y$,  and $rank(X)=rank(Y)=\delta$.
The existence of these sets follows from Lemma \ref{zerorank} and the density lemma.
Note that $X\cap Y<X\setminus Y, Y\setminus X$.
Using the homogeneity as in \ref{morass} there is an order preserving 
function $f_{XY}:Y\rightarrow X$ (If $X=Y$ we just put $f_{XY}=Id_X$).  

We claim that if $m(\tau,\epsilon)<\delta$, then $\zeta=m(\tau,\epsilon)$ works;
if $m(\tau,\epsilon)>\delta$ and 
$\tau\not=f_{XY}(\epsilon)$, then $\zeta=m(\tau, f_{XY}(\epsilon))+1$ works; 
and otherwise $\zeta=0$ works.
First note that $\zeta<\delta$ by \ref{neatsuccessor} in all these cases.
We will consider two cases with subcases.

\noindent{\bf Case 1.} $\tau\in X\cap Y$.

By \ref{neatsuccessor}, $\zeta=m(\tau,\epsilon)<\delta$.
Now if $\xi<\tau<\epsilon$ and $m(\xi,\tau)>\zeta$, we can apply \ref{mequivalence} (2)
to conclude that $m(\xi,\tau)=m(\xi,\epsilon)$, that is $\zeta=m(\tau,\epsilon)$ 
works.

\noindent{\bf Case 2.} $\tau\in X\setminus Y$.

The condition $m(\xi,\tau)<\delta$ from the statement of the proposition
yields $\xi\in X$, and so we may consider only $\xi \in X$.
Moreover, in  this case we may consider only
$\xi\in X\cap Y$, as the other $\xi$'s satisfying
$m(\xi,\tau)<\delta$, $\xi<\tau$ are in $X\setminus Y$
and so, since they satisfy $\xi<\epsilon$ (as $\tau<\epsilon$), we have that
$m(\xi,\tau)\leq\delta<m(\xi,\epsilon)$. So 
 $\zeta=0$ works for $\xi\in(X\setminus Y)\cap\tau$.

\noindent{\bf Case 2.1.} $f_{XY}(\epsilon)=\tau$.

Then   $m(\xi, \tau)=m(\xi, f_{XY}(\epsilon))=m(\xi, \epsilon)$
for $\xi\in X\cap Y$ by the homogeneity \ref{morass}.

\noindent{\bf Case 2.2.} $f_{XY}(\epsilon)\not =\tau$.

As we are in Case 2. we have $\tau\in X\setminus Y$ and so
$\epsilon\in Y\setminus X$ and so $f_{XY}(\epsilon)\in X\setminus Y$.
Since as before  we may assume that $\xi\in X\cap Y$, we
conclude that $\xi<\tau, f_{XY}(\epsilon)$.
In this situation,  if 
$$m(\tau, f_{XY}(\epsilon))<m(\tau, f_{XY}(\epsilon))+1=\zeta< m(\xi,\tau),$$ we may use
\ref{mequivalence} (2) to conclude that $m(\xi, \tau)=m(\xi, f_{XY}(\epsilon))$.
However as $\xi\in X\cap Y$ we have $m(\xi, f_{XY}(\epsilon))=m(\xi,\epsilon)$
which completes the proof.
\end{proof}

\begin{proposition}\label{velickovic} Let $\lambda$ be  an infinite regular cardinal,  
such that $\lambda^{<\lambda}=\lambda$ and  let $\kappa=\lambda^+$. Assume that 
$\mu$ is a $(\kappa,\kappa^+)$-cardinal and $m$ is the $\mu$-coloring. Suppose that
 $\{a_\xi:\ \xi\in\kappa\}$ is a collection of  subsets of $\kappa^+$ of cardinalities smaller than $\lambda$.
Then there is $A\subseteq\kappa$ of cardinality $\kappa$
such that for any  $\xi,\eta\in A$ we have satisfied the following relations:
if  $\tau\in a_\xi\cap a_{\eta}$, $\alpha\in a_\xi-a_{\eta}$,
 $\beta\in a_{\eta}-a_\xi$, then
\begin{enumerate}

\item $\beta>\tau\ \Rightarrow\ m(\alpha,\tau)\leq m(\alpha,\beta),$
\item $\alpha>\tau\ \Rightarrow\ m(\beta,\tau)\leq m(\alpha,\beta).$
\end{enumerate}
\end{proposition}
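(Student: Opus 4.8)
The plan is to combine the $\Delta$-system lemma with the elementary-submodel dichotomy of Lemma \ref{mmodel} and the triangle inequalities of Proposition \ref{rho}. First I would apply the $\Delta$-system (Shanin) lemma to $\{a_\xi:\xi<\kappa\}$: since $\kappa=\lambda^+$ is regular and $|\alpha|^{<\lambda}\le\lambda^{<\lambda}=\lambda<\kappa$ for every $\alpha<\kappa$, there is $A_1\subseteq\kappa$ with $|A_1|=\kappa$ forming a $\Delta$-system with root $R_0$, so that $a_\xi\cap a_\eta=R_0$ for distinct $\xi,\eta\in A_1$. Writing $t_\xi=a_\xi\setminus R_0$ for the tails, these are pairwise disjoint and disjoint from $R_0$; thus in the statement $\tau\in R_0$ while $\alpha\in t_\xi$ and $\beta\in t_\eta$ lie in disjoint tails, and all three are distinct.

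Next I would build, by recursion on $i<\kappa$, an increasing chain $\langle N_i:i<\kappa\rangle$ of elementary submodels of $H(\kappa^{++})$ together with indices $\xi_i\in A_1$, so that each $N_i$ has size $\lambda$, contains $\lambda\cup\{\mu,m,A_1,R_0\}$ and the sequence of the $a_\xi$'s, and satisfies $\xi_j,a_{\xi_j}\in N_i$ for all $j<i$, while $\xi_i$ is chosen with $t_{\xi_i}\cap N_i=\emptyset$. Such a $\xi_i$ exists, since the tails are pairwise disjoint and $|N_i\cap\kappa^+|\le\lambda$, so at most $\lambda$ tails meet $N_i$, which together with the fewer than $\kappa$ indices already used excludes fewer than $\kappa$ choices. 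Put $\delta_i=N_i\cap\kappa$, an ordinal in $\kappa$ because $\lambda\subseteq N_i$ and $|N_i|=\lambda<\kappa$, and set $A=\{\xi_i:i<\kappa\}$. The decisive feature is that for $j<i$ one has $R_0\cup a_{\xi_j}\subseteq N_i$ (each $a_{\xi_j}\in N_i$ has size $<\lambda\subseteq N_i$, hence is a subset of $N_i$) whereas $t_{\xi_i}\cap N_i=\emptyset$; so $N_i$ \emph{separates} the earlier set $a_{\xi_j}$ from the later tail $t_{\xi_i}$.

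Finally I would verify (1) and (2) for a pair $\{\xi_j,\xi_i\}$ with $j<i$, in both orientations, using Lemma \ref{mmodel}: if both arguments lie in $N_i$ the $m$-value is $<\delta_i$, and if exactly one lies in $N_i$ it is $>\delta_i$. Fix $\tau\in R_0\subseteq N_i$. In the orientation $(\xi,\eta)=(\xi_j,\xi_i)$ we have $\alpha\in t_{\xi_j}\subseteq N_i$ and $\beta\in t_{\xi_i}\notin N_i$, whence $m(\alpha,\tau)<\delta_i<m(\alpha,\beta),m(\beta,\tau)$; condition (1) is then immediate, and for condition (2), whose hypothesis $\tau<\alpha$ makes $\alpha$ the middle or the largest of $\{\tau,\alpha,\beta\}$, the appropriate instance of Proposition \ref{rho}(b) or (c) with apex $\alpha$ gives $m(\beta,\tau)\le\max\{m(\alpha,\beta),m(\alpha,\tau)\}$, and since $m(\alpha,\tau)<\delta_i<m(\beta,\tau)$ this forces $m(\beta,\tau)\le m(\alpha,\beta)$. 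The orientation $(\xi,\eta)=(\xi_i,\xi_j)$ is symmetric: now $m(\beta,\tau)<\delta_i<m(\alpha,\beta),m(\alpha,\tau)$, so condition (2) is immediate, while condition (1) (hypothesis $\tau<\beta$, making $\beta$ middle or largest) follows by routing Proposition \ref{rho}(b)/(c) through the apex $\beta$.

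I expect the main obstacle to be exactly this last routing. The two genuinely hard cases are those in which both competing values $m(\beta,\tau)$ and $m(\alpha,\beta)$ are large, above $\delta_i$, so the crude in/out dichotomy cannot compare them directly; the comparison succeeds only because one of them is bounded, via the triangle inequalities, by the maximum of the other with the single \emph{small} value that the model forces below $\delta_i$. This is also precisely where the order hypotheses $\tau<\beta$ in (1) and $\tau<\alpha$ in (2) are consumed: they guarantee that the common point of the small value is an endpoint of the triple, so that the inequalities (b),(c) of Proposition \ref{rho} apply with the correct apex. That the cross-tail values dominate in the first place is secured structurally by the asymmetric placement $a_{\xi_j}\subseteq N_i$, $t_{\xi_i}\cap N_i=\emptyset$, which rests on the pairwise disjointness of the $\Delta$-system tails.
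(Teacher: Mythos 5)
Your overall scaffolding (the $\Delta$-system reduction, the recursion on elementary submodels $N_i$ with $a_{\xi_j}\subseteq N_i$ for $j<i$ and $t_{\xi_i}\cap N_i=\emptyset$, and the routing of the conclusion through \ref{rho}(b)/(c)) matches the spirit of the paper's proof, but the step you call "decisive" is a misquotation of Lemma \ref{mmodel}, and the proof collapses there. Lemma \ref{mmodel}(2) is asymmetric: it gives $m(\gamma_1,\gamma_2)>\delta$ only when $\gamma_1\notin M$, $\gamma_2\in M$ and $\gamma_1<\gamma_2$, i.e., when the point \emph{outside} the model is the \emph{smaller} one. Your blanket claim that ``if exactly one argument lies in $N_i$ the $m$-value is $>\delta_i$'' uses the other direction ($\alpha\in N_i$, $\beta\notin N_i$, $\alpha<\beta$), and that direction is provably false for every $(\kappa,\kappa^+)$-cardinal. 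Indeed, for any $\alpha$ the interval $(\alpha,\kappa^+)$ is the union over $\rho<\kappa$ of the sets $U_\rho(\alpha)=\{\beta>\alpha:\ m(\alpha,\beta)\leq\rho\}$, so by regularity of $\kappa^+$ some $U_\rho(\alpha)$ is unbounded in $\kappa^+$; if $\alpha\in N_i$, the least such $\rho$ is definable in $N_i$, hence $\rho<\delta_i$, and then $U_\rho(\alpha)\in N_i$ is unbounded and of size $\kappa^+>|N_i|$, so it contains points $\beta\notin N_i$ with $m(\alpha,\beta)\leq\rho<\delta_i$. Thus choosing $t_{\xi_i}$ disjoint from $N_i$ in no way prevents its points from having small $m$-distance to points of $N_i$ lying below them, and in exactly the sub-cases you flag as ``genuinely hard'' (the outside point $\beta$ is the largest of the triple: condition (1) with $\alpha<\beta$, condition (2) with $\tau<\beta$, and their mirror images in the other orientation) the inequalities $m(\alpha,\beta)>\delta_i$ and $m(\beta,\tau)>\delta_i$ that you feed into the triangle inequalities are unjustified. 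Your argument is sound only in the configuration corresponding to the paper's Case 1, where the outside point is below the inside maximum and \ref{mmodel}(2) genuinely applies.

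This is not a removable blemish but the actual crux: the paper's proof is organized around the fact that $m(\tau,\beta)<\delta$ \emph{can} happen for $\tau\in M$, $\tau<\beta$, $\beta\notin M$. Rather than excluding it, the paper constrains which sets $a_\xi$ inside the model may be paired with the fixed outside set $a_\eta$: condition (c) makes the tails $a_\xi\setminus\Delta$ avoid $\mu_{\delta'}(\pi)$, where $\delta'$ is chosen by \ref{musequencecofcof} so that $\mu_{m(\tau,\beta)}(\beta)\cap M\subseteq\mu_{\delta'}(\pi)$ (this rescues the case $\tau<\alpha<\beta$), and condition (e) together with Proposition \ref{version4} and its threshold $\zeta(\tau,\beta,\delta)$ handles the case $\alpha<\tau<\beta$, which no in/out dichotomy can reach. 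None of this machinery (nor any substitute for it) appears in your proposal, so the two hard cases remain unproved; to repair the argument you would essentially have to reconstruct conditions (b)--(e) of the paper's Claim and prove something like Proposition \ref{version4}.
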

\begin{proof}

 Using the hypothesis $\lambda^{<\lambda}=\lambda$ we may apply the 
$\Delta$-system lemma (1.6. of \cite{kunen}) and we  may w.l.o.g. assume that 
$(a_\xi:\ \xi<\kappa)$ is a $\Delta$-system with  root $\Delta$.

If the proposition is false, there are $A_\theta\subseteq \kappa$ such that $|A_\theta|<\kappa$
and $A_\theta<A_{\theta'}$ for each $\theta<\theta'<\kappa$ such that 
for each $\theta<\kappa$ and for every  $A_\theta<\eta_\theta<\kappa$ 
there is $\xi\in A_\theta$ such that the  pair $\xi, \eta_\theta$  does not satisfy the relations as in
the proposition. Indeed, otherwise for some $\xi<\kappa$ one could build $A\subseteq \kappa\setminus\xi$ 
as in the proposition by recursion. So we will assume the existence of $A_\xi$s as above and will derive
a contradiction.

For the simplicity of the argument let us use an elementary submodel (see a
survey of A. Dow \cite{dow} for standard methods concerning the applications of
elementary submodels). So let
$M\prec H(\kappa^{++})$ be of cardinality
 $\lambda$ and such that $[M]^{<\lambda}\subseteq M$ and 
$\lambda,\mu, \{a_\xi: \xi<\kappa\}, \{A_\theta: \theta<\kappa\} \in M$.
Moreover let $\delta=M\cap\kappa\in\kappa$ be such  that $cf(\delta)=\lambda$.

Let $\eta<\kappa$ be such that $\eta\not\in M$. It follows that
$(a_\eta\setminus\Delta)\cap M=\emptyset$ as the elements of $a_\eta\setminus\Delta$ may belong to just one
set in $\{a_\xi\setminus\Delta:\xi<\kappa\}$, namely $a_\eta$.

Now we start the search for conditions on $\xi<\kappa$ which guarantee that
all the elements  $\alpha\in a_\xi\setminus a_\eta$,
$\beta\in a_\eta\setminus a_\xi$ and $\tau\in a_\xi\cap a_\eta$ satisfy (1) and (2). 
Later we will find a $\theta<\kappa$ with $A_\theta<\eta$ such that for each $\xi \in A_\theta$
the ordinals $\xi, \eta$ satisfy these conditions which will bring the required contradiction.

Let $\pi\in M$
be the minimal element of $\kappa^+$ bigger  than every element of  $a_\xi\setminus \Delta$
 for every $\xi<\kappa$.
Recalling \ref{version4} define:
$$\zeta=\sup\{\zeta(\tau, \beta,\delta):  \beta\in a_\eta\setminus\Delta, \tau \in \Delta\cap\beta \}.$$
As $cf(\delta)=\lambda$ and $\lambda$ is regular we conclude that
$$\zeta<\delta.\leqno (a)$$
Now let $\delta'<\kappa$ satisfy $\delta'<\delta$ and
$$\mu_{m(\tau,\beta)}(\beta)\cap M\subseteq \mu_{\delta'}(\pi)\leqno (b)$$
for any   $\beta\in a_\eta\setminus\Delta$ and any
$\tau\in\Delta$ with $m(\tau, \beta)<\delta$ .
The existence of such a  $\delta'$ for a single pair $\tau,\beta$ as above follows from
\ref{musequencecofcof} because $M\cap\beta\subseteq M\cap\pi=
\mu_\delta(\pi)$ by \ref{musequencemodel}. As $\Delta$ and $a_\eta\setminus\Delta$ have
cardinalites less than $\lambda$, the monotonicity of the $\mu$-sequence \ref{musequencelemma}
and $cf(\delta)=\lambda$ imply that we can find $\delta'$ that does the job
for all  $\tau$s and $\beta$s as above. 

\noindent{\bf Claim:} If $\xi\in M\cap\kappa$ satisfies

\begin{enumerate}[(c)]
\item[(c)] $(a_\xi\setminus \Delta)\cap\mu_{\delta'}(\pi)=\emptyset$,
\item[(d)]  $\zeta<m(\alpha,\tau)$ for every $\tau\in \Delta$ and $\alpha\in (a_\xi\setminus\Delta)\cap\tau$,
\end{enumerate}
then the relations from the statement of the proposition 
are satisfied for $\xi$ and $\eta$.

 \noindent Proof of the claim:
By \ref{mmodel} we can improve (d) to  
\begin{enumerate}[(d')]
\item $\zeta<m(\alpha,\tau)<\delta$ for every $\tau\in \Delta$ and $\alpha\in (a_\xi\setminus\Delta)\cap\tau$
\end{enumerate}
   as
$a_\xi,\tau\in M$. Let $\alpha, \beta, \tau$ be as in the proposition. Note
that we may assume that $\tau\not=\max\{\alpha,\beta,\tau\}$.

\noindent{\bf Case 1.} $\alpha=\max\{\alpha,\beta,\tau\}$.

We have $m(\tau,\alpha)<\delta<m(\beta, \alpha)$ by \ref{mmodel}.
As $m(\tau,\beta)\leq \max(m(\tau,\alpha), m(\beta,\alpha))$ by \ref{rho} (c)
we also have $m(\tau,\beta)\leq m(\beta,\alpha)$.

\noindent{\bf Case 2.} $\tau<\alpha<\beta$.

First assume that 
$m(\tau,\beta)<\delta$.  By (b) and (c) above  $m(\tau,\beta)<m(\alpha,\beta)$
since $\mu$-sequence at $\beta$ is  nondecreasing by \ref{musequencelemma}.
By \ref{rho} (c) we have $m(\tau, \alpha)\leq\max(m(\tau,\beta), m(\alpha,\beta))$
and so $m(\tau,\alpha)\leq m(\alpha,\beta)$ holds as well.

Now assume that 
$m(\tau,\beta)\geq\delta$ and so by \ref{mmodel} we 
have $m(\tau,\alpha)<\delta\leq m(\tau,\beta)$. By
(c)  of \ref{rho} we have $m(\tau,\beta)\leq\max(m(\tau,\alpha), m(\alpha,\beta))$
and so $m(\tau,\alpha)<m(\tau,\beta)\leq m(\alpha,\beta)$ follows. 

\noindent{\bf Case 3.} $\alpha<\tau<\beta$.

By  (d') and \ref{version4} we have that $m(\alpha,\tau)\leq m(\alpha, \beta)$.
 This completes the proof of the claim.

By the claim to obtain the required contradiction with our initial assumption it is enough to
find
$\theta<\kappa$ such that $A_\theta\subseteq M$ and (c), (d) are satisfied for
each $\xi\in A_\theta$. But $\zeta, \delta', \Delta$ are all elements of $M$, so
using the pairwise disjointness of the $A_\theta$s 
and so of the sets $B_\theta=\{a_\xi\setminus \Delta: \xi\in A_\theta\}$ it is easy to find in $M$
 a $\theta<\kappa$ satisfying 
$B_\theta\cap\mu_{\delta'}(\pi)=\emptyset$, 
$B_\theta\cap\bigcup\{\mu_{\zeta}(\tau): \tau\in \Delta\}=\emptyset$.
Then we also have $A_\theta, B_\theta\subseteq M$ as $\lambda\subseteq \delta\subseteq M$ and these
are sets of cardinalities not bigger than $\lambda$. 
But this guarantees (c) and (d) for
each $\xi\in A_\theta$, gives the required contradiction with the definition of $A_\theta$ and
completes the proof of the proposition.
\end{proof}

\vfill
\break
\begin{definition}[\cite{bs}]\label{deltaproperty} A
function $f:[\lambda^{++}]^2\rightarrow[\lambda^{++}]^{\leq\lambda}$ is said to have
 property $\Delta$ if and only if  
 whenever $\{a_\xi:\xi<\lambda^+\}$ is a collection of subsets of $\lambda^{++}$
of cardinalities $<\lambda$,
then there are $\xi,\xi'<\lambda^+$ satisfying the following $\Delta$-relations:
for any $\tau\in a_\xi\cap a_{\xi'}$,
$\alpha\in a_\xi- a_{\xi'}$, $\beta\in a_{\xi'}-a_{\xi}$ we have
\begin{enumerate}
\item $\tau<\alpha,\beta\ \Rightarrow\ \tau\in f(\alpha,\beta)$
\item $\beta>\tau\ \Rightarrow\ f(\alpha,\tau)\subseteq f(\alpha,\beta)$
\item $\alpha>\tau\ \Rightarrow\ f(\beta,\tau)\subseteq f(\alpha,\beta)$
\end{enumerate}
We say that  property $\Delta$ is  collectionwise if and only if under the above hypothesis
 there is $A\subseteq \lambda^+$ of cardinality $\lambda^+$ such that the
$\Delta$-relations are satisfied for all distinct $\xi,\xi'\in A$.

\end{definition}

\begin{theorem} Suppose that $\lambda^{<\lambda}=\lambda$ is a regular cardinal and  that
$\mu$ is  a  $(\lambda^+,\lambda^{++})$-cardinal . Then there is a function
$f:[\lambda^{++}]^2\rightarrow [\lambda^{++}]^{\leq\lambda}$
with  collectionwise property $\Delta$.
\end{theorem}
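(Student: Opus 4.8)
The plan is to produce $f$ explicitly from the $\mu$-coloring $m\colon[\lambda^{++}]^2\to\lambda^{+}$ of Definition \ref{todorcevic} (here $\kappa=\lambda^{+}$, so $m$ takes values in $\lambda^{+}$) and then to read off property $\Delta$ from Proposition \ref{velickovic}. The decisive choice is to attach to a pair the initial segment of its \emph{smaller} member, cut at the coloring value: I set
$$f(\{\alpha,\beta\})=\mu_{m(\alpha,\beta)}\big(\min(\alpha,\beta)\big),$$
using the $\mu$-sequences of Definition \ref{musequencedef}. Since every element of $\mu$ has cardinality $<\kappa=\lambda^{+}$, each value has cardinality $\le\lambda$, so indeed $f\colon[\lambda^{++}]^2\to[\lambda^{++}]^{\le\lambda}$.

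Given a collection $\{a_\xi:\xi<\lambda^{+}\}$ of subsets of $\lambda^{++}$ of cardinalities $<\lambda$, I apply Proposition \ref{velickovic} verbatim (its hypotheses $\lambda^{<\lambda}=\lambda$ and $\kappa=\lambda^{+}$ are exactly ours) to obtain $A\subseteq\lambda^{+}$ of cardinality $\lambda^{+}$ so that for all $\xi,\eta\in A$ and all $\tau\in a_\xi\cap a_\eta$, $\alpha\in a_\xi\setminus a_\eta$, $\beta\in a_\eta\setminus a_\xi$ the two inequalities
$$\beta>\tau\ \Rightarrow\ m(\alpha,\tau)\le m(\alpha,\beta),\qquad \alpha>\tau\ \Rightarrow\ m(\beta,\tau)\le m(\alpha,\beta)$$
hold. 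Since these hold for \emph{all} pairs from $A$, verifying the three $\Delta$-relations of Definition \ref{deltaproperty} for such pairs will immediately yield the collectionwise form with witnessing set $A$.

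It then remains to convert the two inequalities into the three set-relations, and here the base point $\min(\alpha,\beta)$ pays off. Throughout I use the identity $\mu_\nu(\delta)=\{\gamma<\delta:m(\gamma,\delta)\le\nu\}$ (from the proof of \ref{rho}(a)), monotonicity of $\nu\mapsto\mu_\nu(\delta)$ (Lemma \ref{musequencelemma}), and the subadditivity $m(\gamma,\delta)\le\max\{m(\gamma,\tau),m(\tau,\delta)\}$ for $\gamma<\tau<\delta$ of \ref{rho}(b). For relation (1), $\tau<\alpha,\beta$ forces $\tau<\min(\alpha,\beta)$, and the matching Velickovic inequality (the first if $\alpha<\beta$, the second if $\beta<\alpha$) states precisely that $m(\tau,\min(\alpha,\beta))\le m(\alpha,\beta)$, i.e.\ $\tau\in f(\{\alpha,\beta\})$. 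For relations (2) and (3) I split on the position of the shared node $\tau$. If $\tau$ lies above the smaller private node, both sides are initial segments of the \emph{same} base point and the inclusion is immediate from the relevant single inequality by monotonicity. If $\tau$ lies below both private nodes, I take $\gamma$ in the left-hand segment (so $m(\gamma,\tau)$ is bounded by the left coloring value) and apply subadditivity at $\tau$ together with the Velickovic inequality bounding $m(\tau,\cdot)$ to get $m(\gamma,\cdot)\le m(\alpha,\beta)$, placing $\gamma$ in the right-hand segment.

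The step I would watch most carefully is the bookkeeping in relations (2) and (3): in each of the finitely many orderings of $\{\tau,\alpha,\beta\}$ one must select the correct Velickovic inequality and check that the base point on the smaller side never overshoots the one on the larger side, as this is exactly where the asymmetry between the two coloring inequalities is consumed. I do not anticipate any deeper difficulty, since the substantive work — simultaneously controlling $m$ on an index set of full size $\lambda^{+}$ — is already packaged in Proposition \ref{velickovic}, and the passage from $m$ to $f$ is a finite case analysis using only the elementary properties \ref{rho} and \ref{musequencelemma}.
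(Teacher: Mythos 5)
Your proposal is correct and matches the paper's proof essentially step for step: the paper also defines $f(\alpha,\beta)=\mu_{m(\alpha,\beta)}(\alpha)$ for $\alpha<\beta$ (i.e.\ the $\mu$-sequence at the smaller element cut at the coloring value), invokes Proposition \ref{velickovic} to get the witnessing set $A$, and then checks the three $\Delta$-relations by the same case split on the position of $\tau$. The only cosmetic difference is that where you run the element-wise subadditivity argument via \ref{rho}(b)--(c), the paper packages the same computation through the coherence identity $\mu_\nu(\tau)=\mu_\nu(\alpha)\cap\tau$ of Lemma \ref{mucoloringcoherence}.
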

\begin{proof} Let $\kappa=\lambda^+$.
 Let $m$ be a $\mu$-coloring and for $\alpha<\kappa^+$ let
$(\mu_\xi(\alpha))_{\xi<\kappa}$ be the $\mu$-sequence at $\alpha$.
Let $\alpha<\beta$, and  put 
$$f(\alpha,\beta)=\mu_{m(\alpha,\beta)}(\alpha)=\{\xi<\alpha:\ m(\xi,\alpha)\leq m(\alpha,\beta)\}.$$

Find $A$ as in  \ref{velickovic}. Now suppose, that $\alpha,\beta,\tau$
are as in Definition \ref{deltaproperty}.  To prove (1) note that in this case
\ref{velickovic} gives that $m(\tau,\alpha),\ m(\tau,\beta)\leq m(\alpha,\beta)$.

By the symmetry, in the proof of (2) and (3) we may assume that $\alpha<\beta$.
We have two cases  $\tau<\alpha<\beta$ and $\alpha<\tau<\beta$. In the first
case using (1) and (2) of \ref{velickovic} and Lemma \ref{mucoloringcoherence} we get
$$f(\tau,\alpha)=\mu_{m(\tau,\alpha)}(\tau)=\mu_{m(\tau,\alpha)}(\alpha)\cap\tau
\subseteq \mu_{m(\alpha,\beta)}(\alpha)=f(\alpha,\beta).$$
$$f(\tau,\beta)=\mu_{m(\tau,\beta)}(\tau)=\mu_{m(\tau,\beta)}(\beta)\cap\tau
\subseteq \mu_{m(\alpha,\beta)}(\beta)\cap\alpha=\mu_{m(\alpha,\beta)}(\alpha)=f(\alpha,\beta).$$
In the second case using (1) of \ref{velickovic}  we get
$$f(\alpha, \tau)=\mu_{m(\alpha,\tau)}(\alpha)
\subseteq \mu_{m(\alpha,\beta)}(\alpha)=f(\alpha,\beta).$$
\end{proof}

\vskip 26pt

\section{Generic stepping-up}

Inductive constructions along $(\kappa,\kappa^+)$-cardinals as in Section 3,
gaps and nonreflection inherent in them as in Section 4 and coherent partitions of pairs
as in Section 5 can be unleashed in the context of constructions of forcing notions. 
We obtain stronger versions of  all these phenomena in the generic extension.
Often it is the only way of stepping up of the above phenomena from $\wp_\kappa(\kappa)$
to $\wp_\kappa(\kappa^+)$. This is related to the fact that
$2$-cardinals cohabit with GCH in the constructible universe, and GCH gives some Ramsey
property of cardinals in the form of nice cases of the Erdos-Rado theorem. Thus if we want to
get rid of both Ramsey charged principles as the Chang's conjecture and GCH we need to
force $2^{<\kappa}$ above $\kappa$.

In this section we consider only $(\omega_1,\omega_2)$-cardinals,
that is, subfamilies of $[\omega_2]^{\omega}$ because preserving $\omega_1$ 
is by far the most important cardinal preservation in the context of generic extensions.

Probably the earliest problem of constructing a forcing with a stepping
up tool was  of adding a Kurepa
tree by a forcing notion satisfying the c.c.c. known as Generic Kurepa Hypothesis. It was 
shown by Jensen (unpublished, see \cite{jensensz}) that $\square_{\omega_1}$ implies that
a Kurepa tree can be added by a c.c.c. forcing notion\footnote{In \cite{jensensz} it is shown that
a Mahlo cardinal is sufficient and necessary for 
obtaining the consistency of nonexistence of a 
c.c.c. forcing which adds a Kurepa tree. Note that
it is clear that PFA implies that there is no
c.c.c. forcing which adds a Kurepa tree; deciding 
the tree ordering in the tree would require 
meeting only $\omega_1$ dense sets, thus the 
Kurepa tree would exist in the universe, but 
PFA implies the negation of the weak Kurepa Hypothesis
(see \cite{baumgartnerhandbook}).
}.
In \cite{boban}, Velickovic constructed a c.c.c. forcing as above
using directly the $\rho$-function based on $\square_{\omega_1}$.
Recall from Section 4 that the existence of a $(\omega_1,\omega_2)$-cardinal
already implies the existence of a Kurepa tree.

Common stepping up tools hidden in $2$-cardinals and used for construction of  c.c.c. forcings are
 functions $f:[\omega_2]^2\rightarrow [\omega_2]^\omega$ 
or  $f:[\omega_2]^2\rightarrow \omega_1$. The reason they appear in the proofs
of the c.c.c. of forcing notions which add some interesting structures on $\omega_2$
 is that many structures define an associated function $F:[\omega_2]^2\rightarrow \omega_2$.
If the forcing is to be c.c.c. for every $F(\alpha,\beta)$ there must be a countable
set $A_{\alpha,\beta}$ in the ground model such that $F(\alpha,\beta)\subseteq A_{\alpha,\beta}$.
In other words
if our forcing allows uncountably many possible values of $F(\alpha,\beta)$ it is not c.c.c.
So the forcings for the results mentioned above
 usually have the form $\PP\ni p=(a_p, S(a_p))$  such that
\begin{itemize}
\item $a_p\in [\omega_2]^{<\omega}$,
\item $S(a_p)$ is some finite structure,
\item the behavior of  $S(a_p)$ is limited on the pairs of $a_p$ by $f$.
\end{itemize}

A prototypical example of adding the third limiting condition above to the first two is
considered by Baumgartner in \cite{baumgartneraml} where the consistency of the existence
of a family of size $\omega_2$ of uncountable subsets of $\omega_1$ with finite pairwise
intersections (strong almost disjoint family) is proved. Baumgartner first constructs a 
 collection of size $\omega_2$ of uncountable subsets of $\omega_1$ with countable pairwise
intersections, and then requires the finite approximations to the elements of
a generic strong almost disjoint family to be included in the elements of the collection.
This does the trick needed for the c.c.c. of the forcing with the finite approximations.

\noindent In \cite{bs}, J. Baumgartner and S. Shelah  solve an important and long standing 
problem concerning scattered compact spaces or superatomic
Boolean algebras,  first forcing  a function with 
$\Delta$-property and then using it to define a c.c.c. forcing which adds
the Boolean algebra.  As we have seen in Section 5 one can naturally obtain
a function with  property $\Delta$ using a $2$-cardinal.  The result says that
it is consistent that there is a superatomic Boolean algebra of countable width and 
height $\omega_2$.
In this seminal paper  $S(a_p)$ is roughly a finite Boolean algebra
generated by elements indexed by $\omega\times a_p$ and if  two of the generators
$g_{\alpha,n}$, $g_{\beta,k}$ are incomparable, then their meet is in the algebra generated by
the generators with indices in $\omega\times f(\alpha, \beta)$ (compare with the construction in
Section 3). 
This construction had several refinements and modifications  in various directions (\cite{rabus},
\cite{init}, \cite{christina})

A weaker version of a function with $\Delta$-property often used is the following:

\begin{definition}[\cite{stevomach}]
A function $f:[\omega_2]^2\rightarrow \omega_1$ is called
unbounded if and only if for every uncountable pairwise disjoint family $A\subseteq [\omega_2]^{<\omega}$
of finite subsets of $\omega_2$, for every $\delta\in \omega_1$ there are distinct $a, b\in A$
such that $f(\alpha,\beta)>\delta$ for every $\alpha\in a$ and every $\beta\in b$.
\end{definition}

It is straightforward, for example using  property $\Delta$ to prove that the $\mu$-coloring
of Section 5 for an $(\omega_1,\omega_2)$-coloring is an unbounded function.
The existence of such a function
is equivalent to the negation of Chang's Conjecture, as 
shown in \S 3 of  \cite{stevomach}. For more on unbounded functions see \cite{stevocoherent}
or \cite{unbounded}.
A function is used in \cite{stevomach} to show that under MA$_{\omega_2}$
 Chang's conjecture is equivalent to the partition relation that
says that every coloring of $\omega_2^2$ into $\omega$ colors 
is constant on the product of some two infinite sets. 
Unbounded functions were also  used by Martinez and Soukup to force superatomic Boolean algebras with
prescribed cardinal sequences (see \cite{soukupunb}).

 A similar application of an {\it unbounded}
 function is presented in \cite{fans}, where it is 
 shown that the failure of Chang's conjecture and
 MA$_{\omega_2}$ imply that the product 
 $S(\omega_2)\times S(\omega_2)\times\omega_1$ is normal,
 where $S(\omega_2)$ denotes the sequential fan with 
 $\omega_2$-many spines.

One could interpret some of the uses of morasses for generic stepping up in the spirit of
our Section 3. For exampe
Irrgang in \cite{irrgang} defines a forcing by recursion along a morass. In our terminology and
approach presented in Section 3, this corresponds to defining a family  of countable forcing notions
$(\PP_X: X\in\mu)$ where $\mu$ is a $2$-cardinal together with the appropriate embeddings and then
making sure that the limit along the directed set is a c.c.c. notion of forcing.

In some cases however it is impossible to obtain a required  consistency 
by building a c.c.c. forcing using a stepping-up structure which can be added 
by forcing preserving CH.

In papers \cite{unbounded}, \cite{hajnal} we considered forcing notions with side conditions
in $2$-cardinals (and $2$-semi cardinals - semimorasses of \cite{semi}). 
This is a version of Todorcevic's method of models as side conditions in the case
when one considers matrices of models and not just $\in$-chains of models (see \S 4 of \cite{stevoside}). 
The point was that many of the elementary properties of $2$-cardinals simplify life
if one works with the Todorcevic's method assuming moreover that the models $M$
which appear as side conditions satisfy $M\cap\omega_2\in \mu$ where
$\mu$ is a $2$-cardinal. For this one takes a stationary $2$-cardinal, actually
it is even better to take stationary coding sets because then we have \ref{stationarycoding}. 
The forcings assume the form
$\PP\ni p=(a_p, S(a_p), \mathcal F_p)$  such that
\begin{itemize}
\item $a_p\in [\omega_2]^{<\omega}$,
\item $S(a_p)$ is some finite structure, 
\item $\mathcal F_p\in [\mu]^{<\omega}$,
\item the behaviour of  $S(a_p)$ on pairs $\{\alpha,\beta\}\subseteq a_p$
is limited by every $X\in \mathcal F_p$ such that $\alpha,\beta\in X$.
\end{itemize}

For example in \cite{hajnal} the distance  $|\phi_\alpha(\gamma)-\phi_\beta(\gamma)|$ 
fore some $\gamma$ between  two generically constructed
functions $\phi_\alpha$ and $\phi_\beta$ in $S(a_p)$ is limited by sums of order types of 
appropriate elements of $\mathcal F_p$ of rank not bigger than $\beta$.
The result is the solution of a problem of Hajnal by proving the consistency of
the existence of  a well-ordered $\omega_2$-chain of functions in $\omega_1^{\omega_1}$
modulo finite sets. It is also shown that such a chain cannot be added by a c.c.c. forcing
over a model of CH.
This method was extensively analyzed in the context of morasses by Morgan in \cite{morgancrm}

Having in mind forcing with side conditions in $2$-cardinals one can revise the use
of stepping up tools for obtaining c.c.c. notions of forcing. Namely, instead of 
obtaining  complicated functions $f:[\omega_2]^2\rightarrow [\omega_2]^{\omega}$
and then defining forcing notions $\PP\ni p=(a_p, S(a_p), F_p)$ as described above, in particular
satisfying $F_p(\{\alpha,\beta\})\subseteq f(\{\alpha,\beta\})$ one can directly consider
a forcing notion $\QQ\ni q=(a_p, F_p, \mathcal F_p)$ where one requires
$F_p(\{\alpha,\beta\})\subseteq  X$ for every $X\in \mathcal F_p$ such that $\alpha,\beta\in X$.
This way one can force directly (without using property $\Delta$)  a superatomic algebra
of  Baumgartner and Shelah like in Section 3.3. of \cite{unbounded}.
Actually in \cite{christina} this route was taken and a stronger property than 
property $\Delta$ was obtained where   one requires
$a_\xi\cap\min \{\alpha,\beta\}\subseteq f(\alpha,\beta)$ instead of just $a_\xi\cap a_{\xi'}\cap
\min\{\alpha,\beta\}\subseteq f(\alpha,\beta)$ of (1) of Definition \ref{deltaproperty}.
It turned out that a function with such a property $\Delta$ cannot exist under CH unlike
the usual property  $\Delta$.

The final conclusions in \cite{christina} refer to topology  as well as Banach spaces and
Boolean algebras. For example we answer a question of Todorcevic from \cite{stevoirr}
showing that it is consistent that there are countably irredundant Boolean algebras 
of size $\omega_2$, or we obtain the first example of a Banach space of density $\omega_2$
without uncountable biorthogonal systems. The Banach space is of the form $C(K)$
where compact $K$ exhibits several new topological properties.

\section{Towards $n$-cardinals}

Although Jensen in his monumental work provided us with higher gap morasses (\cite{devlin2})
and although they can be simplified (\cite{vellemanhigher}, \cite{morganhigher}, \cite{szalkai})
and even some attempts of generic stepping up were made (\cite{irrgangjsl}),
one can safely claim that what we have at the moment is unsatisfactory, especially in the context of
basic questions concerning stepping up and gaps like whether it is consistent that there is a superatomic algebra
of  countable width and height $\omega_3$ (or higher) or whether it is consistent that there  is a Banach space
of density $\omega_3$ (or higher) without uncountable biorthogonal systems. 
On the other hand the level of complication of higher gap morasses in the context of  
the lack of spectacular  applications makes them remote  for most set-theorists.

One possible approach to $n$-cardinals as a structure where (oversimplifying) higher gap morass structure is
replaced by $\in$ and $\subseteq$ as in the case of $2$-cardinals could be to see  a $2$-cardinal as a 
pair of families of sets $\kappa^+=\{\alpha:\alpha<\kappa^+\}\subseteq \wp_{\kappa^+}(\kappa^+)$
 and a $(\kappa,\kappa^+)$-cardinal
$\mu\subseteq \wp_\kappa(\kappa^+)$. With this in mind one can define, say a $3$-cardinal as 
two (really three, together with $\kappa^{++}$) families:
\begin{itemize}
\item $\mu_1\subseteq \wp_{\kappa^+}(\kappa^{++})$ 
\item $\mu_2\subseteq \wp_{\kappa}(\kappa^{++})$
\end{itemize}
such that $\mu_1$ is a $(\kappa^+,\kappa^{++})$-cardinal,
such that $\mu_2$ is a $(\kappa,\kappa^{++})$-semicardinal (i.e. a neat 
$(\kappa,\kappa^{++})$-semimorass of \cite{semi}) and moreover 
$\mu_1$ and $\mu_2$ are bound by the following coherence condition
which steps up our coherence lemma \ref{coherence}:

Whenever $\alpha, \beta\in \kappa^{++}$, $X\in \mu_1$ of minimal
rank containing $\alpha,\beta$ and  $A, B\in \mu_2$ of the same rank containing $\alpha,\beta$,
then 
$$A\cap X\cap  \min\{\alpha,\beta\}=B\cap X\cap  \min\{\alpha,\beta\}.$$

Using forcing with side conditions one can prove the consistency of the existence of such objects,
however their usefulness is unclear. Also its relation to higher gap morasses is unclear and 
almost certainly the above structures are less powerful. It may also be possible that
already a gap two morass is too complicated to be comprehensibly expressed in terms of 
$\in$ and $\subseteq$.

A similar approach focused on the applications of stepping-up in building
forcing notions is taken by I. Neeman in \cite{neemantwo} or by
B. Velickovic and G. Venturi \cite{venturi} where forcing side conditions 
have two types of models, those which are countable and those which have cardinality $\omega_1$.

Perhaps for dealing with problems like those mentioned at the beginning of this section having
a transparent interaction among elementary submodels of several cardinalities like in
the coherence relation mentioned above could be helpful like it was helpful 
in \cite{hajnal} or \cite{unbounded}. However some surprising limitations are
certainly awaiting, for example Shelah showed in \cite{shelahhajnal} that unlike $\omega_2$-chains
in $\omega_1^{\omega_1}$ (\cite{hajnal}) modulo finite sets there cannot be  $\omega_4$-chains
in $\omega_3^{\omega_3}$ modulo finite sets. Some limitations concerning 
superatomic Boolean algebras are also well known (see \cite{martinezquestions}).

\bibliographystyle{amsplain}

\begin{thebibliography}{20}


\bibitem{argyrosjordi} S. Argyros, J. Lopez-Abad, S.Todorcevic, \emph{A class of Banach spaces with few non-strictly singular operators}. J. Funct. Anal. 222 (2005), no. 2, 306-384.

\bibitem{antonio} A. Aviles, S.Todorcevic, \emph{Zero subspaces of polynomials on $\ell_1(\Gamma)$}.
 J. Math. Anal. Appl. 350 (2009), no. 2, 427-435.

\bibitem{baumgartneraml} J. Baumgartner, \emph{Almost-disjoint sets, the dense set problem and the partition calculus}. Ann. Math. Logic 9 (1976), no. 4, 401-439. 



\bibitem{baumgartnerhandbook} J. Baumgartner, \emph{Applications of the proper forcing axiom}. 
Handbook of set-theoretic topology, 913-959, North-Holland, Amsterdam, 1984.

\bibitem{bs} J.Baumgartner, S.Shelah, \emph{Remarks on 
Superatomic Boolean Algebras}.   Ann. Pure Appl. Logic 33 (1987), no. 2, 109-129. 

\bibitem{quadratic} J. Baumgartner, O. Spinas, \emph{Independence and consistency 
proofs in quadratic form theory}. J. Symbolic Logic 56 (1991), no. 4, 1195-1211.

\bibitem{christina} C.  Brech, P.  Koszmider, \emph{Thin-very tall compact scattered spaces which are hereditarily separable}. Trans. Amer. Math. Soc. 363 (2011), no. 1, 501-519. 

\bibitem{negrepontis} W. Comfort, S.  Negrepontis, \emph{The theory of ultrafilters}. Die Grundlehren der mathematischen Wissenschaften, Band 211. Springer-Verlag, New York-Heidelberg, 1974.

\bibitem{devlin1} K.Devlin, \emph{Order types, trees, and 
a problem of Erdos and Hajnal}. Period. Math. Hungar. 5,
(1974), pp. 153-160.

\bibitem{devlin2} K.Devlin, \emph{Aspects of Constructibility}. Lecture
Notes in Mathematics vol 354.  Springer-Verlag 1973.

\bibitem{dow}  A. Dow, \emph{An introduction to applications of elementary submodels to topology}. Topology Proc. 13 (1988), no. 1, 17-72.

\bibitem{fans} K. Eda, G. Gruenhage, P.Koszmider,
K. Tamano, S. Todorcevic, \emph{Sequential Fans in Topology}.
 Topology Appl. 67 (1995), no. 3, 189-220. 

\bibitem{isaac} I. Gorelic, \emph{The Baire category and forcing large Lindelof spaces with points $G_\delta$}.
Proc. Amer. Math. Soc. 118 (1993), no. 2, 603-607. 

\bibitem{hajnaljuhasz1} A.Hajnal, I.Juhasz, \emph{On Spaces in Which Every
Small Subspace is Metrizable}. Bull. Polon. Acad. Sci. Ser. Mat.
Astronom. Phys. 24, 1976, pp. 727-731.

\bibitem{hajnaljuhasz2} A.Hajnal, I. Juhasz, \emph{A consistency result
concerning heredetarily $\alpha$-Lindelof spaces}.
Acta Math. Acad. Sci. Hungar. 24 (1973), pp. 307-312.\par

\bibitem{irrgang} B. Irrgang, \emph{Morasses and finite support iterations}. Proc. Amer. Math. Soc. 137 (2009), no. 3, 1103-1113.

\bibitem{irrgangjsl} B. Irrgang, \emph{Forcings constructed along morasses}. J. Symbolic Logic 76 (2011), no. 4, 
1097-1125.


\bibitem{jech} T. Jech, \emph{Set Theory}. The third millennium edition, revised and expanded. 
Springer Monographs in Mathematics. Springer-Verlag, Berlin, 2003.

\bibitem{jensen} R. Jensen, \emph{The fine structure of 
the Constructible Universe}. AML 4, (1972), pp 229-308.\par



\bibitem{jensensz} R. Jensen, K. Schlechta, \emph{Results on 
the Generic Kurepa Hypothesis}. Arch. Math. Logic, 1990,
30, pp. 13-27.



\bibitem{init} I. Juhasz, P. Koszmider, L. Soukup, \emph{A first countable, initially $\omega_1$-compact
 but non-compact space}. Topology Appl. 156 (2009), no. 10, 1863-1879. 


\bibitem{koepke} P. Koepke, J.  Martinez, 
\emph{Superatomic Boolean algebras constructed from morasses}. 
J. Symbolic Logic 60 (1995), no. 3, 940 - 951. 





\bibitem{komjath} P. Komjath, \emph{Morasses and the Levy Collapse}.
Journal Symb. Logic, Vol 52, No 1, 1987, pp. 111-115.


\bibitem{semi} P. Koszmider, \emph{Semimorasses and 
Nonreflection at Singular Cardinals}.  Ann. Pure Appl. Logic 72 (1995), no. 1, 1-23.
Appl. Logic.\par

\bibitem{strong} P. Koszmider, \emph{On the existence of strong chains in $\wp(\omega_1)/Fin$}.
 J. Symbolic Logic 63 (1998), no. 3, 1055-1062.

\bibitem{formin} P. Koszmider, \emph{Forcing Minimal Extensions
of Boolean Algebras}.  Trans. Amer. Math. Soc. 351 (1999), no. 8, 3073-3117.\par

\bibitem{hajnal} P. Koszmider, \emph{On strong chains of uncountable functions}.
 Israel J. Math. 118 (2000), 289-315. 

\bibitem{unbounded} P. Koszmider, \emph{Universal matrices and strongly unbounded functions}. Math. Res. Lett. 9 (2002), no. 4, 549-566.

\bibitem{big} P. Koszmider, \emph{A space $C(K)$ where all nontrivial complemented subspaces have big densities}. Studia Math. 168 (2005), no. 2, 109-127.

\bibitem{kur} P. Koszmider, \emph{Kurepa trees and topological non-reflection}.
 Topology Appl. 151 (2005), no. 1-3, 77-98.

\bibitem{banach} P. Koszmider, \emph{Projections in weakly compactly generated Banach spaces and Chang's conjecture}. J. Appl. Anal. 11 (2005), no. 2, 187-205.

\bibitem{grande} P. Koszmider, \emph{On large indecomposable Banach spaces}. J. Funct. Anal. 264 (2013), no. 8, 1779-1805. 

\bibitem{kunen} K. Kunen, \emph{Set Theory. An introduction to independence proofs}. 
Studies in Logic and the Foundations of Mathematics, 102, 
North Holland, Amsterdam, 1980.



\bibitem{martinezquestions} J. Martinez, \emph{Some open questions for superatomic Boolean algebras}. 
Notre Dame J. Formal Logic 46 (2005), no. 3, 353-356.

\bibitem{soukupunb} J.  Martinez, L. Soukup, \emph{Superatomic Boolean algebras constructed from strongly unbounded functions}. MLQ Math. Log. Q. 57 (2011), no. 5, 456-469.

\bibitem{morganforcing} C. Morgan, \emph{Morasses, Square and Forcing Axioms}.
  Ann. Pure Appl. Logic 80 (1996), no. 2, 139-163. 

\bibitem{morganhigher} C. Morgan, \emph{Higher gap morasses. IA. Gap-two morasses and condensation}.
 J. Symbolic Logic 63 (1998), no. 3, 753-787.

\bibitem{morgancrm} C.  Morgan, \emph{Local connectedness and distance functions}. Set theory, 345-400, Trends Math., Birkhauser, Basel, 2006.

\bibitem{neemantwo} I. Neeman,
\emph{Forcing with sequences of models of two types}. To appear in the Notre Dame Journal of Formal Logic.

 \bibitem{rabus} M. Rabus,  \emph{An $\omega_2$-minimal Boolean algebra}. 
Trans. Amer. Math. Soc. 348 (1996), no. 8, 3235-3244.

\bibitem{roitman} J. Roitman, \emph{Superatomic Boolean algebras}. Handbook of Boolean algebras, Vol. 3, 719-740, North-Holland, Amsterdam, 1989.

\bibitem{shelah1} S. Shelah, \emph{On some problems in Topology}.
Preprint.

\bibitem{shelahstanley} S. Shelah, L. Stanley, \emph{
$S$-forcing. I. A "black-box'' theorem for morasses, with applications to super-Souslin trees}.
Israel J. Math. 43 (1982), no. 3, 185-224. 

\bibitem{sspgrups} S. Shelah, J. Steprans, \emph{Extraspecial p-groups}. 
Ann. Pure Appl. Logic 34 (1987), no. 1, 87-97.

\bibitem{ssfew} S. Shelah, J. Steprans, \emph{A Banach space on which there are few operators}. Proc. Amer. Math. Soc. 104 (1) (1988) 101-105. 



\bibitem{shelahhajnal} S. Shelah, 
\emph{On long increasing chains modulo flat ideals}. 
MLQ Math. Log. Q. 56 (2010), no. 4, 397-399. 



\bibitem{soukupwide}
  L. Soukup, \emph{Wide scattered spaces and morasses}. Topology Appl. 158 (2011), no. 5, 697-707. 

\bibitem{szalkai}
I.  Szalkai, \emph{An inductive definition of higher gap simplified morasses}. Publ. Math. Debrecen 58 (2001), no. 4, 605-634.

\bibitem{stevoside} S. Todorcevic, \emph{Directed sets and cofinal types}. 
Trans. Amer. Math. Soc. 290 (1985), no. 2, 711-723.

\bibitem{stevoacta} S.Todorcevic, \emph{Partitioning pairs of
countable ordinals}. Acta Mathematica, 159 (1987) pp. 261-294.



\bibitem{stevorado} S.Todorcevic, \emph{Conjectures of Rado and Chang and
Cardinal Arithmetic}.  Finite and infinite combinatorics in sets and logic (Banff, AB, 1991), 385-398, NATO Adv. Sci. Inst. Ser. C Math. Phys. Sci., 411, Kluwer Acad. Publ., Dordrecht, 1993.

\bibitem{stevomach} S. Todorcevic, \emph{Remarks on Martin's Axiom 
and Continuum Hypothesis}. Canadian J. Math, 43, (1991), pp.
832-851.

\bibitem{stevoirr} S. Todorcevic, 
\emph{Irredundant sets in Boolean algebras}.
Trans. Amer. Math. Soc. 339 (1993), no. 1, 35-44. 

\bibitem{stevowalks} S. Todorcevic,  \emph{Walks on ordinals and their characteristics}. Progress in
 Mathematics, 263. Birkhauser Verlag, Basel, 2007.

\bibitem{stevocoherent} S. Todorcevic, \emph{Coherent sequences}. Handbook of set theory. Vols. 1, 2, 3, 215-296, Springer, Dordrecht, 2010.

\bibitem{boban} B. Velickovic,  \emph{Forcing axioms and Stationary
Sets}. Advances in mathematics; vol 94, No 2. 1992.

\bibitem{venturi} B. Velickovic, G. Venturi, \emph{Proper forcing remastered}. in
Appalachian Set Theory 2006-2012,
LMS Lecture Notes 406,  331 - 362.

\bibitem{vellemandiam} D.Velleman, \emph{Morasses, diamond and forcing}.
AML 23 (1983), pp. 199-281.



\bibitem{vellemanjsl} D.Velleman, \emph{Simplified Morasses}. JSL 49 No.1, 1984,
pp. 257-271.
\bibitem{vellemanlinear} D.Velleman, \emph{Simplified Morasses with Linear Limits}.
 JSL 49 (1984), pp. 1001-1021.

\bibitem{vellemanzfc} D. Velleman, \emph{$\omega$-morasses, and a
 weak form of Martin's axiom provable in ZFC}. Trans. Amer. Math. Soc. 285 (1984), no. 2, 617-627.




\bibitem{vellemanhigher} D. Velleman, \emph{Simplified gap-$2$ morasses}.
 Ann. Pure Appl. Logic 34 (1987), no. 2, 171-208. 

\bibitem{wark} H. Wark, \emph{A non-separable reflexive Banach space on which there are few operators}.
 J. London Math. Soc. (2) 64 (3) (2001) 675-689.

\bibitem{zwicker}  W. Zwicker, William S. $P_\kappa\lambda$ combinatorics. I. 
Stationary coding sets rationalize the club filter. 
Axiomatic set theory (Boulder, Colo., 1983), 243-259, Contemp. Math., 31, Amer. Math. Soc., Providence, RI, 1984. 

\end{thebibliography}

\end{document}